\newtheorem{trm}{Theorem}[section]
\newtheorem{prop}[trm]{Proposition}
\newtheorem{lm}[trm]{Lemma}
\newtheorem{cor}[trm]{Corollary}
\theoremstyle{remark}
\newtheorem{rmk}{Remark}[section]
\newtheorem{rmk-pl}[rmk]{Remarks}
\newtheorem{ex-sg}[ex]{Example}
\theoremstyle{definition}
\newtheorem{dfn}{Definition}[section]
\def\R{\mathbb{R}}
\def\Li{\text{Li}}
\newcommand{\intn}[2]{\ensuremath{[\![ \, #1 \,;\, #2 \,]\!]}}
\newcommand{\intnn}[2]{\intn{#1}{#2}}
\def\gcd{\text{gcd}}
\def\1{\mathbb{1}}
\def\W{\overline{W}}
\def\Q{\mathcal{Q}}
\def\A{\mathcal{A}}
\def\N{\mathbb{N}}
\def\Z{\mathbb{Z}}
\def\Pr{\mathbb{P}}
\def\P{\mathcal{P}}
\def\F{\mathbb{F}}
\def\mod{\mathrm{\mbox{ }mod\mbox{ }}}
\def\Vol{\mathrm{Vol}}
\def\gcd{\mathrm{gcd}}
\newcommand{\abs}[1]{\left\lvert #1 \right\rvert}
\newcommand{\Zp}[2]{(\Z/p^{#1}\Z)^{#2}}
\newcommand{\nor}[1]{\left\lVert #1 \right\rVert}
\DeclareMathOperator{\E}{\mathbb{E}}
\DeclareMathOperator{\grad}{\overrightarrow{\mathrm{grad}}}
\title{Asymptotics for some polynomial patterns in the primes}
\author{Pierre-Yves Bienvenu}
\address{School of Mathematics, University of Bristol, Bristol BS8 1TW, United Kingdom}
\email{pb14917@bristol.ac.uk}
\date{\today}
\begin{document}
\maketitle
\begin{abstract}
We prove asymptotic formulae for
sums of the form
$$
\sum_{n\in\Z^d\cap K}\prod_{i=1}^tF_i(\psi_i(n)),
$$
where $K$ is a convex body, each $F_i$ is either the von Mangoldt function or the representation
function of a quadratic form, and $\Psi=(\psi_1,\ldots,\psi_t)$ is a system of linear forms of finite complexity. 
When all the functions $F_i$ are equal to the von Mangoldt function, we recover a result of Green and Tao, while when they are all representation functions of quadratic forms, we recover a result of Matthiesen. Our formulae imply asymptotics for some polynomial patterns in the primes.
Specifically, they describe the asymptotic behaviour of the number of $k$-term arithmetic progressions of primes whose common difference is a sum of two squares. 

The article combines ingredients from the work of Green and Tao on linear equations in primes and that of Matthiesen on linear correlations amongst integers represented by a quadratic form. To make the von Mangoldt function compatible with the representation function of a quadratic form, we provide a new pseudorandom majorant for both -- an average of the known majorants for each of the functions -- and prove that it has the required pseudorandomness properties.
\end{abstract}
\section{Introduction}
In a celebrated article \cite{GT1}, Green and Tao proved the following.
\begin{trm}
\label{GT1}
The set $\P$ of primes contains arbitrarily long arithmetic progressions.
\end{trm}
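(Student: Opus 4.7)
The main difficulty is that the primes have zero density in $\N$, so Szemerédi's theorem on arithmetic progressions cannot be applied directly. My plan is to establish a \emph{relative} Szemerédi theorem: if $f\colon \intn{1}{N} \to \R_{\geq 0}$ is pointwise bounded by a sufficiently pseudorandom measure $\nu$ and satisfies $\E_{n\leq N} f(n) \geq \delta > 0$, then $f$ contains many $k$-term arithmetic progressions, with a quantitative lower bound depending only on $\delta$ and $k$. Applied to a suitable renormalisation of the von Mangoldt function $\Lambda$ supported on primes in an interval of length $N$, this produces $k$-APs inside $\P$ for every $k$, and letting $k \to \infty$ yields the theorem.

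The first task is to build a majorant $\nu$ controlling $\Lambda$. Following Goldston--Y{\i}ld{\i}r{\i}m, I would take a truncated divisor sum essentially of the form
\[
\nu(n) = C \log R \left( \sum_{d \mid n,\, d \leq R} \mu(d) \chi\bigl(\tfrac{\log d}{\log R}\bigr) \right)^2
\]
for some smooth cutoff $\chi$ and $R = N^{\eta}$ with $\eta$ small. One then checks that $\Lambda(n) \ll \nu(n)$ on primes $n > R$, and that $\nu$ satisfies the two quantitative pseudorandomness properties required by the transference step: a \emph{linear forms condition}, which controls averages $\E_{n} \prod_i \nu(\psi_i(n))$ for finite-complexity systems of affine-linear forms, and a \emph{correlation condition} bounding certain shifted $L^q$-norms of $\nu$. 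Both estimates reduce, via Goldston--Y{\i}ld{\i}r{\i}m-type contour integration, to a careful analysis of singular series at small primes.

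With the majorant in hand, the second step is the transference principle. The strategy is to decompose the normalised von Mangoldt function as $f_U + f_{U^\perp}$, where $f_U$ is uniformly bounded (so that Szemerédi's theorem in its functional form yields many $k$-APs) and $f_{U^\perp}$ has small Gowers $U^{k-1}[N]$-norm. The pseudorandomness of $\nu$ enters through a generalised von Neumann inequality which guarantees that the multilinear average counting $k$-APs is controlled by the $U^{k-1}$-norm modulo admissible errors, so that the $f_{U^\perp}$-contribution is negligible and the main term comes from $f_U$.

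The principal obstacle is the verification of the linear forms and correlation conditions for the divisor-sum majorant $\nu$. This is where the analytic number theory does the real work: one has to estimate, uniformly over all affine systems of bounded complexity, rather intricate singular series, and carefully track the local factors when distinct linear forms share small prime divisors. The smooth cutoff $\chi$ must be exploited in the contour integral to extract enough saving, and maintaining uniformity in the complexity of $\Psi$ is the delicate point.
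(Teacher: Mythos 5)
This theorem is quoted from \cite{GT1} rather than proved in the present paper, and your outline is precisely the Green--Tao argument from that source (and the template for the paper's own method): a Goldston--Y{\i}ld{\i}r{\i}m truncated divisor sum majorant verified to satisfy the linear forms and correlation conditions, followed by transference of Szemer\'edi's theorem via the bounded/Gowers-uniform decomposition and the generalised von Neumann inequality. The one ingredient your ``suitable renormalisation'' must be understood to include is the $W$-trick, i.e.\ passing to $n\mapsto \frac{\phi(W)}{W}\Lambda(Wn+b)$ with $W$ a product of the primes up to a slowly growing $w(N)$ and $(b,W)=1$: without it the local factors at small primes are not $1+o(1)$ and the linear forms condition for $\nu$ fails.
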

This result has been strengthened and generalised in a number of 
ways over the last decade.
One crucial such strengthening was achieved by Green and Tao 
\cite{GT2}, when they proved the existence of prime solutions  to a large class of systems of linear equations. Moreover, they provided asymptotics for the number of such solutions. 
Among other things, they proved an asymptotic for the number of $k$-term arithmetic progressions
in the primes up to $N$.
Their result was conditional on two
conjectures that Green, Tao and Ziegler later proved completely
\cite{MN,GI}.
%

Applying the same general method as Green and Tao, Matthiesen obtained
similar results with the divisor function \cite{Matt2} or
the representation function of quadratic forms \cite{Matt} instead of the von Mangoldt function.

%

In this paper, we state
and prove a theorem (Theorem \ref{mytrm}) which encompasses both Green and Tao's and Matthiesen's results. It implies asymptotics for some -- admittedly very specific -- polynomial patterns. For instance, Corollary \ref{cardinality3aps} gives 
the asymptotic for the
number of $k$-term arithmetic progressions of primes whose 
common difference is a sum of two squares, where each such
progression is counted as many times as the common difference
is represented a sum of two squares. 

Since the first submission of 
our paper, Tao and Ziegler \cite{TZ} obtained asymptotics
for a much larger class of polynomial progressions, improving upon 
their earlier work \cite{TaoZieg} that gave only lower bounds. 
The methods they develop are vastly more intricate than ours, but there exist a number of interesting equations involving primes and sums of two squares which belong to the scope of our theorem but not to theirs.

\textbf{Acknowledgements}. The author thanks his supervisor Julia Wolf 
for useful conversations, guidance and reading many drafts, Sean Prendiville for suggesting  this problem, and Sam Chow and Andy Corbett for useful remarks.
\subsection{Preliminaries}
\label{sec:prelim}
We require a few definitions and some notation to be able to state
our theorem. The von Mangoldt function $\Lambda$ is
defined on $\N$ by setting $\Lambda(n)=\log p$ if $n$ is a power of 
a prime $p$ and $\Lambda(n)=0$ otherwise. We define for any integer $q$
the \emph{local von Mangoldt function} on $\Z$ by
$$\Lambda_q(n)=\frac{q}{\phi(q)}1_{(n,q)=1},$$
where $\phi$ is the Euler totient function defined by
$\phi(q)=\abs{(\Z/q\Z)^*}$ and $(n,q)$ is the greatest common divisor (gcd) of $n$ and $q$. In fact, $\Lambda_q$ can 
naturally be defined on
$\Z/kq\Z$ for any $k\in\N$.
Observe the use of the symbol
$1_P$ for a proposition $P$, which means 1 if $P$ is true and 0
otherwise.

Let $d,t\geq 1$ be integers.
An \emph{affine-linear} form $\psi$ on $\Z^d$ is a polynomial in $d$ variables of degree at most 1 with integer coefficients.
We denote by $\dot{\psi}$ its linear part; then $\psi=\dot{\psi}+\psi(0)$.

If $\psi_1,\ldots,\psi_{t}$ are affine-linear forms, we say that
$\Psi=(\psi_1,\ldots,\psi_{t}):\Z^d\rightarrow \Z^t$ is
a system of affine-linear forms.
It has \emph{finite complexity} if no two of the forms are affinely dependent, i.e. for any $i\neq j$, the
linear parts $\dot{\psi_i}$ and $\dot{\psi_j}$ are not proportional.

A \emph{binary quadratic form} is a polynomial
$$f(x,y)=ax^2+bxy+cy^2$$
where $a,b$ and $c$ are integers. Its \emph{discriminant} is $D=b^2-4ac$.
A \textit{positive definite} binary quadratic form (abbreviated as PDBQF) is a binary quadratic form of negative discriminant.
The \emph{representation function} of $f$ is the arithmetic function defined by
$$R_f(n)=\abs{\{(x,y)\in\Z^2\mid f(x,y)=n\}}.$$
For any integers $q$ and $\beta$, we let
$$\rho_{f,\beta}(q)=\abs{\{(x,y)\in [q]^2\mid f(x,y)\equiv\beta\mod q\}}.
$$

We shall use the notation
$$\E_{a\in A}=\frac{1}{\abs{A}}\sum_{a\in A}$$
to denote the averaging operator. We may also write
$\Pr_{a\in X}(a\in A)$ for $\abs{A}/\abs{X}$, for finite sets
$A\subset X$. The letter $p$ is reserved
for primes, the set of which is denoted by $\P$; for instance $\prod_p$ implicitly means $\prod_{p\in\P}$.

The asymptotic parameter going to infinity is denoted by $N$. 
We use the symbols $X\sim Y$ to say that $X/Y$
tends to 1 as $N$ tends to infinity. We shall use $X=O(Y)$
to say that $X/Y$ is bounded and $X=o(Y)$ to say that $X/Y$ 
tends to 0. Both $O$ and $o$ can be complemented with a subscript
indicating the dependence of the implied constant or the
implied decaying function. We also use $X\ll Y$,
which is synonymous to $X=O(Y)$ and can be complemented by subscripts as well.

\subsection{The main theorem}
We are now ready to state our main theorem.
\begin{trm}
Let $\Psi=(\psi_1,\ldots,\psi_{t+s}):\Z^d\rightarrow \Z^{t+s}$ be a system of affine-linear forms of finite complexity. 
Suppose that the coefficients of the
linear part $\dot{\Psi}$ are bounded\footnote{Green and Tao \cite{GT2} introduced the notion
of \emph{size at scale} $N$. One can check that the condition that the system has bounded size at scale $N$ is
equivalent to the boundedness of the linear part together with the condition on the image of $K$.} by some constant $L$.
Let $K\subset [-N,N]^d$ be a convex body such that $\Psi(K)\subset [0,N]^{t+s}$. Let $f_{t+1},
\ldots, f_{t+s}$ be PDBQFs of discriminants $D_j<0$ for $j=t+1,\ldots,t+s$.
Then
\begin{equation*} 
\sum_{n\in\Z^d\cap K}\prod_{i=1}^{t}\Lambda(\psi_i(n))\prod_{j=t+1}^{t+s}
R_{f_j}(\psi_j(n))=\beta_{\infty}\prod_{p}\beta_p+o(N^d),
\end{equation*}
where
$$\beta_{\infty}=\Vol(K)\prod_{j=t+1}^{t+s}\frac{2\pi}{\sqrt{-D_j}}$$
and
$$\beta_p=\lim_{m\rightarrow +\infty}\E_{a\in (\Z/p^m\Z)^d}\prod_{i=1}^t\Lambda_p(\psi_i(a))\prod_{j=t+1}^{t+s}
\frac{\rho_{f_j,\psi_j(a)}(p^m)}{p^m}.$$
\label{mytrm}
\end{trm}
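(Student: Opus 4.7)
The plan is to follow the Green--Tao transference strategy, with the principal novelty being the joint treatment of $\Lambda$ and the representation functions $R_{f_j}$. I begin with a $W$-trick: fix a slowly growing parameter $w = w(N)$, let $W = \prod_{p \le w} p$, and split each variable $n_k$ into residue classes modulo $W$. This replaces $\Lambda$ on each invertible class by something essentially constant (of size $W/\phi(W)$), and it localises each $R_{f_j}$ to classes whose local density $\rho_{f_j,\cdot}(W)/W$ is known. After a corresponding rescaling of the convex body, each of the resulting normalised functions is decomposed as a ``structured'' piece $\Lambda_i^\sharp$ or $R_{f_j}^\sharp$ (obtained by averaging the local data over a scale $W^m$ in the spirit of Matthiesen's construction) plus a ``Gowers-uniform'' error $\Lambda_i^\flat$ or $R_{f_j}^\flat$.

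The main term arises by substituting the structured parts everywhere. For each prime $p \le w$ the average of the product of these pieces over residues mod $p^m$ reconstructs exactly the local factor $\beta_p$ as defined in the statement, while the outer sum over $n \in K \cap \Z^d$, combined with the archimedean density $2\pi/\sqrt{-D_j}$ coming from the count of representations of a real number by each $f_j$, becomes a Riemann sum converging to $\Vol(K)\prod_j 2\pi/\sqrt{-D_j} = \beta_\infty$. Taking $w \to \infty$ with $N$, and using absolute convergence of $\prod_p \beta_p$ (a standard consequence of the finite-complexity hypothesis), yields the predicted main term $\beta_\infty \prod_p \beta_p$.

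The error terms are precisely the mixed correlations in which at least one factor has been replaced by its Gowers-uniform part. I will bound these by the generalised von Neumann theorem of Green and Tao: under finite complexity, such a correlation is $o(N^d)$ provided each factor is dominated pointwise by a common pseudorandom majorant $\nu$ satisfying the linear forms and correlation conditions at complexity determined by $\Psi$, and provided one factor has small $U^{s+1}$-norm for $s$ large enough. The requisite uniformity for $\Lambda^\flat$ follows from the Green--Tao--Ziegler inverse theorem combined with the Mobius and nilsequences results, while that of $R_{f_j}^\flat$ is supplied by Matthiesen's work.

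The main obstacle, and where the paper's central technical contribution lies, is the construction of the common majorant $\nu$. Neither the Green--Tao truncated divisor-sum majorant for $\Lambda$ nor Matthiesen's majorant tailored to $R_{f_j}$ is suitable on its own. The plan is to take $\nu$ to be an average (over $i$ and $j$) of the two known majorants, rescaled so that $\nu \gg \phi(W)/W \cdot \Lambda(W\cdot+b)$ on each relevant residue class and $\nu \gg R_{f_j}$ on the support of each $R_{f_j}$; then the hard work is to verify the linear forms and correlation conditions for this averaged object. This reduces, by expanding the average and applying Cauchy--Schwarz, to checking mixed linear forms conditions where some factors come from the prime majorant and others from the quadratic-form majorant, and the technical heart of the proof is to handle these mixed terms uniformly in the linear forms $\psi_i$. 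Once the pseudorandomness of $\nu$ is established, the generalised von Neumann theorem closes the argument and combines with the main term computation above.
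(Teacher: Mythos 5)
Your overall architecture (W-trick, decomposition into a structured main term plus Gowers-uniform errors, a common pseudorandom majorant obtained by averaging the Green--Tao and Matthiesen majorants, and the generalised von Neumann theorem) matches the paper's strategy. But there is a genuine gap in your treatment of the $W$-trick, and it is precisely the point the paper flags as delicate. To make $R_{f_j}$ compatible with Matthiesen's majorant one cannot work modulo the squarefree $W=\prod_{p\le w}p$: one must work modulo $\overline{W}=\prod_{p\le w}p^{\iota(p)}$ with $p^{\iota(p)}\approx\log^{C_1+1}N$, so that residues $b\equiv 0\bmod p^{\iota(p)}$ can be discarded and the local densities $\rho_{f,b}(\overline{W})/\overline{W}$ become lift-invariant; without these prime powers the normalised function $r'_{f,b}$ is not dominated by the pseudorandom majorant. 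The price is that $\overline{W}\gg_A\log^A N$ for every $A$, which puts it beyond the reach of Siegel--Walfisz. Consequently one \emph{cannot} assert that $\Lambda'_{b,\overline{W}}$ has mean $1+o(1)$, nor that its ``uniform part'' $\Lambda'_{b,\overline{W}}-1$ has small Gowers norm — the inverse theorem and M\"obius–nilsequences machinery do not rescue this, because the failure is already at the level of equidistribution in progressions of modulus $\overline{W}$. Your proposal decomposes $\Lambda$ into $\Lambda^\sharp+\Lambda^\flat$ and invokes uniformity of $\Lambda^\flat$; at the modulus forced by the quadratic forms, that step fails.

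The paper circumvents this by an asymmetric decomposition: it telescopes only over the representation functions, writing the product as $\prod_i\Lambda'_{c_i(a),\overline{W}}(\tilde\psi_i(n))$ plus a sum of terms each containing a factor $r'_{f_j,c_j(a)}(\tilde\psi_j(n))-1$. The error terms then require uniformity only of $r'_{f,b}-1$ (supplied by Matthiesen), while the pure von Mangoldt main term is handled by descending from $\overline{W}$ to a much smaller modulus $\widetilde{W}=\prod_{p\le w}p^{\eta(p)}$ with $p^{\eta(p)}\approx\log\log N$ (using the lift-invariance of the local data to justify the descent on the set $X_1$ of good residues, and a Goldston--Y\i ld\i r\i m upper bound on the complementary set $X_2$), at which point Green and Tao's Theorem 5.1 applies directly. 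You would need to incorporate this two-scale argument, or some substitute for it, for your proof to go through; as written, the main-term computation and the claimed uniformity of $\Lambda^\flat$ both rest on equidistribution statements that are false (or at least unproven) at scale $\overline{W}$.
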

The error term is not effective (see \cite[Sections 13 and 14]{GT2} for a discussion) and the implied decaying 
function depends on $d,t,s,L$ and the discriminants.

Two important special cases arise when $s=0$ or $t=0$, that is,
when the functions featuring are either all equal to the von Mangoldt function, or all representation functions. Then one of the products is trivial.
\begin{itemize}
\item When $s=0$, one immediately recovers the result of Green and Tao \cite[Main Theorem]{GT2}.
Indeed, for $m\geq 1$, we have
$$
\E_{a\in (\Z/p^m\Z)^d}\prod_{i=1}^t\Lambda_p(\psi_i(a))=
\E_{a\in (\Z/p\Z)^d}\prod_{i=1}^t\Lambda_p(\psi_i(a))
$$
so that 
$$\beta_p=\E_{a\in (\Z/p\Z)^d}\prod_{i=1}^t\Lambda_p(\psi_i(a)).
$$
\item When $t=0$, Theorem \ref{mytrm} boils down to
the formula of Matthiesen \cite[Theorem 1.1]{Matt}.
\end{itemize}
For each prime $p$, we call $\beta_p$ the \textit{local factor} modulo $p$.
The existence of the limit
as $m$ tends to infinity that defines it is proven
in Proposition \ref{prop:betapm}; the convergence of the infinite
product $\prod_p \beta_p$ is a consequence of \ref{lm:locfac2}.

Sometimes one can get an asymptotic even when the system has infinite complexity, but the asymptotic takes a completely different form then. For instance it is easy to see that
$$
\sum_{n\leq N}\Lambda (n) R(n) \sim 8\sum_{\substack{p\leq N\\p\equiv 1\mod 4}}\log p\sim 4N
$$
by Fermat's theorem on sums of two squares and the prime number theorem in arithmetic progressions.
We do not address such systems in this paper.

\subsection{Progressions of step a sum of two squares in the primes}
Here $R$ and $\rho$ (see Section \ref{sec:prelim}) will implicitly refer to the form
$f(x,y)=x^2+y^2$
whose discriminant is $-4$.
Our application concerns arithmetic progressions in the primes whose common difference is required to be a sum of two squares.
It shows that the Green-Tao theorem (case $s=0$ of Theorem \ref{mytrm}) holds not only
for linear systems, but also for some -- admittedly very specific --
polynomial systems.
\begin{cor}
\label{cardinality3aps}
Let $k\geq 1$ be an integer and $$L=\{(a,b,c)\in\R^3\mid 1\leq a\leq a +(k-1)(b^2+c^2)\leq N\}.$$
Let $\Psi=(\psi_0,\cdots,\psi_{k-1})\in\Z[a,b,c]^k$ be the polynomial system defined by
$$
\psi_i(a,b,c)=a+i(b^2+c^2).
$$
Then
\begin{equation}
\label{polysyst}
\sum_{n\in \Z^3\cap L}\prod_{i=0}^{k-1}\Lambda(\psi_i(n))
=\beta_{\infty}\prod_p\beta_p+o(N^2)
\end{equation}
with $\beta_{\infty}=\Vol(L)$ and
\begin{equation}
\label{localfacAP}
\beta_p=\E_{n\in\Zp{}{3}}\prod_{i=0}^{k-1}\Lambda_p(\psi_i(n)).
\end{equation}
\end{cor}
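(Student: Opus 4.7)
The plan is to reduce Corollary \ref{cardinality3aps} to Theorem \ref{mytrm} via the change of variables $m=b^2+c^2$. First, I would observe that the summand $\prod_i\Lambda(a+i(b^2+c^2))$ depends on $(b,c)$ only through $m=b^2+c^2$, and that the inequalities defining $L$ likewise involve only $a$ and $m$. Grouping triples $(a,b,c)$ according to the value of $b^2+c^2$ then yields
\begin{equation*}
\sum_{n\in\Z^3\cap L}\prod_{i=0}^{k-1}\Lambda(\psi_i(n))
=\sum_{(a,m)\in\Z^2\cap K}R(m)\prod_{i=0}^{k-1}\Lambda(a+im),
\end{equation*}
where $K=\{(a,m)\in\R^2\mid 1\leq a,\ a+(k-1)m\leq N,\ m\geq 0\}$ is a two-dimensional convex body and $R=R_f$ for $f(x,y)=x^2+y^2$, whose discriminant is $-4$.

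Next, I would apply Theorem \ref{mytrm} to the right-hand side with $d=2$, $t=k$, $s=1$, the linear forms $\psi'_i(a,m)=a+im$ for $0\leq i\leq k-1$ together with $\psi'_k(a,m)=m$, and the PDBQF $f$. The linear parts $(1,0),(1,1),\ldots,(1,k-1),(0,1)$ are pairwise non-proportional, so the system has finite complexity; the coefficients are bounded by $k-1$; the body satisfies $K\subset[0,N]^2$; and $\Psi'(K)\subset[0,N]^{k+1}$. The error term $o(N^2)$ produced by the theorem matches the statement of the corollary.

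It then remains to identify the archimedean and local constants given by Theorem \ref{mytrm} with those in \eqref{polysyst}--\eqref{localfacAP}. For the archimedean one, the theorem yields $\Vol(K)\cdot 2\pi/\sqrt{4}=\pi\Vol(K)$; passing to polar coordinates $(b,c)=(r\cos\theta,r\sin\theta)$ and substituting $m=r^2$ in the definition of $L$ gives $\Vol(L)=\pi\Vol(K)$. For the local factors, the key observation is that $\Lambda_p$ depends on its argument only modulo $p$: setting $g(\bar m):=\E_{a\in\Z/p\Z}\prod_i\Lambda_p(a+i\bar m)$, one checks that $\E_{a\in\Z/p^M\Z}\prod_i\Lambda_p(a+im)=g(m\bmod p)$ for every $M$. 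Applying the elementary identity $\sum_m\rho_{f,m}(p^M)h(m)=\sum_{(b,c)\in[p^M]^2}h(b^2+c^2)$ with $h=g(\,\cdot\bmod p)$ rewrites the theorem's local factor as $\E_{(b,c)\in\Zp{M}{2}}g((b^2+c^2)\bmod p)$, which depends only on $(b,c)\bmod p$ and therefore equals $\E_{(a,b,c)\in\Zp{}{3}}\prod_i\Lambda_p(a+i(b^2+c^2))$, recovering \eqref{localfacAP}.

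The proof presents no substantial obstacle beyond this bookkeeping: the real content is entirely inside Theorem \ref{mytrm}, and the corollary essentially amounts to recognising that counting $k$-term arithmetic progressions in the primes whose common difference is a sum of two squares is, after the substitution $m=b^2+c^2$, a linear correlation problem in $(a,m)\in\Z^2$ involving the von Mangoldt function and the representation function of $x^2+y^2$.
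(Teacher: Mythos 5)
Your proposal is correct and follows essentially the same route as the paper: rewrite the sum as $\sum_{(a,d)\in\Z^2\cap K}R(d)\prod_i\Lambda(a+id)$, apply Theorem \ref{mytrm} to the finite-complexity system $(a,d)\mapsto(a,a+d,\ldots,a+(k-1)d,d)$, and then match $\beta_\infty$ by a volume computation and the local factors by unfolding $\rho_{f,d}(p^m)$ into a count over $(b,c)$ and using invariance modulo $p$. No substantive differences.
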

As noted in the introduction,
Corollary \ref{cardinality3aps} now appears as a special case of
a very recent
result of Tao and Ziegler \cite[Theorem 1.4]{TZ}. However, our method can deal with the variant where $L$ is replaced by $[N]\times [\sqrt{N}\log^{-A}N]^2$ for any constant $A>0$, thus the common difference of the progression is markedly
smaller than the terms of the progression. 
Indeed, in the proof below, we reduce equation \eqref{polysyst}
to one involving a linear system, to which we apply \cite[Theorem 1.3]{TZ}. When one considers the system as a polynomial one, one cannot restrict $b$ and $c$ to such a small range.
\begin{proof}[Proof of Corollary~\ref{cardinality3aps} assuming Theorem \ref{mytrm}]
We note that the left-hand side of equation~\eqref{polysyst} can be
written as
\begin{equation}
\label{kap}
\sum_{(a,d)\in\Z^2\cap K}\Lambda(a)\Lambda(a+d)\cdots\Lambda(a+(k-1)d)R(d),
\end{equation}
where $K=\{(a,d)\in\R^2\mid 1\leq a\leq a+(k-1)d\leq N\}$
is a convex body in $\R^2$.
Applying Theorem~\ref{mytrm} to this convex body and
the system
$(a,d)\mapsto (a,a+d,\ldots,a+(k-1)d,d)$, which is of finite
complexity, we get
\begin{equation} 
\sum_{n\in \Z^3\cap L}\prod_{i=0}^{k-1}\Lambda(\psi_i(n))=\beta_{\infty}\prod_p\beta_p+o(N^2)
\label{mainasymptoticap}
\end{equation}
with
$\beta_{\infty}=\pi \frac{N^2}{2(k-1)}$ and
$$\beta_p=\lim_{m\to\infty}\E_{(a,d)\in (\Z/p^m\Z)^2}\frac{\rho_d(p^m)}{p^m}\left(\frac{p}{\phi(p)}
\right)^k
\prod_{i=0}^{k-1}1_{(a+id,p)=1}.$$
It is easy to see that $\Vol(L)=\beta_{\infty}$.
It remains to prove that the local factors have the form \eqref{localfacAP}. First,
\begin{equation}
\label{liftinvar}
\begin{split}
\E_{(a,d)\in (\Z/p^m\Z)^2}\frac{\rho_d(p^m)}{p^m}
\prod_{i=0}^{k-1}1_{(a+id,p)=1} &=\E_{(a,b,c)\in (\Z/p^m\Z)^3}\prod_{i=0}^{k-1}1_{(a+i(b^2+c^2),p)=1}.
\end{split}
\end{equation}
Now let $a\mapsto\tilde{a}$ be the canonical map $\Z/p^m\Z\rightarrow\Z/p\Z$. We notice that it is a
$p^{m-1}$-to-1 map and that $(a+i(b^2+c^2),p)=1$
if and only if $(\tilde{a}+i(\tilde{b}^2+\tilde{c}^2),p)=1$.
Hence
$$
\E_{(a,b,c)\in (\Z/p^m\Z)^3}\prod_{i=0}^{k-1}1_{(a+i(b^2+c^2),p)=1}=
\E_{(a,b,c)\in (\Z/p\Z)^3}\prod_{i=0}^{k-1}1_{(a+i(b^2+c^2),p)=1}
$$
does not depend on $m$ and the local factors are of the
desired form.
\end{proof}
Let us compute explicitly the local factors $\beta_p$.
Suppose first that $p\geq k$.
We remark that
$$\beta_p=\left(\frac{p}{p-1}\right)^k
\frac{1}{p}\sum_{a\in\Zp{}{*}}(1-\sum_{i=1}^{k-1}\Pr_{(b,c)\in \Zp{}{2}}(b^2+c^2\equiv -\overline{i}a\mod p)),
$$
where $\overline{i}$ is the inverse of $i$ modulo $p$.
Moreover, for any $a\in\Zp{}{*}$, setting 
$e(x)=\exp(2i\pi x)$ as customary,
we have 

\begin{align*}
\abs{\{(b,c)\in \Zp{}{2}\mid b^2+c^2\equiv a \mod p\}}&=
\sum_{(b,c)\in \Zp{}{2}}\frac{1}{p}
\sum_{h\in \Z/p\Z}e\left(\frac{h(b^2+c^2-a)}{p}\right)\\
&=\frac{1}{p}\left(\sum_{h\in\Zp{}{*}}e\left(-\frac{ha}{p}\right)
\left(\sum_{b\in\Z/p\Z}e\left(\frac{hb^2}{p}\right)\right)^2+p^2\right)\\
&= \left\lbrace
\begin{tabular}{ll}
$p-1$ & \text{ if } $p\equiv 1\mod 4$\\ 
$p+1$ & \text{ if } $p\equiv -1\mod 4$\\
$p$ &\text{ if } $p=2$.
\end{tabular} \right.
\end{align*}

The last equality follows from the classical computation of
Gauss sums
(see \cite[3.38]{IK}).
For $p\geq k$, this leads to
$$
\beta_p=\left\lbrace\begin{array}{ll}
\left(1+\frac{1}{p-1}\right)^k\left(1-\frac{k}{p}+2\frac{k-1}{p^2}-\frac{k-1}{p^3}\right) & \text{ if } p\equiv 1\mod 4\\ 
\left(1+\frac{1}{p-1}\right)^k\left(1-\frac{k}{p}+\frac{k-1}{p^3}\right) & \text{ if } p\equiv -1\mod 4.\\
\end{array} \right.
$$

It is easy to compute the local factors for $p\leq k$. We find that

$$\beta_p=\left\lbrace
\begin{array}{ll}
\left(\frac{p}{p-1}\right)^k\frac{(p-1)(2p-1)}{p^3} & \text{ if } p\equiv 1\mod 4\\ 
\left(\frac{p}{p-1}\right)^k\frac{p-1}{p^3} & \text{ if } p\equiv -1\mod 4\\
2^{k-2} &\text{ if } p=2.
\end{array} \right.$$
We notice that $\beta_p$ is nonzero for every $p$ and that
$\beta_p=1+O(p^{-2})$, thus $\prod_p\beta_p$ is a nonzero convergent product. We prove in Lemma \ref{lm:locfac2} that the product of the local factors is always convergent for systems of finite
complexity.



Corollary \ref{cardinality3aps} counts the number of \emph{weighted} arithmetic progressions of primes up to $N$ whose common difference is a sum of two squares, each such arithmetic progression being weighted by the number of representation of the common difference.
To count these progressions without multiplicity, one has to replace $R$ by the indicator function
$1_S$ of the sums of two squares. The very recent work of Matthiesen on multiplicative functions
\cite{multip}
shows how to deal with $1_S$ in linear averages, so that the
count without multiplicity can be derived along the same lines as the count with multiplicity. We
refrain from doing it here for brevity, but we note that this differs from the results of Tao and Ziegler \cite{TZ},
which count necessarily multiplicities.

In general, the only polynomial patterns we are able
to deal with are the ones which can be converted
into linear patterns by the use of representation functions
of PDBQFs, as in the proof of Corollary \ref{cardinality3aps}. The ability to deal with arithmetic progressions whose common difference is a sum of two squares as if they were
a linear pattern
is reminiscent of a result of Green \cite{Gr}: he proved that if a
set $A\subset [N]$ does not contain any such progression of length 3, then $\abs{A}\ll N(\log\log N)^{-c}$ for some $c>0$.

\subsection{Tuples of primes whose pairwise midpoints are sums of
two squares}
Theorem \ref{mytrm} can yield many further asymptotics for the
number of solutions to equations in primes and sums of squares, some of which are not covered by Tao and Ziegler \cite{TZ}.
This is the case of the equation $p_1+p_2=n_1^2+n_2^2$ with $p_1,p_2$ primes and $n_1,n_2$ integers. The underlying polynomial
system is
$$
(n_1,n_2,n_3)\mapsto (n_1,n_2^2+n_3^2-n_1),
$$
which is not of the form $(n_1,n_1+P(\mathbf{r}))$, hence not a polynomial progression.
More generally,
by analogy to a theorem of Balog \cite{Balog}, we consider
tuples of odd primes $p_1,\ldots,p_d$ such that $(p_i+p_j)/2$ is
a sum of two squares for all $i\neq j$.
To determine the asymptotic for such tuples, we analyse the sum
$$
\sum_{1\leq n_1,\ldots,n_d\leq N}\prod_{i\in[d]}\Lambda(2n_i+1)
\prod_{j\neq k}R(n_j+n_k+1)
$$
where $R$ is again the representation function of sums of two squares. The system of linear forms at hand is of finite complexity,
so that Theorem \ref{mytrm} applies.

\subsection{Other results within the scope of our method}
We claim, but we do not formally prove, that our method yields
a result similar to Theorem \ref{mytrm}
with the divisor function $\tau$ instead of the
representation functions $R_{f_i}$. In fact, this result is
easier to prove, since the treatment of the
representation function of
a binary quadratic form by Matthiesen \cite{Matt} relies
on her earlier paper on the divisor function \cite{Matt2}.
\begin{trm}
\label{trm:lambdadiv}
Let $\Psi=(\psi_1,\ldots,\psi_{t+s}):\Z^d\rightarrow \Z^{t+s}$ be a system of affine-linear forms of finite complexity. 
Suppose that the coefficients of the
linear part $\dot{\Psi}$ are bounded by $L$.
Let $K\subset [-N,N]^d$ be a convex body such that $\Psi(K)\subset [0,N]^{t+s}$. Write
$\Phi=(\psi_{t+1},\ldots,\psi_{t+s})$ and $\dot{\Phi}$ for the
linear part.
Then
\begin{equation*} 
\sum_{n\in\Z^d\cap K}\prod_{i=1}^{t}\Lambda(\psi_i(n))\prod_{j=t+1}^{t+s}
\tau(\psi_j(n))=(\log N)^s\beta_{\infty}\prod_{p}\beta_p+o_{d,t,s,L}(N^d\log^sN)
\end{equation*}
where
$$\beta_{\infty}=\Vol(K)$$
and
$$\beta_p=
\left(\frac{p}{p-1}\right)^{t-s}\E_{a\in [p]^d}
\prod_{i=1}^t1_{(\psi_i(a),p)=1}\sum_{(k_1,\ldots,k_s)\in\N^s}
\alpha_{\Phi_{a,p}}(p^{k_1},\ldots,p^{k_s})$$
with $\Phi_{a,p}:b\mapsto \Phi(a)+p\dot{\Phi}(b)$  and $\alpha$ as in Definition \ref{def:locdens}.
\end{trm}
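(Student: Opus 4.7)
The plan is to follow the same overall template used for Theorem \ref{mytrm}, replacing $R_{f_j}$ with $\tau$ throughout and appealing to Matthiesen's treatment of the divisor function in \cite{Matt2} rather than her more delicate treatment of quadratic forms in \cite{Matt}. The key point is that, after normalisation by $\log N$, the divisor function $\tau/\log N$ admits a pseudorandom majorant whose construction is strictly simpler than the majorant for $R_f$, because the local densities associated with $\tau$ are multiplicative in a more straightforward way. First, I would construct an averaged pseudorandom majorant $\nu$ dominating both $C(\Lambda+1)$ and $C\tau/\log N$ simultaneously, by averaging the Green--Tao majorant and Matthiesen's divisor-function majorant exactly as in the core construction of this paper. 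The linear forms and correlation conditions for the average follow from those of each summand by the Cauchy--Schwarz argument already used for the $\Lambda$/$R$ case.

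Next, I would perform the usual $W$-trick at a slowly growing scale $w=w(N)$ with $W=\prod_{p\leq w}p$, decomposing $\Lambda$ as a sum of the $W$-tricked functions $\Lambda'_{b,W}$ over residues $b$ coprime to $W$, and decomposing $\tau$ according to the residue of $\psi_j(n)$ modulo $W$ and the $p$-adic valuations at the small primes, using Matthiesen's local decomposition of $\tau$ from \cite{Matt2}. After this W-trick, each factor splits as a constant main term (carrying the local factor at primes $p\leq w$) plus an error that is Gowers-uniform: for $\Lambda$ this is Green--Tao--Ziegler \cite{MN,GI}, and for $\tau/\log N$ this is precisely the main Gowers-norm estimate of \cite{Matt2}.

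With all constituent functions bounded in absolute value by $\nu+1$ and each equal to a bounded constant plus a Gowers-uniform error, the generalised von Neumann theorem (available since $\Psi$ has finite complexity and $\nu$ is pseudorandom with sufficiently many linear forms) reduces the sum to
\[
\Bigl(\prod_{i=1}^t c_i\Bigr)\Bigl(\prod_{j=t+1}^{t+s} c_j\Bigr)\sum_{n\in \Z^d\cap K}\prod_{i=1}^{t}1_{\psi_i(n)\equiv b_i\,(W)}\prod_{j=t+1}^{t+s}1_{\psi_j(n)\equiv b_j\,(W)}+o_{d,t,s,L}(N^d\log^sN),
\]
summed over the relevant residue tuples $(b_i)$. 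A standard lattice point count evaluates the inner sum as $\Vol(K)W^{-d}+o(N^d)$, and reassembling the constants $c_i,c_j$ recovers $(\log N)^s\Vol(K)\prod_{p\leq w}\beta_p$ with $\beta_p$ as stated: the $(p/(p-1))^{t-s}$ factor comes from the normalisations in the $W$-tricked $\Lambda$ and $\tau$, and the $\alpha_{\Phi_{a,p}}$ sum over $(k_1,\ldots,k_s)$ encodes the probability that $\psi_j$ has specified $p$-adic valuation pattern, integrated against the divisor function's local mass.

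Finally, I would need to show that the tail $\prod_{p>w}\beta_p=1+o(1)$ so that the truncated product at $w$ converges to $\prod_p\beta_p$; this is analogous to Lemma~\ref{lm:locfac2} for the representation function and follows from the explicit estimate $\beta_p=1+O(p^{-2})$, which for $\tau$ reduces to an elementary calculation of local divisor densities. The main obstacle, as in the $R_f$ case, is verifying the linear forms and correlation conditions for the averaged majorant; but because Matthiesen's divisor-function majorant is the simpler of the two summands, this step is strictly easier than the verification already carried out for Theorem~\ref{mytrm} in the body of the paper, which is why I am content to sketch rather than write out the argument in detail.
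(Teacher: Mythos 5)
The paper does not actually write out a proof of Theorem~\ref{trm:lambdadiv}; it only asserts that the method of Theorem~\ref{mytrm} carries over with $\tau$ in place of $R_f$, using \cite{Matt2} instead of \cite{Matt}. Your sketch follows that same template in outline, but it glosses over precisely the point that the body of the paper spends Sections~2.2--2.4 resolving, and as written the sketch contains a genuine gap there.

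The gap is the interaction between the scale of the $W$-trick and the Gowers uniformity of the tricked von Mangoldt function. To remove the bias of $\tau$ (exactly as for $R_f$) one must sieve out higher prime powers at small primes, i.e.\ work modulo $\overline{W}=\prod_{p\leq w}p^{\iota(p)}$ with $p^{\iota(p)}\approx\log^{C_1}N$ — this is forced both by the local factor $\sum_{(k_1,\ldots,k_s)}\alpha_{\Phi_{a,p}}(p^{k_1},\ldots,p^{k_s})$, which genuinely involves all powers of $p$, and by the fact that the exceptional set $X_0$ can only afford to exclude prime-power divisors exceeding $\log^{C_1}N$. But $\overline{W}$ is larger than any fixed power of $\log N$, so Siegel--Walfisz (hence Green--Tao--Ziegler) does \emph{not} give $\|\Lambda'_{b,\overline{W}}-1\|_{U^k}=o(1)$; the paper states this explicitly after \eqref{Wtrickedvm}. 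Your decomposition ``each factor $=$ constant $+$ Gowers-uniform error,'' followed by telescoping and von Neumann, therefore breaks down for the $\Lambda$ factors: the term in the expansion in which every $\Lambda$ contributes its ``uniform part'' and every $\tau$ contributes its main part cannot be discarded. The correct repair is the one used for Theorem~\ref{mytrm}: keep the product of tricked von Mangoldt functions intact as the main term $T_1$, evaluate it by reducing the modulus from $\overline{W}$ to the much smaller $\widetilde{W}$ via lift-invariance of the local weights and then invoking \cite[Theorem 5.1]{GT2} (with the Goldston--Y\i ld\i r\i m majorant handling the residues in $X_2$), and place the burden of uniformity entirely on the factors $\tau'_{b}-1$, whose $U^k$-decay is supplied by \cite{Matt2}. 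Two smaller points: taking $W=\prod_{p\leq w}p$ without prime powers, as in your second paragraph, is inconsistent with the rest of your own sketch for the reason above; and the correlation condition you invoke is not needed here at all, by the Conlon--Fox--Zhao argument the paper already cites.
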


This theorem provides an asymptotic for the number of triples
of
nonnegative integers $(a,b,c)$ such that $a,a+bc,a+2bc$ are primes.
This is again a quadratic pattern; in fact, $\tau$ can be viewed as the representation function of the quadratic form
$(x,y)\mapsto xy$. We can obtain a result similar to
 Corollary \ref{cardinality3aps}.
We let $$L=\{(a,b,c)\in[1,+\infty[^3\mid a+(k-1)bc\leq N\}.$$ This is
not a convex body, but we have $\Vol(L)\sim \abs{L\cap\Z^3}\sim
N^2\log N/(k-1)$. 
It is not difficult to deduce from Theorem \ref{trm:lambdadiv}
that
\begin{equation*}
\sum_{(a,b,c)\in L\cap\Z^3}\prod_{i=0}^{t-1}\Lambda(a+ibc)
=\Vol (L)\prod_p\beta_p +o(N^2\log N)
\end{equation*}
with
$$
\beta_p=\prod_{i=0}^{t-1}\Lambda_p(a+ibc).
$$
Again this result has the same shape as the Green-Tao theorem
although the configuration involved is nonlinear.

We remark that the idea of mixing $\Lambda$ and $\tau$ is quite old. Titchmarsh \cite{Tit} considered sums
such as
$$\sum_{p\leq N}\tau(p+a)$$
or equivalently
$$
\sum_{n\leq N}\Lambda(n)\tau(n+a)
$$
for $a\in\Z$.
Assuming the Riemann hypothesis, he proved that that
$$
\sum_{n\leq N}\Lambda(n)\tau(n+a)=c_1(a)x\log x + O(x\log\log x)
$$
for some explicit constant $c_1(a)$. The result was proven unconditionally by Linnik \cite{Lin}. Fouvry \cite{Fouv}
proved the refined asymptotic formula
$$
\sum_{n\leq N}\Lambda(n)\tau(n+a)=c_1(a)x\log x+C_2(a)\Li(x)+O_A(x(\log x)^{-A})
$$
for any $A>0$.
Notice that this problem does not belong to the scope of our method, because the involved linear
system is of infinite complexity.

We also mention that Matthiesen, together with Browning \cite{MattBr}, was able to generalise her result about quadratic forms
to norm forms originating from a number field. This implies a generalisation of Theorem \ref{mytrm}, but we
refrain, for the sake of simplicity, from inspecting this general case.
\subsection{Overview of the general strategy}
We now turn to a proof of the main theorem, Theorem \ref{mytrm}.
The proof follows the usual Green-Tao method.
In Section 2, we perform the $W$-trick to suppress the preference of the von Mangoldt function
and the representation function for some residue classes.
Because of the notably different behaviours of these functions
with respect to arithmetic progressions, this is a delicate matter. Assuming some convergence properties of the local factors, which we prove in Appendix A, the implementation of the $W$-trick reduces the main theorem to Theorem~\ref{reduction}, the statement that
a multilinear average
$$
\E_{n\in\Z^d\cap K}(F_0(\psi_0(n))-1)\prod_{i=1}^tF_i(\psi_i(t))
$$
is asymptotically $o(1)$.
Thanks to a \emph{generalised von Neumann theorem},
it suffices to ensure that $F_0-1$ has small
\emph{Gowers uniformity norm}
and that all the functions $F_i$ and $F_0-1$ are bounded by a common \emph{enveloping sieve} or \emph{pseudorandom
majorant}. This is where the novelty of our paper lies. 
While individual pseudorandom majorants for $\Lambda$
and for $R_f$ are known, we need to construct a common one that works for $\Lambda$ and $R_f$ simultaneously.

We state the von Neumann theorem and prove Theorem~\ref{reduction} in Section 3, assuming the majorant
introduced then is sufficiently pseudorandom.
The required pseudorandomness property is proven in Appendix B. Appendix C provides some
general background around the notion of local density, i.e. the density of zeros of a linear system modulo a prime power.


\section{Proof of Theorem \ref{mytrm}}
We fix some arbitrarily large integer $N$, so that
our asymptotic results are valid in the limit where $N$ tends to 
infinity.
We use the notation $[N]$ for the set of the first $N$ integers. 
Many of the parameters introduced in the sequel
implicitly depend on $N$ (such as the convex body $K$,
the map $p\mapsto\iota(p)$, the numbers $w,W,\overline{W}$, the set $X_0$...). 
\subsection{Elimination of a negligible set}
We start our proof by taking care of a
technicality. 
We would like to eliminate slightly awkward integers from the support of the von Mangoldt and the representation functions.
In fact, it will turn out handy to exclude prime powers and small primes from the support of $\Lambda$, so we introduce
$\Lambda'=1_{\mathcal{P}\setminus [N^{2\gamma}]}\log$, for some
constant
$\gamma\in (0,1/2)$ to be fixed later. It coincides with $\Lambda$ on the bulk of its support up to $N$, namely
large primes.

Similarly,
there is a fairly sparse subset $X_0\subset [N]$, depending on some constants $C_1>0$ and $\gamma>0$, on which the divisor function, and also the representation function, behave 
abnormally, so that our process of majorising by a
pseudorandom measure (carried out in Section 4) fails there. We recall
the following definition originating from \cite{Matt2}
and taken up in \cite{Matt}.
\begin{dfn}
Let $\gamma=2^{-k}$ for some $k\in\N$ and let $C_1>1$.
We define $X_0=X_0(\gamma,C_1,N)$ to be the set containing 0
and the set of positive integers $n\leq N$ satisfying either 
\begin{enumerate}
\item $n$ is excessively ``rough", i.e. divisible by some large prime power $p^a >\log^{C_1}N$ with $a\geq 2$, or
\item $n$ is excessively smooth in the sense that if $n=\prod_pp^{a_p}$ then
$$
\prod_{p\leq N^{(1/\log\log N)^3}}p^{a_p}\geq N^{\gamma/\log\log N}
$$
or
\item $n$ has a large square divisor $m^2\mid n$, which satisfies $m>N^{\gamma}$.
\end{enumerate}
\label{X0}
\end{dfn}
The following lemma,
which is Lemma 3.2 from \cite{Matt}, itself a synthesis
of Lemmas 3.2 and 3.3 from \cite{Matt2}, shows how negligible this set is.
\begin{lm}\label{skinny}
For $\Psi$ and $K$ as in Theorem \ref{mytrm}, we have
$$
\E_{n\in K\cap\Z^d}\sum_{i=t+1}^{t+s}1_{\psi_i(n)\in X_0}
\ll_{\gamma,d,s}\log^{-C_1/2}N.
$$
\end{lm}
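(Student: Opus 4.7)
The plan is to reduce the multidimensional counting problem to a one-dimensional estimate on $|X_0\cap [N]|$ via the finite complexity hypothesis, then control each of the three defining conditions of $X_0$ separately. By the union bound over $i$, it suffices to show that for each individual $i\in\{t+1,\ldots,t+s\}$ one has
$$\Pr_{n\in K\cap\Z^d}(\psi_i(n)\in X_0)\ll_{\gamma,d,L}\log^{-C_1/2}N,$$
since the outer sum in $i$ contributes only a factor of $s$ to the implied constant.

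The first step is to establish the one-dimensional bound $|X_0\cap [N]|\ll_\gamma N\log^{-C_1/2}N$. For condition (1), the number of $n\leq N$ divisible by some prime power $p^a>\log^{C_1}N$ with $a\geq 2$ is at most
$$\sum_{a\geq 2}\sum_{p^a>\log^{C_1}N}\frac{N}{p^a},$$
whose dominant contribution comes from $a=2$: here $p>\log^{C_1/2}N$, and $\sum_{p>\log^{C_1/2}N}1/p^2\ll\log^{-C_1/2}N$ by integral comparison, while the terms with $a\geq 3$ contribute at most $N\log^{-2C_1/3}N$, which is smaller. For condition (3), the count of $n\leq N$ admitting a square divisor $m^2\mid n$ with $m>N^\gamma$ is bounded by $\sum_{m>N^\gamma}N/m^2\ll N^{1-\gamma}$, which is polynomially negligible. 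Condition (2), the smoothness estimate, is the delicate one: by a Rankin-trick argument applied to $\Psi(N,y)$ with $y=N^{(1/\log\log N)^3}$, the number of $n\leq N$ whose $y$-smooth part exceeds $N^{\gamma/\log\log N}$ is $\ll N\exp(-c\log\log\log N)$ for some $c>0$, which beats any fixed negative power of $\log N$.

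Next I would exploit finite complexity. Since $\dot\psi_i\neq 0$ and its coefficients are bounded by $L$, for each integer $m$ the fiber $\{n\in\Z^d:\psi_i(n)=m\}$ is a translate of the rank-$(d-1)$ sublattice $\ker\dot\psi_i\cap\Z^d$, and its intersection with $K\subset[-N,N]^d$ has cardinality $\ll_{d,L}N^{d-1}$ by standard lattice-point counting. Summing over $m\in X_0\cap[0,N]$ yields
$$\#\{n\in K\cap\Z^d:\psi_i(n)\in X_0\}\ll_{d,L}N^{d-1}\cdot|X_0\cap[0,N]|\ll_{\gamma,d,L} N^d\log^{-C_1/2}N.$$
Dividing by $|K\cap\Z^d|$, which is of order $N^d$ under the standard nondegeneracy hypothesis on $K$ that makes Theorem~\ref{mytrm} nontrivial, completes the proof.

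The main obstacle is the smoothness bound in condition (2), which is classical but quantitatively delicate, and the fiber-counting step for thin convex bodies, where additional care is needed to relate $|K\cap\Z^d|$ to $\Vol(K)$. In practice the lemma is cited verbatim from Matthiesen's work \cite{Matt,Matt2}; the role of the sketch above is to indicate how a purely one-dimensional smallness statement about $X_0\cap[N]$ is lifted, through the affine-linear forms $\psi_i$ of bounded size, to the multidimensional average that appears in the statement.
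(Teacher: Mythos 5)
The paper does not actually prove this lemma: it is imported verbatim from Matthiesen (\cite[Lemma 3.2]{Matt}, itself a synthesis of \cite[Lemmas 3.2 and 3.3]{Matt2}). Your two-step architecture --- first bound $\abs{X_0\cap[N]}$ one-dimensionally, then lift through each $\psi_i$ by fibre counting, using that $\dot{\psi_i}\neq 0$ forces each fibre $\{\psi_i=m\}$ to meet $[-N,N]^d$ in $O_d(N^{d-1})$ points --- is the correct reconstruction of that source. Your treatment of conditions (1) and (3) is right, and (1) is indeed where the exponent $C_1/2$ comes from. The caveat about thin $K$ is also apt: the paper implicitly assumes $\Vol(K)\gg N^d$ (see the footnote in Appendix B), and in the one place the lemma is applied the expectation is immediately multiplied back by $\abs{K\cap\Z^d}\ll N^d$, so nothing is lost there.

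There is, however, a genuine quantitative error in your handling of condition (2). You assert that the number of $n\le N$ whose $y$-smooth part exceeds $z=N^{\gamma/\log\log N}$, with $y=N^{(\log\log N)^{-3}}$, is $\ll N\exp(-c\log\log\log N)$ and that this ``beats any fixed negative power of $\log N$''. It does not: $\exp(-c\log\log\log N)=(\log\log N)^{-c}$, which is far \emph{larger} than $\log^{-C_1/2}N=\exp(-\tfrac{C_1}{2}\log\log N)$, so as written this term would dominate everything and the lemma would fail. The Rankin argument you invoke does rescue you, but it must be carried out: the count is at most $\sum_{a\ge z,\,P^+(a)\le y}N/a\le Nz^{-\sigma}\prod_{p\le y}(1-p^{\sigma-1})^{-1}$, and choosing $\sigma=c_1/\log y$ with $c_1$ a small constant gives $z^{-\sigma}=\exp(-c_1u)$ with $u=\log z/\log y=\gamma(\log\log N)^2$, while the Euler product contributes only $\exp(O(\log\log N))$. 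The resulting bound $N\exp(-c\gamma(\log\log N)^2+O(\log\log N))$ genuinely beats every fixed power of $\log N$. With that correction (and it is exactly the bound proved in \cite{Matt2}), your proof is complete.
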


This enables us to state the next lemma, which allows us to ignore $X_0$ altogether. For any PDBQF $f$, we use to the notation $\overline{R_f}(n)$ to denote $1_{n\notin X_0}R_f(n)$.

\begin{lm}
If the parameter $C_1$ in Definition \ref{X0} is large enough, and for any choice of the constant $\gamma\in (0,1/2)$,
Theorem \ref{mytrm} holds if and only if, under the same conditions, we have
\begin{equation}
\label{eq:exceptional}
\sum_{n\in K\cap\Z^d} \prod_{i=1}^t\Lambda'(\psi_i(n))\prod_{j=t+1}^{t+s}
\overline{R}_{f_j}(\psi_j(n))=\beta_{\infty}\prod_p\beta_p+o(N^d).
\end{equation}
\label{exceptional}
\end{lm}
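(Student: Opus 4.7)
Since the two displayed equations share the putative main term $\beta_\infty \prod_p \beta_p$, the equivalence reduces to showing that the two left-hand sides differ by $o(N^d)$. Expanding $\Lambda = \Lambda' + (\Lambda - \Lambda')$ in each of the first $t$ factors and $R_{f_j} = \overline{R}_{f_j} + 1_{X_0} R_{f_j}$ in each of the last $s$ factors, and using the pointwise bounds $0 \leq \Lambda' \leq \Lambda$ and $0 \leq \overline{R}_{f_j} \leq R_{f_j}$, the difference is dominated by a sum of $O_{t,s}(1)$ terms of the form
$$T_k := \sum_{n \in K \cap \Z^d} D_k(\psi_k(n)) \prod_{l \neq k} F_l(\psi_l(n)),$$
where the swapped factor $D_k$ is either $\Lambda - \Lambda'$ or $1_{X_0} R_{f_k}$, and each unswapped factor $F_l$ is either $\Lambda$ or $R_{f_l}$. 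It thus suffices to show $T_k = o(N^d)$ for each such term.

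My plan is to apply Hölder's inequality with exponent $t + s$ and invoke two standard auxiliary facts. First, since each non-constant form $\psi_l$ has linear part with coefficients $\leq L$, each fibre of $\psi_l$ meets $K$ in $O_L(N^{d-1})$ lattice points, so for any $g \geq 0$ one has $\sum_{n \in K \cap \Z^d} g(\psi_l(n)) \ll_L N^{d-1} \sum_{m \leq N} g(m)$. Second, for any fixed $k \geq 1$ one has $\sum_{m \leq N} \Lambda(m)^k \ll N \log^{k-1} N$ and, via the classical bound $R_{f_j}(m) \ll_j \tau(m)$, also $\sum_{m \leq N} R_{f_j}(m)^k \ll_{k,j} N \log^{O_k(1)} N$. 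Together these yield $\sum_{n \in K \cap \Z^d} F_l(\psi_l(n))^{t+s} \ll N^d \log^{O(1)} N$ for every unswapped factor $F_l$.

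When $D_k = \Lambda - \Lambda'$, the function is supported on primes $\leq N^{2\gamma}$ together with prime powers $p^a \leq N$ with $a \geq 2$, the latter totalling $O(N^{1/2})$ integers; hence $\sum_{m \leq N}(\Lambda - \Lambda')(m)^{t+s} \ll N^{\max(2\gamma, 1/2)} \log^{t+s} N$, and Hölder then produces $T_k \ll N^{d - (1 - \max(2\gamma, 1/2))/(t+s)} \log^{O(1)} N = o(N^d)$ since the exponent $\max(2\gamma, 1/2) < 1$ is strictly less than $1$ by the hypothesis $\gamma \in (0, 1/2)$.

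The more delicate case is $D_k = 1_{X_0} R_{f_k}$, which is precisely where Lemma~\ref{skinny} is used. I would apply Cauchy--Schwarz to peel off the indicator,
$$T_k \leq \Bigl(\sum_{n \in K \cap \Z^d} 1_{X_0}(\psi_k(n))\Bigr)^{1/2} \Bigl(\sum_{n \in K \cap \Z^d} R_{f_k}(\psi_k(n))^2 \prod_{l \neq k} F_l(\psi_l(n))^2\Bigr)^{1/2},$$
bound the first bracket by $O(N^d \log^{-C_1/2} N)$ via Lemma~\ref{skinny}, and bound the second bracket by $O(N^d \log^{O(1)} N)$ by a further application of Hölder combined with the moment estimates above; this yields $T_k \ll N^d \log^{-C_1/4 + O(1)} N = o(N^d)$ provided $C_1$ is chosen large enough in terms of $d,t,s$. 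This step is the main obstacle: only the $\log^{-C_1/2} N$ savings coming from Lemma~\ref{skinny} are available to absorb the $\log^{O(1)} N$ losses inherent in Hölder on products of the divisor-sized function $R_f$ and of $\Lambda$, which is exactly why the freedom to take $C_1$ large is built into the statement.
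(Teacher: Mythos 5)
Your proposal is correct and follows essentially the same route as the paper: telescope the difference, isolate the $X_0$-indicator by Cauchy--Schwarz and absorb it via Lemma \ref{skinny} for $C_1$ large, and dispose of the $\Lambda\to\Lambda'$ swap using the sparseness of small primes and higher prime powers. The only cosmetic difference is that you obtain the required moment bounds on the unswapped factors from H\"older's inequality together with classical moment estimates for $\Lambda$ and $\tau$, whereas the paper invokes Lemma 3.1 of \cite{Matt} for the joint second moment and uses pointwise divisor bounds in the $\Lambda'$ step.
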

\begin{proof}
We show first that
$$
\sum_{\substack{n\in K\cap\Z^d\\\exists j\in\intnn{t+1}{t+s}
\colon
\psi_j(n)\in X_0}} \prod_{i=1}^t\Lambda(\psi_i(n))\prod_{j=t+1}^{t+s}
R_{f_j}(\psi_j(n))=o(N^d).
$$
Notice the use of the notation
$\intnn{t+1}{t+s}=\{t+1,\ldots,t+s\}$.
We get rid of the von Mangoldt factors by bounding their product by $\log^t N$.
Then we use the Cauchy-Schwarz inequality followed by
the triangle inequality, which implies that
$$\left(\sum_{\substack{n\in K\cap\Z^d\\\exists j\in\intnn{t+1}{t+s}\colon
\psi_j(n)\in X_0}} \prod_{j=t+1}^{t+s}
R_{f_j}(\psi_j(n))\right)^2
\leq \sum_{n\in K\cap\Z^d}\left(\prod_{j=t+1}^{t+s}
R_{f_j}(\psi_j(n))\right)^2\sum_{n\in K\cap\Z^d}\sum_{j=t+1}^{t+s}1_{\psi_j(n)\in X_0}.$$
Finally, we use Lemma 3.1 of \cite{Matt}
which ensures that the first factor is $N^d\log^{O_s(1)}N$
while the second is $N^{d}\log^{-C_1/2}N$ according to
Lemma~\ref{skinny}, so that taking 
$C_1$ larger than $2(t+O_s(1))$, we have the result.

To replace $\Lambda$ by $\Lambda'$, we remark that for each
$i\in [t]$, the number of $n\in K\cap \Z^d$ such that $\psi_i(n)\leq N^{2\gamma}$, resp. $\psi_i(n)$ is a prime power
and not a prime, is $O(N^{d-1+2\gamma})$, resp. $O(N^{d-1}\log N\sqrt{N})$. Using Cauchy-Schwarz or even pointwise bounds such as the divisor bound $R_{f_j}(n)\ll \tau (n)\ll_{\epsilon} N^{\epsilon}$,
we conclude the proof of Lemma 2.2.
\end{proof}
From now on,
we will drop the bar, so that $R_f$ coincides with the actual
representation function of $f$ on $[N]\setminus X_0$
and is 0 on $X_0$.

\subsection{Implementation of the $W$-trick}
The $W$-trick is by now fairly standard; see \cite{GT1}, \cite{GT2} for its implementations by Green and Tao, see also \cite{Matt2} and \cite{Matt}, which we are going to follow more closely.
The idea is to eliminate the obvious bias of the primes, like the strong preference for odd numbers, to produce a more uniform set. The representation function
of a PDBQF is also biased (it does not have the same average on every residue class),
so this has to be corrected, too. To do this we introduce, for some slowly growing function of $N$, such as 
$w(N)=\log\log\log N$, the products
$$W=\prod_{p\leq w}p\qquad\text{ and } \qquad\overline{W}=\prod_{p\leq w}p^{\iota(p)},$$
where $\iota (p)$ is defined
by
\begin{equation}
\label{def:iota}
p^{\iota(p)-1}<\log^{C_1+1}N\leq p^{\iota(p)}
\end{equation}
for some $C_1$ large enough as in Lemma \ref{exceptional}.
We observe that 
$$
\overline{W}\leq \prod_{p\leq w}p\log^{C_1+1}N\ll \exp ((C_1+1)w\log\log N))
$$
which is less than any power of $N$. In particular, we can decide that $\overline{W}<N^{\gamma}-1$ by choosing $N$ large enough.

Green and Tao did not need prime powers in their $W$, but in the case of a 
representation function of a PDBQF, they turn out to be necessary.
Notice that for $N$ large enough, for $p\leq w(N)= \log\log\log N$, we always have $\iota(p)\geq 1$.
We also introduce, for $b\in [\overline{W}]$, the function $\Lambda'_{b,\overline{W}}$ defined by
\begin{equation}
\label{Wtrickedvm}
\Lambda'_{b,\overline{W}}(n)=\frac{\phi(W)}{W}\Lambda' (\overline{W}n+b),
\end{equation}
where $\Lambda'=1_{\P}\log$
(we recall that $\phi(W)/W=\phi(\overline{W})/\overline{W}$).

Unfortunately, $$\overline{W}\geq\exp(w(N)C_1\log\log N)\gg_A \log^A N$$ for any $A>0$, so it will not be possible, without
a notable strengthening of the Siegel-Walfisz theorem, to
claim that $\Lambda'_{b,\overline{W}}$ has average $1+o(1)$.
A fortiori, it will not be possible to claim that
$\Lambda'_{b,\overline{W}}-1$ has the required uniformity property, in contrast to the normal $W$-trick.
We will be able to make do without this uniformity result.

We perform the $W$-trick on $R_f$ as well, for any PDBQF $f$ of discriminant $D$.
Following Matthiesen \cite[Definition 7.2]{Matt}, we define 
\begin{equation}
\label{Wtrickedrep}
r'_{f,b}(m):=\frac{\sqrt{-D}}{2\pi}
\frac{\overline{W}}{\rho_{f,b}(\overline{W})}R_f(\overline{W}m+b),
\end{equation}
for any $b$ such that
$\rho_{f,b}(\overline{W})>0$.\footnote{Even if $\rho_{f,b}(\overline{W})=0$, we will use the notation
$r'_{f,b}(m)$ with the understanding that it means 0.} By construction, $R_f(n)$ equals 0
in the case where $n\in X_0$, in particular in the case
where $n\equiv 0\mod p^{\iota(p)}$ with $p\leq w(N)$. Hence, $r'_{f,b}=0$
if $b\equiv 0\mod p^{\iota(p)}$, so $r'_{f,b}$ has the desired property. In fact (see \cite[Definition 7.2]{Matt})
for $b\neq 0\mod p^{\iota(p)}$ and any $p\leq w(N)$ satisfying
$\rho_{f,b}(\overline{W})=0$, we have
$$
\E_{n\leq M}r'_{f,b}(n)=1+O(\overline{W}^3M^{-1/2}).
$$ 
This average in arithmetic progressions relies on elementary convex geometry and is valid uniformly in the modulus, in sharp
contrast with the analogous result for primes.

We now decompose the left-hand side of \eqref{eq:exceptional} into sums over congruence classes. We write
$$
\Z^d\cap K =\bigcup_{a\in [\overline{W}]^d} \Z^d\cap K_a ,
$$
where $$K_a=\{x\in \R^d\mid \overline{W}x+a\in K\}$$
is again a convex body.
Putting $$F(n)=\prod_{i=1}^t\Lambda(\psi_i(n))\prod_{j=t+1}^{t+s}
R_{f_j}(\psi_j(n)),$$
we can write the left-hand side of \eqref{eq:exceptional} as
\begin{equation}
\label{sumovercongclasses}
\sum_{n\in \Z^d\cap K}F(n)=\sum_{a\in [\overline{W}]^d}\sum_{n\in 
\Z^d\cap K_a}F(\overline{W}n+a).
\end{equation}
Moreover, for $j\in[t+s]$, we can write $\psi_j(\W n+a)=\W\tilde{\psi_j}(n)+c_j(a)$ 
where $c_j(a)\in [\overline{W}]$ and $\tilde{\psi_j}$ 
is an affine-linear form
differing from $\psi_j$ only in the constant term.
We remark that if $\psi_i(a)$ is not coprime to $W$ for $i\in [t]$
or if $\rho_{f_j,\psi_j(a)}(\overline{W})=0$ or $\psi_j(a)\equiv 0\mod p^{\iota(p)}$ for
some $j\in \intnn{t+1}{t+s}$ and some prime $p\leq w(N)$, then for each $n\in K_a\cap\Z^d$ we have
$F(n)=0$\footnote{Even if $(\psi_i(a),W)>1$, the integer
$\psi_i(a)$ could still be a prime $p\leq w(N)< N^{\gamma}$, but given that primes smaller than $N^{\gamma}$ are not in the support of $\Lambda'$, we still have $F(n)=0$.}. Thus the residues $a$  which 
bring a nonzero contribution to the right-hand side of
\eqref{sumovercongclasses}
are all mapped by
$\Psi$ to tuples $(b_1,\ldots,b_{t+s})$ belonging to the
following set.
\begin{dfn}
\label{bts}
We denote by $B_{t,s}$ the set of residues $b\in [\overline{W}]^{t+s}$
such that
\begin{enumerate}
\item for any $i\in[t]$, $(b_i, W)=1$;
\item for any $ j\in\intnn{t+1}{t+s}$
and any prime $p\leq w(N)$, we have $b_j\neq 0\mod p^{\iota(p)}$;
\item for any $ j\in\intnn{t+1}{t+s}$, $b_j$ is representable
by $f_j$ modulo $\overline{W}$, that is, 
$\rho_{f_j,b_j}(\overline{W})>0$.
\end{enumerate}
Moreover, for an affine-linear system $\Psi : \Z^d\rightarrow \Z^{t+s}$, we define $A_{\Psi}$ 
to be the set of all $a\in [W]^d$ such that
$(c_i(a))_{i\in[t+s]}\in B_{t,s}$. We recall that $c_i(a)$ is the reduction modulo $\overline{W}$ in $[\overline{W}]$ of $\psi_i(a)$; we will also denote by $c(a)$ the vector
$(c_i(a))_{i\in[t+s]}$. 
We usually drop the subscripts on $B_{t,s}$ and $A_{\Psi}$ when no ambiguity is possible.
\end{dfn}
Now we rewrite \eqref{sumovercongclasses} as
\begin{equation}
\begin{split}
\sum_{n\in \Z^d\cap K}F(n) &=\sum_{a\in A_{\Psi}}
\left(\frac{W}{\phi(W)}\right)^t\prod_{j=t+1}^{t+s}
\frac{2\pi}{\sqrt{-D_j}}\frac{\rho_{f_j,\psi_j(a_j)}(\overline{W})}{\overline{W}}\sum_{n\in K_a\cap\Z^d} F'_a(n)\\
&=\prod_{j=t+1}^{t+s}\frac{2\pi}{\sqrt{-D_j}}\sum_{a\in[\overline{W}]^d}Q(a)\sum_{n\in K_a\cap\Z^d} F'_a(n),\end{split}
\end{equation}
where
$$
Q(a)=\prod_{i=1}^t\Lambda_{W}(\psi_i(a))
\prod_{j=t+1}^{t+s}\frac{\rho_{f_j,\psi_j(a)}(\overline{W})}{\overline{W}}1_{\forall p\leq w,\psi_j(a)\neq 0\mod p^{\iota(p)}},
$$
and
$$
F'_a(n)=\prod_{i=1}^t\Lambda'_{c_i(a),\overline{W}}(\tilde{\psi_i}(n))\prod_{j=t+1}^{t+s}r'_{f_j,c_j(a)}(\tilde{\psi_j}(n)).
$$

Furthermore, we use the identity
$$
F'_a(n)=\prod_{i=1}^{t}\Lambda'_{c_i(a),\overline{W}}(\tilde{\psi}_i(n))+\sum_{j=1+t}^{t+s}(r'_{f_j,c_j(a)}(\tilde{\psi}_j(n))-1)F'_{a,j}(n),
$$
where $$F'_{a,j}(n)=
\prod_{k<j}
r'_{f_k,c_k(a)}(\tilde{\psi}_k(n))\prod_{i=1}^{t}\Lambda'_{c_i(a),\overline{W}}(\tilde{\psi}_i(n)).$$
Thus, equation \eqref{sumovercongclasses} yields
\begin{equation}
\label{sumovercongclassesII}
\left(\prod_{j=t+1}^{t+s}\frac{2\pi}{\sqrt{-D_j}}\right)^{-1}\sum_{n\in \Z^d\cap K}F(n)=T_1+T_2,\end{equation}
where
\begin{equation}
\label{T1}
T_1=\sum_{a\in [\overline{W}]^d}Q(a)\sum_{n\in 
\Z^d\cap K_a}\prod_{i=1}^{t}\Lambda'_{c_i(a),\overline{W}}(\tilde{\psi}_i(n))
\end{equation}
and
\begin{equation}
\label{T2}
T_2=\sum_{j=t+1}^{t+s}\sum_{a\in A}\sum_{n\in 
\Z^d\cap K_a}(r'_{f_j,c_j(a)}(\tilde{\psi}_j(n))-1)F'_{a,j}(n).
\end{equation}


Here, the first term is expected to be the main term, of the order of magnitude of $\Vol(K)$, while the second one
involving the difference of a $\overline{W}$-tricked
representation function to its average 1, is expected to be negligible, that is, $o(N^d)$. 

\subsection{Analysis of the main term}
To deal with the main term \eqref{T1}, ideally, we would like to claim that the inner sum satisfies
$$
\sum_{n\in 
\Z^d\cap K_a}\prod_{i=1}^{t}\Lambda'_{c_i(a),\overline{W}}(\tilde{\psi}_i(n))=\Vol(K_a)+o((N/\overline{W})^d).
$$
Unfortunately, this statement, proven by Green and Tao \cite[Theorem 5.1]{GT2} with $W$
instead of $\overline{W}$, is beyond reach at the moment,
basically because $\overline{W}$ is too large for the Siegel-Walfisz theorem to apply. If we were able to lower the
prime powers $p^{\iota(p)}\approx \log^{C_1}N$ involved in $\overline{W}$ to smaller prime powers $p^{\eta(p)}\approx \log\log N$, the resulting $\widetilde{W}$ would be small enough for Siegel-Walfisz (and more generally \cite[Theorem 5.1]{GT2}) to apply. 
Let us then define $\eta(p)$ by 
\begin{equation}
\label{def:eta}
p^{\eta(p)-1}<\log\log N\leq p^{\eta(p)}
\end{equation}
and $\widetilde{W}=\prod_{p\leq w}p^{\eta(p)}\leq\prod_{p\leq w}p\log\log N\ll \exp((\log\log\log N)^2)$.

The reader may wonder at this point why we performed the
$\overline{W}$ trick at all, if we really would like to 
deal with congruence classes modulo $\widetilde{W}$. The reason
for this is that Lemma~\ref{exceptional} would not hold if
$X_0$ contained all numbers smaller than $N$ that have a prime power factor larger than $\log\log N$: this is not a sparse enough set, given the possibly large values of $R_f$ and $\Lambda$. 
Thus, performing the $\widetilde{W}$-trick,
we could not force the residues to satisfy $c_j(a)\neq 0\mod p^{\eta(p)}$, whereas the $\overline{W}$-trick allowed us to force $c_j(a)\neq 0\mod p^{\iota(p)}$. 
Imposing such a nonzero congruence will prove crucial to ensure
that $r'_{f_j,c_j(a)}$ is dominated by a pseudorandom
majorant, and thus to ensure the term $T_2$ is negligible.

To lower the prime powers, 
we shall rely on the powerful lift-invariance property
of Matthiesen \cite[Lemma 6.3]{Matt}.

\begin{lm}
\label{liftinvar}
Let $f$ be a PDBQF of discriminant $D$. Let $p_0$ be a prime
and $\alpha\geq v_{p_0}(d)$ be an integer. Suppose
$b\neq 0\mod p_0^{\alpha}$. Then for all $\beta\geq \alpha$ and
$c\equiv b\mod p_0^{\alpha}$, we have
$$
\rho_{f,b}(p_0^{\alpha})p_0^{-\alpha}=\rho_{f,c}(p_0^{\beta})p_0^{-\beta}.
$$
\end{lm}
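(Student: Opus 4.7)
The plan is an inductive Hensel-lifting argument. By induction on $\beta-\alpha$ it suffices to treat the case $\beta=\alpha+1$, in which the claim reduces to $\rho_{f,c}(p_0^{\alpha+1})=p_0\cdot\rho_{f,b}(p_0^\alpha)$. Grouping solutions of $f(x,y)\equiv c\pmod{p_0^{\alpha+1}}$ by their reduction modulo $p_0^\alpha$---which, since $c\equiv b\pmod{p_0^\alpha}$, ranges exactly over the solutions of $f\equiv b\pmod{p_0^\alpha}$---the task becomes to show that each solution modulo $p_0^\alpha$ admits exactly $p_0$ lifts. The hypothesis $b\not\equiv 0\pmod{p_0^\alpha}$ carries through the induction without trouble: any $c\equiv b\pmod{p_0^\alpha}$ satisfies $v_{p_0}(c)=v_{p_0}(b)<\alpha<\alpha+1$, so the hypothesis persists at the next step.

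Writing a lift as $(x_0+p_0^\alpha u,\,y_0+p_0^\alpha v)$ with $(u,v)\in[p_0]^2$ and exploiting the homogeneity of $f$ of degree $2$, we expand
\[
f(x_0+p_0^\alpha u,\,y_0+p_0^\alpha v) = f(x_0,y_0) + p_0^\alpha L_{x_0,y_0}(u,v) + p_0^{2\alpha}f(u,v),
\]
with $L_{x_0,y_0}(u,v)=(2ax_0+by_0)u+(bx_0+2cy_0)v$. Since $2\alpha\geq\alpha+1$ whenever $\alpha\geq 1$, the quadratic term vanishes modulo $p_0^{\alpha+1}$, and the lifting condition becomes a single linear congruence $L_{x_0,y_0}(u,v)\equiv k\pmod{p_0}$, where $k\in\Z/p_0\Z$ is determined by $(c-f(x_0,y_0))/p_0^\alpha$. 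This admits exactly $p_0$ solutions $(u,v)\in[p_0]^2$ precisely when the linear form $L_{x_0,y_0}$ is nonzero modulo $p_0$, i.e., when the gradient $(2ax_0+by_0,\,bx_0+2cy_0)$ does not vanish modulo $p_0$.

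The main obstacle is to verify this nondegeneracy at every solution $(x_0,y_0)$ of $f\equiv b\pmod{p_0^\alpha}$. The key algebraic input is the pair of identities
\[
b(2ax_0+by_0) - 2a(bx_0+2cy_0) = Dy_0, \qquad 2c(2ax_0+by_0) - b(bx_0+2cy_0) = -Dx_0,
\]
which show that if both coordinates of the gradient are divisible by $p_0$, then $p_0\mid Dx_0$ and $p_0\mid Dy_0$. A short $p_0$-adic valuation chase using these identities, combined with the quadratic homogeneity bound $v_{p_0}(f(x_0,y_0))\geq 2\min(v_{p_0}(x_0),v_{p_0}(y_0))$, then forces $v_{p_0}(b)=v_{p_0}(f(x_0,y_0))$ to reach $\alpha$, contradicting $b\not\equiv 0\pmod{p_0^\alpha}$ exactly under the threshold $\alpha\geq v_{p_0}(D)$. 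This closes the lifting step, and the induction yields the lemma.
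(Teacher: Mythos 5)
There is a genuine gap, and it is fatal to the strategy rather than a fixable detail. (Note also that the paper itself does not prove this lemma: it is imported wholesale from Matthiesen \cite[Lemma 6.3]{Matt}, so your argument stands or falls on its own.) The claim at the heart of your Hensel step --- that the gradient $(2ax_0+by_0,\,bx_0+2cy_0)$ (here $a,b,c$ are the coefficients of $f$, clashing with the lemma's use of $b,c$ for residues) is nonzero modulo $p_0$ at \emph{every} solution of $f\equiv b\mod p_0^{\alpha}$ --- is false. Take $f(x,y)=x^2+y^2$, so $D=-4$, with $p_0=5$, $\alpha=3$ and target residue $50$: the hypotheses hold, since $\alpha\geq v_5(-4)=0$ and $50\neq 0\mod 125$, and $(x_0,y_0)=(5,5)$ solves $x^2+y^2\equiv 50\mod 125$, yet its gradient $(10,10)$ vanishes modulo $5$. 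Concretely, every one of the $25$ lifts $(5+125u,5+125v)$ satisfies $x^2+y^2\equiv 50\mod 625$, so this solution contributes $25$ lifts to the residue $c=50$ and $0$ lifts to each other residue $c\equiv 50\mod 125$, not the uniform $p_0$ lifts your induction needs. Your valuation chase cannot close this: when $p_0\nmid D$ the two identities force only $p_0\mid x_0$ and $p_0\mid y_0$, hence $v_{p_0}(f(x_0,y_0))\geq 2$, which is perfectly compatible with $v_{p_0}(b)=2<\alpha$ as in the example; it does not ``reach $\alpha$''. When $p_0\mid D$ the identities give even less.

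The lemma is true only as an aggregate count, not as a pointwise lifting statement, and the degenerate solutions (those with $p_0\mid x_0$ and $p_0\mid y_0$) must be handled by a separate mechanism. The standard route is the substitution $(x,y)=(p_0x',p_0y')$, which turns such solutions into solutions of $f(x',y')\equiv b/p_0^{2}\mod p_0^{\alpha-2}$ and sets up a recursion in which the hypothesis $b\neq 0\mod p_0^{\alpha}$ guarantees termination; one then checks that the contributions of degenerate and nondegenerate solutions combine to give the stated ratio. Alternatively, for $p_0\nmid D$ one can use the explicit formula $\rho_{f,\beta}(p^m)p^{-m}=(1-\chi_D(p)p^{-1})\sum_{k=0}^{m}1_{p^k\mid\beta}\chi_D(p^k)$ quoted in Appendix A of this paper, from which lift-invariance is immediate because the right-hand side depends only on $v_p(\beta)$, and that valuation is determined by $\beta\mod p^{\alpha}$ once $\beta\neq 0\mod p^{\alpha}$. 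Your Hensel analysis correctly handles the nondegenerate solutions, but as written it proves a false pointwise statement and therefore does not establish the lemma.
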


To reduce prime powers, we decompose the residue set 
$[\overline{W}]^d$ into $X_1$ and $X_2$, where
$$X_1=\{a\in[\overline{W}]^d\mid \forall j \in \intnn{t+1}{t+s} \quad\forall p\leq w(N)\qquad\psi_j(a)\neq 0\mod p^{\eta(p)}\}$$
and $X_2$ is the complement of $X_1$ in $[\overline{W}]^d$. We also introduce
$$Y_1=\{a\in[\widetilde{W}]^d\mid \forall j \in \intnn{t+1}{t+s} \quad\forall p\leq w(N)\qquad\psi_j(a)\neq 0\mod p^{\eta(p)}\}.$$
First, for $a\in X_1$, we remark that $Q(a)$ depends only on the reduction $\tilde{a}\in Y_1$ of $a$. 
Indeed, writing 
$$
\widetilde{Q}(a)=\prod_{i=1}^t\Lambda_{W}(\psi_i(a))
\prod_{j=t+1}^{t+s}\frac{\rho_{f_j,\psi_j(a)}(\widetilde{W})}{\widetilde{W}}1_{\forall p\leq w,\psi_j(a)\neq 0\mod p^{\eta(p)}},
$$
we have $\widetilde{Q}(\tilde{a})=Q(a)$.
This shows that
\begin{equation}
\label{sumX1}
\begin{split}
\sum_{a\in X_1}Q(a)\sum_{n\in 
\Z^d\cap K_a}\prod_{i=1}^{t}\Lambda'_{c_i(a),\overline{W}}(\tilde{\psi}_i(n)) &=\sum_{a\in Y_1}Q(a)\sum_{\substack{b\in [\overline{W}]^d\\b\equiv a\mod \widetilde{W}}}\sum_{n\in 
\Z^d\cap K_b}\prod_{i=1}^{t}\Lambda'_{c_i(b),\overline{W}}(\tilde{\psi}_i(n))\\
&=
\sum_{a\in Y_1}Q(a)\sum_{n\in 
\Z^d\cap K_a}\prod_{i=1}^{t}\Lambda'_{c_i(a),\widetilde{W}}(\tilde{\psi}_i(n))
.
\end{split}
\end{equation}
We admit a slight abuse of notation: in the last term, $\tilde{\psi}_i$ may be different from the other occurrences of $\tilde{\psi}_i$ (differing at most in the constant term) and $c_i(a)\equiv \psi_i(a)\mod \widetilde{W}$ lies in $[\widetilde{W}]$.
They satisfy $\psi_i(\widetilde{W}n+a)=\widetilde{W}\tilde{\psi}_i(n)+c_i(a)$.
Now we claim that an asymptotic for the inner sum follows from the work of Green and Tao \cite{GT2}. To check this, notice that the properties
of $W=W(N)$ that are used there to prove Theorem 5.1 are the following.
\begin{itemize}
\item There is a function $w(N)$ tending to infinity such that every prime $p\leq w(N)$ divides $W$. This is still the case
for $\widetilde{W}$, as one can easily check that $\eta(p)\geq 1$ for all $p\leq \log\log\log N$.
\item The exceptional primes for the system $\widetilde{\Psi}$,
that is primes $p$ modulo which two forms of the systems are
affinely dependent, are $O(w)=O(\log\log N)$; this is still true in our setting. This bound was important in the application of Theorem D.3 to prove Proposition 6.4 in \cite{GT2}.
\item The size of $W$ is reasonable, namely $W=O(\log N)$; this is crucial to derive equation (12.9) from equation (12.10) in \cite{GT2}.
We also have this bound for $\widetilde{W}$.
\end{itemize}
Thus for any $a\in Y_1$ that has a nonzero contribution, in
particular, satisfying $(c_i(a),W)=1$ for all $i\in[t]$, we get
$$
\sum_{n\in 
\Z^d\cap K_a}\prod_{i=1}^{t}\Lambda'_{c_i(a),\widetilde{W}}(\tilde{\psi}_i(n))=\Vol(K_a)+o((N/\widetilde{W})^d).
$$
Inserting this equality in \eqref{sumX1} yields
$$
\sum_{a\in X_1}Q(a)\sum_{n\in 
\Z^d\cap K_a}\prod_{i=1}^{t}\Lambda'_{c_i(a),\overline{W}}(\tilde{\psi}_i(n))=(\Vol(K)+o(N^d))\E_{a\in[\widetilde{W}]^d}
Q(a)1_{a\in Y_1}.
$$
We exploit multiplicativity to write
$$
\E_{a\in[\widetilde{W}]^d}
Q(a)1_{a\in Y_1}=\prod_{p\leq w}\E_{a\in \Zp{\eta(p)}{d}}
\prod_{i=1}^t\Lambda_p(\psi_i(a))\prod_{j=t+1}^{t+s}\frac{\rho_{f_j,\psi_j(a)}(p^{\eta(p)})}{p^{\eta(p)}}1_{\psi_j(a)\neq 0\mod p^{\eta(p)}}.
$$
Now we invoke results from the Appendix A to conclude. Indeed, replacing $\iota(p)$ by $\eta(p)$ in Lemma A.2, we find that
$$
\E_{a\in \Zp{\eta(p)}{d}}
\prod_{i=1}^t\Lambda_p(\psi_i(a))\prod_{j=t+1}^{t+s}\frac{\rho_{f_j,\psi_j(a)}(p^{\eta(p)})}{p^{\eta(p)}}1_{\psi_j(a)\neq 0\mod p^{\eta(p)}}=\beta_p+O((\log\log N)^{-1/3}).
$$
Using Lemma A.3, we conclude that
$$
\E_{a\in[\widetilde{W}]^d}
Q(a)1_{a\in Y_1}
=\prod_p\beta_p+o(1)
$$
 and finally we can write
$$\sum_{a\in X_1}Q(a)\sum_{n\in 
\Z^d\cap K_a}\prod_{i=1}^{t}\Lambda'_{c_i(a),\overline{W}}(\tilde{\psi}_i(n))=\beta_{\infty}\prod_p\beta_p+o(N^d).$$
\subsection{The sum over $X_2$}
We now turn to the sum over $X_2$,  which we would like to show  is $o(N^d)$.
Lacking an asymptotic for the inner sum, we shall be content
with an upper bound. Luckily, such a bound is available, thanks to a majorant of the von Mangoldt function devised by Goldston and Y\i{}ld\i{}r\i{}m; see \cite[Appendix D]{GT2} and the references therein.
Let us introduce
\begin{equation}
\label{lambdachir}
\Lambda_{\chi,R}(n)=\log R\left(\sum_{\ell\mid n}\mu (\ell)\chi\left(\frac{\log \ell}{\log R}\right)\right)^2,
\end{equation}
where $R=N^{\gamma}$ and $\gamma$ is to be chosen later, and $\chi$ is a smooth even function $\R \rightarrow [0,1]$ supported on $[-1,1]$ satisfying
$\chi(0)=1$ and $\int_0^1\chi'(x)^2dx=1$.
Finally, for any $b\in [\overline{W}]$
coprime to $\overline{W}$,
we define the Green-Tao majorant
$$\nu_{\mathrm{GT},b}:n\mapsto \frac{\phi(W)}{W}\Lambda_{\chi,R}(\overline{W}n+b).$$ The
following lemma shows that this function majorises the $\overline{W}$-tricked
von Mangoldt function.
 
\begin{lm}\label{nugt}
For any $b\in [\overline{W}]$ coprime to $W$, 
we have
$$
\Lambda'_{b,\overline{W}}(n)\ll \nu_{\mathrm{GT},b}(n)
$$for $n\in [R,N']$, where the implied constant depends
only on $\gamma$. 
\end{lm}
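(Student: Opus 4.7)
The proof reduces to a pointwise comparison. Note that the factor $\phi(W)/W$ appears on both sides of the claimed inequality, so it suffices to prove
$$
\Lambda'(m)\ll_{\gamma}\Lambda_{\chi,R}(m)
$$
for $m=\overline{W}n+b$ with $n\in[R,N']$. The strategy is to exploit the sparsity of the support of $\Lambda'$, which reduces the verification to the case where $m$ is a large prime, and then to observe that the Goldston--Y\i{}ld\i{}r\i{}m sum degenerates to a single term in that case.

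First, the right-hand side is manifestly nonnegative, being $\log R$ times a square. The left-hand side $\Lambda'(m)$ vanishes unless $m$ is a prime with $m>N^{2\gamma}=R^{2}$, so we may restrict attention to such $m$. In this regime, the only divisors of $m$ are $1$ and $m$; since $m>R$, the cutoff function $\chi$ vanishes at $\log m/\log R\geq 1$ by its support condition, whereas the $\ell=1$ term contributes $\mu(1)\chi(0)=1$. Plugging into the definition \eqref{lambdachir} gives
$$
\Lambda_{\chi,R}(m)=\log R=\gamma\log N.
$$

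For the remaining bound, note that $\Lambda'(m)=\log m$ and $m\leq\overline{W}(N'+1)$. Since $\overline{W}\ll\exp((C_{1}+1)w\log\log N)$ grows slower than any power of $N$, and since $N'\leq N$ (up to a harmless constant), we have $\log m=O(\log N)=O(\gamma^{-1}\log R)$. Hence $\Lambda'(m)\ll_{\gamma}\Lambda_{\chi,R}(m)$, establishing the lemma. The only potential subtlety is the uniformity of the constant in $\gamma$, which is automatic from the elementary comparison $\log m\leq(1+o(1))\log N=(1+o(1))\gamma^{-1}\log R$; there is no genuine analytical obstacle, because the Goldston--Y\i{}ld\i{}r\i{}m construction is built from the tautological identity $\sum_{\ell\mid m}\mu(\ell)=1_{m=1}$, and the $\overline{W}$-trick only contributes the benign multiplicative factor $\phi(W)/W$ that cancels from both sides.
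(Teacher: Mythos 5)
Your proof is correct and follows essentially the same route as the paper's: restrict to $n$ with $\overline{W}n+b$ a (large) prime, observe that the truncated divisor sum then collapses to the single term $\ell=1$ so that $\Lambda_{\chi,R}(\overline{W}n+b)=\log R=\gamma\log N$, and compare with $\Lambda'(\overline{W}n+b)=O(\log N)$. You merely spell out the degeneration of the Goldston--Y\i{}ld\i{}r\i{}m sum in more detail than the paper does.
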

\begin{proof}
To prove the first part of the lemma,
we have to take care only of the integers $n\in [R,N']$
such that $\overline{W}n+b$ is prime.
In this case, the left-hand side is bounded above by a
constant multiple of
$\frac{\phi(W)}{W}\log N$
while the right-hand side is $\frac{\phi(W)}{W}\log R$,
but $\log R=\gamma \log N\gg\log N$.
\end{proof}
Notice that the bound is in fact valid on $[N']$, because
if $n\leq R=N^{\gamma}$, we obviously have $\overline{W}n+b\leq N^{\gamma}(\overline{W}+1)<N^{2\gamma}$ because $N$ is large enough, and $\Lambda'(\overline{W}n+b)=0$
by the definition of $\Lambda'$ (see Subsection 2.1).

Now if $\gamma$ is small enough, $\nu_{\mathrm{GT},b}$ is known to satisfy the linear forms condition. This was shown by Green and Tao
(see \cite[Appendix D]{GT2} and the Appendix C of this paper)
with $W$ instead of $\overline{W}$, but the reader may check that in this portion of their article, the bound $W=O(\log N)$
on the size of $W$ plays no role, so that the argument works just as well with $\overline{W}$.
In particular,
$$
\sum_{n\in 
\Z^d\cap K_a}\prod_{i=1}^{t}\Lambda'_{c_i(a),\overline{W}}(\tilde{\psi}_i(n))\ll \sum_{n\in 
\Z^d\cap K_a}\prod_{i=1}^{t}\nu_{\mathrm{GT},c_i(a)}(\tilde{\psi}_i(n))=\Vol(K_a)+o((N/\overline{W})^d).
$$
Here we used crucially the fact that no two of the forms $\tilde{\psi}_i$ are rational multiple of one another; this follows from the finite complexity assumption on the original system $\Psi$. From this, we infer that
$$
\sum_{a\in X_2}Q(a)\sum_{n\in 
\Z^d\cap K_a}\prod_{i=1}^{t}\Lambda'_{c_i(a),\overline{W}}(\tilde{\psi}_i(n))\ll N^d\E_{a\in[\overline{W}]^d}Q(a)1_{a\in X_2}.
$$
We use the triangle inequality to bound the inner expectation by
$$\sum_{\substack{p\leq w\\j\in \intnn{t+1}{t+s}}}\E_{a\in [\overline{W}]^d}1_{p^{\eta(p)}\mid\psi_j(a)}Q(a),
$$
which, by multiplicativity, can be rewritten as
$$\sum_{\substack{q\leq w\\j\in \intnn{t+1}{t+s}}}\E_{a\in (\Z/q^{\iota(q)}\Z)^d}1_{q^{\eta(q)}\mid\psi_j(a)}Q_q(a)\prod_{p\leq w,q\neq p}\E_{a\in\Zp{\iota(p)}{d}}Q_p(a).$$
Here, as the reader can guess, we introduced
$$
Q_p(a)=\prod_{i=1}^t\Lambda_{p}(\psi_i(a))
\prod_{j=t+1}^{t+s}\frac{\rho_{f_j,\psi_j(a_j)}(p^{\iota(p)})}{p^{\iota(p)}}1_{\psi_j(a)\neq 0\mod p^{\iota(p)}},
$$
for any prime $p$, so that $Q(a)=\prod_{p\leq w(N)}Q_p(a)$. Again, we invoke the results of Appendix A.
Lemmas A.2 and A.3 imply that
$$
\prod_{p\leq w,q\neq p}\E_{a\in\Zp{\iota(p)}{d}}Q_p(a)=O(1)
$$
while the proof of Lemma A.1 shows that
$$
\E_{a\in (\Z/q^{\iota(q)}\Z)^d}1_{q^{\eta(q)}\mid\psi_j(a)}Q_q(a)=O((\log\log N)^{-1/3}).
$$
Because $w(N)=\log\log\log N$ is so small, we get as desired
$$\sum_{a\in X_2}Q(a)\sum_{n\in 
\Z^d\cap K_a}\prod_{i=1}^{t}\Lambda'_{c_i(a),\overline{W}}(\tilde{\psi}_i(n))=o(N^d).$$
\subsection{Reduction of the main theorem}
Given the above discussion, the main theorem (Theorem \ref{mytrm}) boils down to proving that the term $T_2$ defined
in equation \eqref{T2} is $o(N^d)$. This is a consequence of the next proposition. 
\begin{trm}
Let $d,t$ and $s$ be nonnegative integers,
and let $f_0,f_{t+1},\ldots,f_{t+s}$ be PDBQF. Let $N'= N/\overline{W}$, 
and $\Phi=(\phi_0,\ldots,\phi_{1+t+s})$ be a system of
affine-linear forms $\Z^d\rightarrow \Z^{t+s+1}$ of finite complexity whose linear coefficients are bounded
by a constant. Let $L\subset [0,N']^d$ be a convex set such that
$\Phi(L)\subset [1,N']^{t+s+1}$. Then for any $b\in B_{t,s+1}$,
we have
$$
\sum_{n\in\Z^d\cap L}(r'_{f_0,b_0}(\phi_0(n))-1)\prod_{i\in [t]}\Lambda'_{b_i,\overline{W}}(\phi_i(n))\prod_{j=t+1}^{t+s}r'_{f_j,b_j}(\phi_j(n))=o(N'^d).
$$
\label{reduction}
\end{trm}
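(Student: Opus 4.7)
The plan is to follow the Green--Tao transference framework: combine a \emph{generalized von Neumann theorem} with a pseudorandom majorant dominating every factor, and then exploit a Gowers uniformity estimate for the $\overline{W}$-tricked representation function. Concretely, one aims to bound the multilinear sum by a suitable Gowers $U^k$-norm of $r'_{f_0,b_0}-1$, where $k$ depends only on the complexity of $\Phi$, and then invoke Matthiesen's result that this difference is Gowers-uniform.

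The critical novelty lies in constructing a common pseudorandom majorant $\nu$ that dominates \emph{every} factor in the product simultaneously. For the individual functions, one has the Goldston--Y\i{}ld\i{}r\i{}m majorant $\nu_{\mathrm{GT},b}$ for $\Lambda'_{b,\overline{W}}$ (Lemma \ref{nugt}) and Matthiesen's majorant $\nu_{f,b}$ for $r'_{f,b}$ (from \cite{Matt}). Following the strategy announced in the abstract, I would build $\nu$ as the average of these individual majorants, each evaluated on its own form, namely
\[
\nu(n) = \frac{1}{t+s+1}\sum_{i=0}^{t}\nu_{\mathrm{GT},b_i}(\phi_i(n)) + \frac{1}{t+s+1}\sum_{j=t+1}^{t+s}\nu_{f_j,b_j}(\phi_j(n)),
\]
so that $\nu$ dominates each $\Lambda'_{b_i,\overline{W}}(\phi_i(n))$ and each $r'_{f_j,b_j}(\phi_j(n))$ up to an absolute multiplicative constant, and also $\nu \gg 1$ pointwise, whence $|r'_{f_0,b_0}(\phi_0(n))-1| \ll \nu(n)$. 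One also includes a copy of $\nu_{f_0,b_0}(\phi_0(\cdot))$ in the sum in order to control the marked factor $r'_{f_0,b_0}(\phi_0(\cdot))$.

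Assuming $\nu$ satisfies the relevant linear forms condition (i.e.\ is sufficiently pseudorandom, which is deferred to Appendix B), the generalized von Neumann theorem of \cite{GT2}, adapted to a setting in which the majorant is a sum rather than a single sieve, yields a bound of the shape
\[
\left\lvert \sum_{n\in\Z^d\cap L}(r'_{f_0,b_0}(\phi_0(n))-1)\prod_{i\in[t]}\Lambda'_{b_i,\overline{W}}(\phi_i(n))\prod_{j=t+1}^{t+s}r'_{f_j,b_j}(\phi_j(n))\right\rvert \ll N'^d \bigl(\lVert r'_{f_0,b_0}-1\rVert_{U^k} + o(1)\bigr),
\]
where the Gowers norm is taken over a suitable cyclic group of size $\asymp N'$, and $k$ depends only on the complexity of the system $\Phi$. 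Matthiesen's Gowers uniformity estimate for the $\overline{W}$-tricked representation function then delivers $\lVert r'_{f_0,b_0}-1\rVert_{U^k}=o(1)$, concluding the proof.

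The main obstacle is neither the von Neumann reduction (standard once a pseudorandom majorant is available) nor the uniformity of $r'_{f_0,b_0}-1$ (imported from Matthiesen), but rather the verification that the averaged majorant $\nu$ is itself pseudorandom. Expanding the correlations of $\nu$ produces mixed moments in which some factors are of Goldston--Y\i{}ld\i{}r\i{}m type and others are of Matthiesen type, evaluated on many linear forms. Each of the two majorants individually satisfies the linear forms condition, but the cross-correlations between the two families are genuinely new and form the technical heart of the paper, carried out in Appendix B.
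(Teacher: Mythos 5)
Your high-level strategy coincides with the paper's: reduce to a generalised von Neumann theorem, import the Gowers uniformity of $r'_{f_0,b_0}-1$ from Matthiesen, build the common majorant by averaging the known majorants, and isolate the mixed linear forms condition as the genuinely new technical work. But your construction of the majorant has two concrete defects that would break the argument as written. First, you define $\nu(n)$ for $n\in\Z^d$ by composing each individual majorant with its own form $\phi_i$. The generalised von Neumann theorem (Theorem \ref{vonNeumann}) requires a \emph{one-variable} measure $\nu:\Z/M\Z\to\R_+$ with $\abs{f_i(x)}\leq\nu(x)$ for every $i$ and every $x\in[N']$: the repeated Cauchy--Schwarz steps bound $f_i(\phi_i(n))$ by $\nu(\phi_i(n))$ one form at a time, and the linear forms condition is then applied to products $\prod_k\nu(\theta_k(n))$ over various derived systems. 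A $d$-variable function that already mixes the forms cannot be fed into this machinery, and the linear forms condition for it is not even defined. The correct object is the average taken at a single argument, as in \eqref{eqdef:nustar}: $\nu^*(n)=\frac{1}{t+s+2}\bigl(1+\sum_{i=1}^t\nu_{\mathrm{GT},b_i}(n)+\sum_{j=t+1}^{t+s}\nu_{\mathrm{Matt},b_j,D_j}(n)+\nu_{\mathrm{Matt},b_0,D_0}(n)\bigr)$, extended by $1$ to $\Z/M\Z$ with $M$ a prime in $[CN',2CN']$.

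Second, your assertion that $\nu\gg 1$ pointwise is false for your formula: the Goldston--Y\i{}ld\i{}r\i{}m and Matthiesen majorants are normalised truncated divisor sums whose averages are $1+o(1)$ but which vanish or are very small on most integers, so an average of them alone does not dominate the constant function $1$. That domination is precisely what you need to conclude $\abs{r'_{f_0,b_0}-1}\leq r'_{f_0,b_0}+1\ll\nu$, and it is why the paper inserts the constant function $1$ as an extra summand in the average (hence the normalisation $1/(t+s+2)$). With these two corrections your outline matches the paper's proof: the pointwise dominations come from Lemmas \ref{nugt} and \ref{numatt}, the average $1+o(1)$ from Lemmas \ref{nugtave} and \ref{numatt}, the uniformity input is Proposition \ref{prop:uniform} applied with $k=t+s$, and the $D$-linear forms condition for $\nu^*$ --- including the cross-correlations between the two families of sieves, which you correctly identify as the heart of the matter --- is Proposition \ref{nustarlinforms}, deduced from Proposition \ref{linforms} in Appendix B together with the standard localisation argument.
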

The set $B=B_{t,s+1}$ was introduced in Definition~\ref{bts}.
Notice the slight twist of notation with respect to the original definition, due to the fact that our quadratic forms
are labelled $f_0,\ldots,f_{t+s+1}$.

We prove this theorem in the next section.

\section{Proof of Theorem \ref{reduction}}
\subsection{Generalised von Neumann theorem and uniformity}
Here and in the rest of the paper, $N'=N/\overline{W}$. To prove Theorem \ref{reduction}, we have to show that the average
along a linear system
of a product is $o(N'^d)$, knowing that one of the factor has average
$o(N'^d)$.
To do so, we reduce to a family of standard linear systems,
the ones which underlie the definition of Gowers norms
which we now introduce.
\begin{dfn}
Let $g:\Z\rightarrow\R$ be a function and $k\geq 1$ an integer.
The \emph{Gowers norm} or $U^k$ norm of $g$ on $[N]$ is the expression
$$\nor{g}_{U^k[N]}=\left(\E_{x\in[N]}\E_{h\in[N]^k}\prod_{\omega
\in\{0,1\}^k}g(x+\omega\cdot h)\right)^{2^{-k}}.$$
\end{dfn}
We need one more definition.
\begin{dfn}Let $D\geq 1$.
A $D$-\textit{pseudorandom measure} is a sequence of functions $\nu=\nu_M: \Z/M\Z\rightarrow \R_+$, satisfying\footnote{In earlier works such as \cite{GT2} or \cite{Matt},
there was a \emph{correlation condition}, but it is no longer necessary due to the work of Fox, Conlon and Zhao 
\cite{FCZ}, and its
integration by Tao and Ziegler \cite{TaoZieg2}.} 
\begin{enumerate}
\item $\E_{n\leq M}\nu(n)=1+o(1)$.
\item ($D$-linear forms conditions) Let $1\leq d,t\leq D$. For every finite-complexity system of affine-linear forms $\Psi : \Z^d\rightarrow \Z^t$ with coefficients bounded by $D$ and any convex set $K\subset [-M,M]^d$ such that $\Psi(K)\subset [M]^t$, the following estimate holds
\begin{equation}
\label{defeqlinforms}
\E_{n\in\Z^d\cap K}\prod \nu(\psi_i(n))=1+o(1).\end{equation}
\end{enumerate}
\end{dfn}
Pseudorandom measures are defined on cyclic groups
rather than intervals of integers, so the values of
the linear forms $\psi_i(n)$ are understood modulo $M$. 
Similarly, some authors prefer to define the Gowers norms on 
cyclic groups \cite[Appendix B]{GT2}, and then in intervals of integers by embedding
them in cyclic groups. However, the uniformity conditions and the von Neumann theorem
will also work well with the definition above.

On the other hand, the functions we want to majorise, of the form
$\Lambda'_{b,\overline{W}}-1$ and $r'_{f,b}-1$,
are naturally defined on an interval $[N']$.
When $N'\leq M$ are integers and $f : [N']\rightarrow \R$ is a function, we embed $[N']$ in $\Z/M\Z$ canonically
and extend the definition of $f$ to $\Z/M\Z$ by setting $f=0$ outside $[N']$.
Passing from intervals to cyclic groups involves some technicalities.
To avoid torsion
and wrap-around issues, we try to embed an interval into a sufficiently large
cyclic group of prime order


Following \cite[Proposition 7.1]{GT2} ,
we can now state the generalised von Neumann theorem.
\begin{trm}
\label{vonNeumann}
Let $t,d,L$ be positive integer parameters. Then there are positive constants $1\leq \Gamma$ and $D$, depending on $t,d$ and $L$ such that
the following holds. Let $C$ be a constant satisfying $\Gamma\leq C\leq O_{t,d,L}(1)$ be arbitrary and suppose that $M\in [CN',2CN']$ is a prime. Let $\nu : \Z/M\Z
\rightarrow \R^+$ be a $D$-pseudorandom measure, and suppose
that $f_1,\ldots,f_t : [N']\rightarrow\R$
are functions with $\abs{f_i(x)}\leq \nu (x)$
for all $i\in[t]$ and $x\in [N']$.
Suppose that $\Psi=(\psi_1,\ldots,\psi_t)$ is system of affine-linear forms of finite complexity whose linear coefficients are bounded by $L$. 
Let $K\subset [-M,M]^d$ be a convex set such that $\Psi(K)\subset [M]^t$.
Finally, suppose that 
\begin{equation}
\min_{1\leq j\leq t}\nor{f_j}_{U^{t-1}[N]}=o(1).
\label{uniformity}
\end{equation}
Then we have
$$
\E_{n\in K\cap\Z^d}\prod_{i\in[t]}f_i(\psi_i(n))=o(1).
$$
\end{trm}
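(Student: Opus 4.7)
The plan is to follow the iterated Cauchy--Schwarz strategy developed by Green and Tao in \cite{GT1,GT2}. Without loss of generality the minimum in \eqref{uniformity} is attained by $f_1$. The goal is to eliminate the forms $\psi_2,\ldots,\psi_t$ one at a time, each elimination costing one Cauchy--Schwarz that doubles a variable and leaves $f_1$ evaluated at two shifted arguments. After $t-1$ such steps, the factor involving $f_1$ assumes a $2^{t-1}$-fold tensor pattern that is precisely the $(t-1)$-fold Gowers inner product, while each $f_i$ for $i\geq 2$ will have been replaced by a product of $2^{t-1}$ copies of $\nu$, controlled by the $D$-linear forms condition.

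First I would extend each $f_i$ and $\nu$ by zero outside $[N']$ to functions on $\Z/M\Z$, using that $M$ is a prime slightly larger than $N'$ to avoid wrap-around issues, and absorb the indicator of $K\cap\Z^d$ into an averaging operator over $(\Z/M\Z)^d$ at the cost of an $O(1)$ multiplicative factor (this is where $M\in[CN',2CN']$ is used). Then, the finite complexity hypothesis ensures that for each $i\geq 2$ the linear parts $\dot\psi_1$ and $\dot\psi_i$ are not proportional; hence there is a direction $v_i\in\Z^d$ with $\dot\psi_i(v_i)=0$ and $\dot\psi_1(v_i)\neq 0$. Performing a change of variables so that $v_i$ becomes a coordinate axis and applying Cauchy--Schwarz in that direction eliminates $f_i$ (after majorising $|f_i|\leq \nu$ on one of the two duplicated copies) and doubles $f_1$.

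Iterating this procedure with fresh duplication variables $h_2,\ldots,h_t$, one obtains after $t-1$ steps an expression whose $2^{t-1}$-th power is dominated by a Gowers-type inner product of $f_1$ against itself, namely $\nor{f_1}_{U^{t-1}[N]}^{2^{t-1}}$, times a weight that averages $\nu$ over roughly $(t-1)2^{t-1}$ affine-linear forms in the enlarged variable set (of dimension $d + t - 1$).

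The main obstacle is showing that this weight factor averages to $1+o(1)$. The collection of forms appearing in it has cardinality bounded in terms of $t,d,L$, and its finite complexity follows from that of the original $\Psi$: one verifies that the new affine forms obtained by pairing the Cauchy--Schwarz cube variables $(\omega_2,\ldots,\omega_t)\in\{0,1\}^{t-1}$ with $\psi_2,\ldots,\psi_t$ remain pairwise affinely independent, exploiting the nonproportionality of linear parts inherited from $\Psi$. Applying the $D$-linear forms condition of $\nu$ --- where $D$ is chosen sufficiently large in terms of $t,d,L$ so as to accommodate the inflated dimension, the inflated number of forms, and the bound on their coefficients --- gives the required bound $1+o(1)$ for the weight. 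Combined with the hypothesis $\nor{f_1}_{U^{t-1}[N]}=o(1)$, this yields the desired conclusion after taking $2^{t-1}$-th roots.
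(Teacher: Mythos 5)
The paper does not prove this theorem at all: it is imported from Green and Tao \cite[Proposition 7.1]{GT2} (with the correlation condition removed, as permitted by the work of Conlon, Fox and Zhao), so your argument can only be measured against the standard proof in the literature. Your sketch is exactly that standard iterated Cauchy--Schwarz argument, and its overall architecture is sound: eliminate the forms one at a time, each elimination costing one Cauchy--Schwarz in a direction annihilating the form being removed, and control the resulting $\nu$-weights by the $D$-linear forms condition for a $D$ large enough to accommodate the inflated system.

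Three points are imprecise enough to flag. First, each Cauchy--Schwarz doubles the arguments of \emph{all} surviving functions, not only of $f_1$; the function $f_i$ eliminated at step $i$ already appears at $2^{i-2}$ shifted arguments, and it is all of these copies that are simultaneously majorised by $\nu$ (this works because they are all invariant under translation by $v_i$, since $\dot{\psi_i}(v_i)=0$). Your phrase ``leaves $f_1$ evaluated at two shifted arguments'' elides this bookkeeping, though your final $2^{t-1}$-fold pattern for $f_1$ is consistent with the correct count. Second, the last step is not merely that ``the weight factor averages to $1+o(1)$'': after the eliminations one holds a \emph{weighted} Gowers inner product $\E_{x,\mathbf{h}}\prod_{\omega}f_1(\cdots)W(x,\mathbf{h})$, and to replace it by the unweighted one (hence by $\nor{f_1}_{U^{t-1}}^{2^{t-1}}$) one must write $W=1+(W-1)$ and apply one further Cauchy--Schwarz together with the linear forms condition for the doubled system to show the $(W-1)$ contribution is $o(1)$. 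Third, the indicator $1_K$ cannot simply be ``absorbed at an $O(1)$ cost'' at the outset: it is not translation-invariant and travels through every Cauchy--Schwarz, which is precisely why the linear forms condition in the paper is formulated over arbitrary convex bodies; one must also implicitly assume $\Vol(K)\gg M^d$ for the normalisation to be harmless. None of these is a wrong turn, but each requires an argument that your sketch currently omits.
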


We highlight that this theorem actually replaces
a linear system $\Psi$ with another one, the system
$(x+\omega\cdot h)_{\omega\in\{0,1\}^{t-1}}$, so that
it is not immediately obvious that we have reduced the difficulty. However, it happens that uniformity with
respect to this system can be characterised in another way:
this is the inverse theorem for the Gowers norms \cite{GI}.
The following proposition provides the uniformity condition \eqref{uniformity} for our functions.
\begin{prop}\label{prop:uniform}
Let $f$ be a PDBQF, and $b\in [\overline{W}]$ be representable by $f$ modulo $\overline{W}$
and not divisible by any $p^{\iota(p)}$ for $p\leq w(N)$.
Then the tricked representation function of $f$ defined by
\eqref{Wtrickedrep}
satisfies
$$
\forall k\in\N\qquad \nor{r'_{f,b}-1}_{U^k[N']}=o(1).
$$
\end{prop}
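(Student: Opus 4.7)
My plan is to deduce the Gowers-norm estimate from a linear-correlation asymptotic for $r'_{f,b}$ along cube configurations, invoking Matthiesen's correlation formula \cite[Theorem 1.1]{Matt} (the $t=0$ case of Theorem~\ref{mytrm}) as a black box. Expanding via the cube,
$$
\|r'_{f,b} - 1\|_{U^k[N']}^{2^k} = \sum_{S \subseteq \{0,1\}^k} (-1)^{2^k - |S|} A_S, \qquad A_S := \E_{\substack{x \in [N']\\ h \in [N']^k}} \prod_{\omega \in S} r'_{f,b}(x + \omega \cdot h),
$$
and noting that $\sum_{S} (-1)^{2^k - |S|} = 0$, the proposition reduces to showing that $A_S = 1 + o(1)$ uniformly over $S \subseteq \{0,1\}^k$.

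For each nonempty $S$ the system $(x + \omega \cdot h)_{\omega \in S}$ on $\Z^{k+1}$ is of finite complexity, since distinct cube vertices $\omega$ have non-proportional linear parts. Substituting the definition~\eqref{Wtrickedrep} of $r'_{f,b}$ rewrites $A_S$ as a normalising constant times a correlation of $R_f$ along the system $(\overline{W}(x + \omega \cdot h) + b)_{\omega \in S}$. Applying Matthiesen's asymptotic via the reduction to residue classes modulo $\overline{W}$ carried out in Section~2.2 yields
$$
A_S = \left(\frac{\overline{W}}{\rho_{f,b}(\overline{W})}\right)^{|S|} \prod_p \beta_p(S) + o(1),
$$
where $\beta_p(S)$ is the local factor at $p$ of the $\overline{W}$-shifted cube system.

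For primes $p \leq w(N)$, the divisibility $p^{\iota(p)} \mid \overline{W}$ gives $\overline{W}(x+\omega\cdot h) + b \equiv b \pmod{p^{\iota(p)}}$, and the lift-invariance of Lemma~\ref{liftinvar} (applicable because $b \not\equiv 0 \pmod{p^{\iota(p)}}$, as guaranteed by $b \in B_{t,s+1}$) then implies that $\rho_{f, \overline{W}(x+\omega\cdot h) + b}(p^m)/p^m = \rho_{f,b}(p^{\iota(p)})/p^{\iota(p)}$ for every $m \geq \iota(p)$ and every $\omega$. Hence $\beta_p(S) = (\rho_{f,b}(p^{\iota(p)})/p^{\iota(p)})^{|S|}$, and by multiplicativity of $\rho_{f,b}$ in the modulus,
$$
\prod_{p \leq w(N)} \beta_p(S) = \left(\frac{\rho_{f,b}(\overline{W})}{\overline{W}}\right)^{|S|},
$$
which cancels the corresponding prefactor in $A_S$ exactly. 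For primes $p > w(N)$, Lemmas~A.2 and~A.3 of Appendix~A give $\beta_p(S) = 1 + O_k(p^{-2})$ with convergent tail product equal to $1 + o(1)$, so that $A_S = 1 + o(1)$, as required.

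The main obstacle is the applicability of Matthiesen's linear-correlation formula to the system $(\overline{W}(x + \omega \cdot h) + b)_{\omega\in S}$, whose linear coefficients scale with $\overline{W}$ and therefore exceed any fixed bound. This is handled exactly as in Section~2.2, by decomposing the $(x,h)$-sum into residue classes modulo $\overline{W}$ and applying the correlation estimate on each class to the resulting bounded-coefficient system. Once this reduction is in place, the small-prime local data is absorbed into the residue-class bookkeeping, and the remaining large-prime factors are controlled by the convergence lemmas of Appendix~A.
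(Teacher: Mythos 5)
There is a genuine gap here, and it is essentially a circularity. The paper does not reprove Proposition~\ref{prop:uniform}; it cites \cite[Sections 14--18]{Matt}, where the $U^k$-uniformity of $r'_{f,b}-1$ is established by computing its correlation with nilsequences and invoking the inverse theorem for the Gowers norms \cite{GI}. Your proposal instead tries to \emph{derive} the uniformity from Matthiesen's counting theorem \cite[Theorem 1.1]{Matt}. The problem is that the cube correlations $A_S$ you need are correlations of $R_f$ along the system $\bigl(\overline{W}(x+\omega\cdot h)+b\bigr)_{\omega\in S}$, whose linear coefficients have size $\overline{W}$, which tends to infinity with $N$. Theorem 1.1 (like Theorem~\ref{mytrm}) is stated only for systems with coefficients bounded by a fixed constant $L$, with an error term depending on $L$, so it does not apply to this system; equivalently, a restriction to a single residue class modulo $\overline{W}$ cannot be encoded as a convex body, so the black box cannot see the tricked function at all.

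Your proposed repair --- ``decompose the $(x,h)$-sum into residue classes modulo $\overline{W}$ as in Section~2.2'' --- does not close this gap, because that decomposition is precisely the $W$-trick: after performing it, the quantities one must estimate are correlations of the \emph{tricked} functions $r'_{f,c}$ along bounded-coefficient systems, i.e.\ the numbers $A_S$ themselves. In the paper (and in \cite{Matt}) those correlations are controlled by the generalised von Neumann theorem, whose hypothesis is exactly the Gowers-uniformity statement of Proposition~\ref{prop:uniform}; this is the whole point of the reduction to Theorem~\ref{reduction}. So the implication you want runs in the wrong direction: the counting theorem is a consequence of the uniformity, not a source for it. (Your local-factor bookkeeping for $p\le w(N)$ via Lemma~\ref{liftinvar} and for $p>w(N)$ via Appendix~A is fine as far as it goes, and there is a further, fixable, normalisation issue with the ranges $x+\omega\cdot h\notin[N']$ in the cube expansion, but neither of these is the essential obstruction.) The genuine content of the proposition is the nilsequence correlation estimate of \cite[Sections 14--18]{Matt} combined with the inverse theorem, and no argument avoiding that input appears in your proposal.
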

The proof of this proposition \cite[Sections 14-18]{Matt}
consists in evaluating the correlation of $r'_{f,b}-1$ with nilsequences.


\subsection{Construction of a pseudorandom majorant}
We now construct a pseudorandom measure which dominates both
the $\overline{W}$-tricked von Mangoldt and representation functions.
\subsubsection{The pseudorandom majorant of the von Mangoldt function.}
\label{majorantgt}
We first recall the pseudorandom majorant $\nu_{\mathrm{GT},b}(n)$ from the Green-Tao machinery, first used by Goldston and Y\i ld\i r\i m.
We already defined it in equation \eqref{lambdachir}. 
Green and Tao \cite[Lemma 9.7]{GT1} proved that it has average $1+o(1)$.
\begin{lm}
\label{nugtave}
If $\gamma$ is small enough,
for any $b\in[\overline{W}]$ coprime to $\overline{W}$, we have
\begin{align*}
\E_{n\in[N']}\nu_{\mathrm{GT},b}(n)
=1+o(1).
\end{align*}
\end{lm}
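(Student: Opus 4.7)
The plan is to expand the square defining $\Lambda_{\chi,R}$, swap expectation and summation, evaluate the inner expectation by basic counting, and then analyse the resulting Dirichlet-type sum via a Fourier/Mellin representation of $\chi$, exactly as in the Goldston--Y\i ld\i r\i m argument \cite[Appendix D]{GT2}. The only difference with \cite[Lemma 9.7]{GT1} is that $\overline{W}$ replaces $W$; since $\overline{W}\leq N^{\gamma/10}$ say (for $\gamma$ small enough relative to $C_1$ and $w$), none of the size estimates in that argument is affected.

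First I would write
\[
\E_{n\in[N']}\nu_{\mathrm{GT},b}(n)=\frac{\phi(W)}{W}\log R\sum_{\ell_1,\ell_2\leq R}\mu(\ell_1)\mu(\ell_2)\chi\!\left(\tfrac{\log\ell_1}{\log R}\right)\chi\!\left(\tfrac{\log\ell_2}{\log R}\right)\E_{n\in[N']}\mathbf{1}_{[\ell_1,\ell_2]\mid\overline{W}n+b}.
\]
Because $(b,\overline{W})=1$, the inner indicator vanishes unless $([\ell_1,\ell_2],\overline{W})=1$, in which case the inner expectation equals $1/[\ell_1,\ell_2]+O(1/N')$. Summing the $O(1/N')$ error over $\ell_1,\ell_2\leq R$ contributes at most $R^2\log R/N'\ll N^{2\gamma-1}\overline{W}\log N$, which is $o(1)$ for $\gamma$ small.

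Next I would handle the main Dirichlet-type sum
\[
S:=\sum_{\substack{\ell_1,\ell_2\\ ([\ell_1\ell_2],\overline{W})=1}}\frac{\mu(\ell_1)\mu(\ell_2)}{[\ell_1,\ell_2]}\chi\!\left(\tfrac{\log\ell_1}{\log R}\right)\chi\!\left(\tfrac{\log\ell_2}{\log R}\right)
\]
by inserting the Fourier expansion $\chi(u)=\int_{\mathbb{R}}\widehat{\chi}(\xi)e^{2\pi i u\xi}d\xi$, which turns the double sum into a double contour integral of an Euler product
\[
\prod_{p\nmid\overline{W}}\bigl(1-p^{-1+s_1}-p^{-1+s_2}+p^{-1+s_1+s_2}\bigr)=\frac{\zeta(1-s_1-s_2)}{\zeta(1-s_1)\zeta(1-s_2)}\,H(s_1,s_2;\overline{W}),
\]
where $H$ is holomorphic and non-zero in a fixed neighbourhood of the origin and satisfies $H(0,0;\overline{W})=W/\phi(W)+o(1)$ (using $\overline{W}$ has the same set of prime divisors as $W$). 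Shifting contours to pick up the simple pole of $\zeta(1-s_1-s_2)$ and using standard zero-free region estimates for $\zeta$ gives
\[
S=\frac{W}{\phi(W)}\cdot\frac{1}{\log R}\int_0^1\chi'(x)^2\,dx\;+\;o\!\left(\frac{1}{\log R}\right).
\]

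Finally, multiplying by $\frac{\phi(W)}{W}\log R$ and invoking the normalisation $\int_0^1\chi'(x)^2dx=1$ gives $1+o(1)$, as required.

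The main obstacle is verifying that the analytic estimates in the contour shift survive the replacement of $W$ (which was $O(\log N)$ in \cite{GT1}) by the much larger $\overline{W}$. The key point is that one only needs $\overline{W}=O(N^{\gamma'})$ for some small $\gamma'$ in order that the error term in the inner expectation and the local factor $H$ at the origin remain tame; since $\overline{W}\ll\exp((C_1+1)w\log\log N)$ and $w(N)=\log\log\log N$, this is amply satisfied. The rest is a line-by-line transcription of \cite[Lemma 9.7]{GT1} (see also \cite[Appendix D]{GT2}).
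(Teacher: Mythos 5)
Your proposal is correct and follows essentially the same route as the paper, which simply cites the Goldston--Y\i ld\i r\i m computation of Green and Tao (\cite[Lemma 9.7]{GT1}, see also \cite[Appendix D]{GT2}) and observes elsewhere that the only properties of the modulus used there are that it is $N^{o(1)}$ and that $\phi(\overline{W})/\overline{W}=\phi(W)/W$. You correctly isolate exactly these two points (the $O(R^2\log R/N')$ error and the value of the local factor $H$ at the origin) as the only places where replacing $W$ by the larger $\overline{W}$ could matter, so nothing further is needed.
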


\subsubsection{The pseudorandom majorant of the representation function}
\label{majorantmatt}
We also use a pseudorandom majorant from Matthiesen's work. 
For this we need to recall some notation and facts from \cite{Matt}.
For a set $\A$ of primes, $\langle\A\rangle$ stands for the set
of integers whose prime factors are all in $\A$,
and
$\tau_D(n)=\sum_{d\in \langle\P_D\rangle}1_{d\mid n}$.
\begin{prop}
For any integer $D\equiv 0,1\mod 4$, 
there exists a
set of primes $\P_D$ of density 1/2, which is a union of
congruence classes modulo $D$, such that putting
$\P_D^*=\P_D\cup\{p\in \P\colon p\mid D\}$ and
$\Q_D=\P\setminus\P_D^*$,
we have, for any PDBQF $f$ of discriminant $D$, the bound
$$
R_f(n)\ll_D \tau_D(n)\sum_{\substack{m\in \langle\Q_D\rangle\\
m^2\mid n}}1_{\langle\P_D^*\rangle }(n/m^2).
$$
\label{ReprMaj}
\end{prop}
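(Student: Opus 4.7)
The plan is to exploit the classical correspondence between positive definite binary quadratic forms of discriminant $D$ and invertible ideals of the imaginary quadratic order $\mathcal{O}_D$ of discriminant $D$. Let $\chi_D$ denote the Kronecker symbol attached to $D$; I would define $\P_D$ to be the set of primes $p$ with $\chi_D(p)=+1$, i.e.\ the split primes. Since $\chi_D$ is a Dirichlet character modulo $|D|$, the set $\P_D$ is a union of residue classes modulo $D$, and its density is exactly $1/2$ by Dirichlet's theorem on primes in arithmetic progressions. Then $\Q_D$ is precisely the set of inert primes ($\chi_D(p)=-1$), while the finite set $\P_D^*\setminus \P_D$ consists of the ramified primes dividing $D$.

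After reducing to the primitive case (a constant factor), the classical theory yields $R_f(n)\ll_D r_D(n)$, where $r_D(n)$ denotes the number of invertible ideals of $\mathcal{O}_D$ of norm $n$ and the implicit constant absorbs the number of units of $\mathcal{O}_D$ (at most $6$) as well as the class number. The arithmetic function $r_D$ is multiplicative, with
\[
r_D(p^a)=\begin{cases} a+1 & \text{if } p\in \P_D,\\ \mathbf{1}[a \text{ even}] & \text{if } p\in \Q_D,\\ O_D(1) & \text{if } p\mid D.\end{cases}
\]
Since there are only finitely many ramified primes, their total contribution can be absorbed into the implicit constant, yielding
\[
R_f(n)\ll_D \prod_{p\in \P_D}(v_p(n)+1)\cdot\prod_{q\in \Q_D}\mathbf{1}[2\mid v_q(n)].
\]

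It then remains to identify the right-hand side with the expression appearing in the proposition. The first product equals $\tau_D(n)$, since a divisor of $n$ lying in $\langle \P_D\rangle$ is specified by independently choosing an exponent $0\leq e_p\leq v_p(n)$ for each $p\in \P_D$ dividing $n$. For the second product, the inner sum in the target inequality counts integers $m\in\langle \Q_D\rangle$ with $m^2\mid n$ and $n/m^2\in\langle \P_D^*\rangle$; such an $m$ must satisfy $v_q(m)=v_q(n)/2$ for every $q\in \Q_D$ and must have $v_p(m)=0$ for every $p\notin \Q_D$, so such an $m$ exists (and is then unique) if and only if every $v_q(n)$ with $q\in \Q_D$ is even. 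Hence the inner sum equals $\prod_{q\in \Q_D}\mathbf{1}[2\mid v_q(n)]$, and combining with the preceding display recovers the claimed bound.

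The main technical obstacle is the behaviour at primes dividing the conductor when $\mathcal{O}_D$ is non-maximal (i.e.\ $D$ is not a fundamental discriminant): there one must restrict to \emph{invertible} ideals, and the splitting behaviour modulo the conductor is somewhat less transparent than in the maximal case. However, since only finitely many primes (the divisors of $D$) are affected and their contribution is absorbed into the $O_D(1)$ factor, this technicality does not change the final estimate.
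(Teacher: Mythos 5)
Your argument is correct and follows essentially the same route as the paper, which states this proposition as a fact imported from Matthiesen's work and justifies it only by the heuristic for $D=-4$: write the total representation number as $w\sum_{d\mid n}\chi_D(d)$ and factor it by multiplicativity into the split-prime factor $\tau_D(n)$ and the inert-prime ``even valuation'' condition, exactly as you do via the ideal-counting function $r_D$. The one point you assert rather than prove --- that the representation count of $p^a$ is bounded uniformly in $a$ for the finitely many primes $p\mid D$, in particular at conductor primes when $D$ is non-fundamental --- is true and is precisely where Matthiesen's analysis does the remaining work, so this is a fair deferral rather than a gap.
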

To understand this result heuristically, which is
the starting point of the construction of the pseudorandom
majorant in \cite{Matt}, we recall that the number of representations
of any odd number $n$ as a sum of two squares
is $4\sum_{d\mid n}\chi(d)$ where $\chi$ is the only nontrivial character modulo 4. By multiplicativity, this is easily seen
to become $\tau_4(n)\prod_{p\equiv 3\mod 4}1_{v_p(n)\equiv 0\mod 2}$, with $\P_4$ being the set of primes congruent to 1 modulo 4, from which we derive a majorant of the desired form. This works similarly for other quadratic forms.

Thus, to majorise the function $R_f$ it will be enough to
majorise the functions $\tau_D$ and
$1_{\langle\P_D^*\rangle}$. 
The heuristic to bound $\tau_D$ (or rather $\tau_D/\sqrt{\log N}$) is as follows (see \cite[Lemma 4.1]{Matt2}).
We would like to truncate the divisor sum defining it
at $N^{\gamma}$ (possibly with a smooth cut-off), just as was done
earlier for the von Mangoldt function. The function defined by this truncated divisor sum is called $\tau_{\gamma}$. Unfortunately, it turns out
that the inequality $\tau\leq C\tau_{\gamma}$ is not entirely true, at least not true with the same constant $C$
throughout the first $N$ integers. Nevertheless, a heuristic of Erd\H{o}s \cite{Erdos} says that an integer is either excessively rough
or excessively smooth or has a cluster of many prime factors close together. Moreover we have excluded the 
first two possibilities when we took out the set $X_0$, so it remains to majorise $\tau(n)$ in the third case. Then the
bound depends on the position of this cluster of primes and on its density. For more detail on the majorant of the
divisor function, see \cite{Matt2}.

To bound $1_{\langle\P_{D}^*\rangle}$ (or rather $1_{\langle\P_{D}^*\rangle}\sqrt{\log N}$), that is, the indicator
function of the integers without any prime factor belonging to  $\Q_D$,
we use a sieving-type majorant, that is, a majorant similar to the one introduced above for the von Mangoldt function.
Indeed, integers  without any prime factor in $\Q_D$ are similar to prime numbers (integers without any 
non-trivial prime factor at all).

To formalise this heuristic, let us introduce the
following definition.
\begin{dfn}
\label{setsU}
Let $\xi=2^{-m}$ for some $m\in\N$, put $\gamma=2\xi$.
We define sets $U(i,s)$ for integers $i,s$ as follows.
For $i=\log_2(2/\xi)-2=m-1$, we let $U(i,2/\xi)$ be $\{1\}$
and otherwise $U(i,2/\xi)=\emptyset$.
If $s>2/\xi$, write $U(i,s)$ for the set of all products of
$m_0(i,s)=\lceil\xi s(i+3-\log_2s)/100\rceil$ distinct primes
from the interval $[N^{2^{-i-1}},N^{2^{-i}}]$.
\end{dfn}
Let us fix an integer $D\equiv 0,1\mod 4$.
We now propose a majorant for the W-tricked representation function of a PDBQF of discriminant $D$, which was designed by Matthiesen \cite{Matt}. We again need the smooth function $\chi$ (this should not be mistaken with a character, as there are no more characters in the sequel) introduced
for the majorant of the von Mangoldt function.
We use the function
\begin{equation}
\label{rdgamma}
r_{D,\gamma}(n)=\frac{\beta'_{D,\gamma}(n)\nu'_{D,\gamma}(n)}{C_{D,\gamma}},
\end{equation}
where $$\nu'_{D,\gamma}=\sum_{s=2/\xi}^{(\log\log N)^3}\sum_{i=\log_2s-2}^{6\log\log\log N}\sum_{u\in U(i,s)}2^s1_{u\mid n}\tau'_{D,\gamma}(n),$$

\noindent with

$$\tau'_{D,\gamma}(n)=\sum_{\substack{d\in  \left\langle\P_D\right\rangle\\p\mid d\Rightarrow p>w(N)}}1_{d\mid n}\chi\left(\frac{\log d}{\log N^{\gamma}}\right),$$
and 
$$
\beta'_{\gamma}(n)=\sum_{\substack{m\in \left\langle\Q_D\right\rangle\\p\mid m\Rightarrow p>w(N)\\ m< N^{\gamma}}}\left(\sum_{\substack{e\in \left\langle\Q_D\right\rangle\\
p\mid e\Rightarrow p>w(N)}}1_{m^2e\mid n}\mu (e)\chi\left(\frac{\log e}{\log N^{\gamma}}\right)\right)^2.
$$
The constant $C_{D,\gamma}$ is the
one which ensures that the function $r_{D,\gamma}$ has average 1; the next lemma asserts the existence of such a constant.

We now define for any $b\in[\overline{W}]$ the function $\nu_{\mathrm{Matt},b,D}:[N']\rightarrow\R$ by
\begin{equation}
\nu_{\mathrm{Matt},b,D}(n)= r_{D,\gamma}(\overline{W}n+b).
\label{eq:defnumatt}
\end{equation}
The next lemma \cite[Lemma 7.5]{Matt} also asserts that
this function is a pseudorandom majorant for the
representation function of any PDBQF of discriminant $D$.
Recall Definition \ref{bts}.
\begin{lm}\label{numatt}
For any PDBQF $f$ of discriminant $D$ and $b\in[\overline{W}]$
satisfying $b\neq 0\mod p^{\iota(p)}$ for any
$p\leq w(N)$ and  $\rho_{f,b}(\overline{W})>0$,
the following bound holds
$$
r'_{f,b}(n)\ll \nu_{\mathrm{Matt},b,D}(n).
$$
Furthermore, for some $C_{D,\gamma}=O(1)$, we have
$\E_{n\in[N']}\nu_{\mathrm{Matt},b,D}(n)=1+o(1)$.
\end{lm}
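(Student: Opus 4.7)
The plan is to prove the two assertions separately: first the pointwise majorisation $r'_{f,b}(n) \ll \nu_{\mathrm{Matt},b,D}(n)$, then the normalisation $\E_{n\in[N']} \nu_{\mathrm{Matt},b,D}(n) = 1 + o(1)$. For the majorisation, the starting point is Proposition \ref{ReprMaj}, which reduces bounding $R_f(n)$ to controlling $\tau_D(n)$ and the sum over squares $m^2\mid n$ with $m\in\langle\Q_D\rangle$. Since $r'_{f,b}$ vanishes on $X_0$ by construction, I may restrict to integers $\overline{W}n+b\notin X_0$; in particular, condition (3) of Definition \ref{X0} forces every such $m$ to satisfy $m\leq N^{\gamma}$, matching the truncation built into $\beta'_{\gamma}$. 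For each fixed $m$, the inner sum over $e$ in $\beta'_{\gamma}$ is a Selberg-type sieve expression: after squaring it is nonnegative and is at least $1$ whenever $(\overline{W}n+b)/m^2\in\langle\P_D^*\rangle$ (only $e=1$ then contributes), so it dominates the inner sum in Proposition \ref{ReprMaj}. The remaining factor $\tau_D$ is majorised by $\nu'_{D,\gamma}$ via the Erd\H{o}s heuristic formalised in \cite[\S4]{Matt2}: having excluded the excessively rough and smooth integers through conditions (1)--(2) of Definition \ref{X0}, any residual $n$ for which $\tau_D(n)$ is not comparable to the smoothly truncated $\tau'_{D,\gamma}(n)$ must exhibit a cluster of primes concentrated in some dyadic range $[N^{2^{-i-1}},N^{2^{-i}}]$, and the combinatorial weight $2^s 1_{u\mid n}$ summed over $u\in U(i,s)$ compensates for this defect with the right multiplicity.

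For the average, I would expand $\nu_{\mathrm{Matt},b,D}(n) = C_{D,\gamma}^{-1}\beta'_{\gamma}(\overline{W}n+b)\nu'_{D,\gamma}(\overline{W}n+b)$ into a sum over $d\in\langle\P_D\rangle$, squarefree $e_1,e_2\in\langle\Q_D\rangle$, $m\in\langle\Q_D\rangle$ with $m<N^{\gamma}$ and $u\in U(i,s)$, constrained by $d, e_1 e_2 m^2$ and $u$ all dividing $\overline{W}n+b$. The inner average over $n\in[N']$ collapses each joint divisibility condition to density $1/\mathrm{lcm}(d, e_1 e_2 m^2, u) + O(N'^{-1})$; since all prime factors involved in $d, e_1, e_2, u$ exceed $w(N)$, they are coprime to $\overline{W}$, so the residue $b$ contributes only a trivial arithmetic factor. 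The resulting multiplicative expression factors as an Euler product over primes $p>w(N)$, whose local factors can be evaluated by a contour integral or Mellin transform exactly in the style of \cite[\S9]{GT1} and \cite[\S8]{Matt}. The constant $C_{D,\gamma}$ is chosen precisely so as to normalise the resulting expression to $1+o(1)$, and a direct inspection of this Euler product yields $C_{D,\gamma}=O_D(1)$.

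The main obstacle is the Erd\H{o}s-heuristic step of the majorisation: one must calibrate the parameters $m_0(i,s)$ defining $U(i,s)$ so that the combinatorial overhead $2^s$ suffices to dominate $\tau_D$ on integers with prime clusters, while keeping the sets $U(i,s)$ sparse enough that the final majorant has bounded average. This delicate balance is what forces the intricate formula of Definition \ref{setsU} and is the technical heart of \cite{Matt2}. Since the statement is precisely \cite[Lemma 7.5]{Matt}, the only additional verification needed is the compatibility with the enlarged modulus $\overline{W}$ in place of the standard $W$, which is unproblematic: $\overline{W}$ and $W$ have the same prime divisors, $\overline{W}\ll_A \log^A N$ for any $A>0$, and the local conditions at primes $p\leq w(N)$ ensuring $\rho_{f,b}(\overline{W})>0$ and $b\not\equiv 0 \pmod{p^{\iota(p)}}$ are built into the admissibility set $B_{t,s}$.
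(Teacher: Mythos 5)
The paper offers no proof of this lemma: it is quoted directly from Matthiesen \cite[Lemma 7.5]{Matt} (which is already stated there for the modulus $\overline{W}$ built from prime powers, so no adaptation is actually required), and your sketch is a faithful outline of her argument — Proposition \ref{ReprMaj}, the Selberg-square lower bound for the $e$-sum, the Erd\H{o}s-cluster majorisation of $\tau_D$, and the Euler-product evaluation of the average — so the approach is the intended one. One factual slip in your final paragraph: $\overline{W}$ is \emph{not} $\ll_A \log^A N$; the paper stresses the opposite, $\overline{W}\gg_A \log^A N$ for every $A$, and the enlarged modulus is harmless here only because the lattice-point and divisor-density estimates underlying the average of $\nu_{\mathrm{Matt},b,D}$ are uniform in moduli of size $N^{o(1)}$, in contrast with the Siegel--Walfisz input needed for the von Mangoldt majorant.
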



The crucial property of $\nu_{\mathrm{Matt}}$ is that
it is a truncated divisor sum, like $\nu_{\mathrm{GT}}$. Indeed, all divisors appearing
are constrained to be less than $R=N^{\gamma}$. It is obvious by definition of $\chi$
for the divisors called $d,m,e$ and less obvious, but proven by
Matthiesen, for $u$ (see Remark 3 following Proposition 4.2 in \cite{Matt}). Moreover, the divisors $d,m,e$ only have prime factors larger than $w(N)$ (this feature is also present in $\Lambda_{\chi,R}$), while $u$ has only prime factors larger than $N^{(\log\log N)^{-3}}$.
\subsubsection{Combination of both majorants}
\label{majorant}
To be able to use the von Neumann theorem (Theorem
\ref{vonNeumann}), and thus establish Theorem \ref{reduction}, we need to bound
all $t+s+1$ functions by the same majorant. Now each of them
is bounded individually by some pseudorandom majorant defined above,
so we define our common majorant by averaging all these majorants.
Recall that $N'=N/\overline{W}$ ; we take
$M$ to be a prime satisfying $N'<M\leq O(N')$.
Given a family $f_0,f_{t+1},\ldots, f_{t+s} $ of PDBQF of
discriminants $D_0,D_{t+1},\ldots,D_{t+s}$ and a family $(b_0,\ldots,b_{t+s})\in B$,
we define a function $\nu^*$ on $[N']\subset\Z/M\Z$ by
\begin{equation}
\label{eqdef:nustar}
\nu^*(n)=\frac{1}{t+s+2}(1+\sum_{i=1}^t\nu_{\mathrm{GT},b_i}(n)+\sum_{j=t+1}^{t+s}
     \nu_{\mathrm{Matt},b_j,D_j}(n)+\nu_{\mathrm{Matt},b_0,D_0}(n)).
\end{equation}
We extend it to $\Z/M\Z$ by setting $\nu^*(n)=1$ outside $[N']$.
%
Our strategy of forming a common majorant for a family of functions by averaging a family
of majorants is not 
really unheard of. In fact, Green and Tao \cite{GT2} 
had to combine
the majorants $n\mapsto \Lambda_{\chi,R}(\overline{W}n+b_j)$ for various $b_j$ and so did Matthiesen \cite{Matt}.
Notice also that L{\^e} and Wolf \cite{Wolf} devised
a certain condition of compatibility for
two pseudorandom majorants.
However, in our case the majorants have rather different origins. But they have a similar structure,
the structure of a truncated divisor sum, so that
the proof of the linear forms condition will
not be much harder than the ones in \cite{GT2} or
\cite{Matt}.

We observe that $\nu^*$ satisfies
$$
1+\sum_{i=1}^t\Lambda'_{b_i,\overline{W}}
+\sum_{j=t+1}^{t+s}r'_{f_j,b_j}+r'_{f_0,b_0}\ll\nu^*
$$
and has average $1+o(1)$
by Lemmas \ref{nugtave} and \ref{numatt}. So to ensure that $\nu^*$ is a pseudorandom measure, it remains only
to prove the linear forms condition \eqref{defeqlinforms}.
This is the content of the next proposition.
\begin{prop}\label{nustarlinforms}
Fix a constant $D>0$, and positive integers $t,s$. Then there exists a constant $C_0(D)$ such
that the following holds. 
For any  bounded $C\geq C_0(D)$
there exists $\gamma=\gamma(C,D)$ such that if $M\in [CN',2CN']$
is a prime, 
$b\in B_{t,s+1}$ and $f_0,f_{t+1},\ldots, f_{t+s}$
are PDBQF 
and $\nu^*$ is defined as above, then
$\nu^*$ satisfies the $D$-linear forms condition and
for any $i\in [t]$ we have 
$$ \Lambda'_{\overline{W},b_i}
\ll \nu^*.$$ 
Similarly, we have
$$\abs{r'_{f_0,b_0}-1}\ll \nu^*$$ and
for any $j\in\intnn{t+1}{t+s}$,
we have $$r'_{f_j,b_j}\ll \nu^*$$ 
where all inequalities are valid on $[N']$.
\end{prop}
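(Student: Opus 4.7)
The domination claims come essentially for free. Since $\nu^*$ is an average of $t+s+2$ nonnegative terms (one of which is the constant $1$), each individual summand satisfies $\nu_{\bullet}\leq (t+s+2)\nu^*$. Combining this observation with Lemma~\ref{nugt} gives $\Lambda'_{b_i,\overline{W}}\ll\nu^*$ for $i\in[t]$; with Lemma~\ref{numatt} it gives $r'_{f_j,b_j}\ll\nu^*$ for $j\in\intnn{t+1}{t+s}$; and for $|r'_{f_0,b_0}-1|$ we combine the Matthiesen majorisation of $r'_{f_0,b_0}$ by $\nu_{\mathrm{Matt},b_0,D_0}$ with the fact that the constant $1$ is also a summand of $(t+s+2)\nu^*$.

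The linear forms condition is the substantive part. The plan is to expand
$$\prod_{k=1}^{t'}\nu^*(\psi_k(n)) \;=\; (t+s+2)^{-t'}\sum_{\vec{\nu}}\prod_{k=1}^{t'}\nu^{(k)}(\psi_k(n))$$
by multilinearity, where each $\nu^{(k)}$ ranges over the finite list $\{1,\nu_{\mathrm{GT},b_i},\nu_{\mathrm{Matt},b_j,D_j}\}$. Since only $O_{t,s}(1)$ such mixed tuples $\vec{\nu}$ arise, it suffices to prove that each mixed average $\E_{n\in\Z^d\cap K}\prod_k\nu^{(k)}(\psi_k(n))$ equals $1+o(1)$. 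For a fixed tuple, I would substitute the truncated divisor sum definitions of the majorants into every factor, swap the orders of summation, and reduce the question to estimating, for each family of divisor tuples $(\mathbf{d}_k)_k$, the correlation
$$\E_{n\in\Z^d\cap K}\prod_{k}\mathbf{1}_{\mathbf{d}_k\mid\psi_k(n)}.$$
By the finite complexity of the system $\Psi$ and CRT, this correlation factors into local densities modulo prime powers --- exactly the objects studied in Appendix C --- and from there the analysis follows the template of Green and Tao \cite[Appendix D]{GT2} and Matthiesen \cite[Sections 9--10]{Matt}: a Mellin-type inversion combined with a contour shift, with $\gamma=\gamma(C,D)$ chosen small enough that the main term comes out to $1+o(1)$ and the remaining contours produce only negligible errors.

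The main obstacle will be the cross terms where factors of both types appear simultaneously, since $\nu_{\mathrm{GT}}$ is a squared Möbius-weighted divisor sum while $\nu_{\mathrm{Matt}}$ has a more layered structure involving a medium-size prime product $u$, a $\P_D$-divisor $d$, and a squared Möbius-weighted $\Q_D$-divisor sum built from $m,e$. What saves the day is the support structure: the splitting $\P_D\sqcup\Q_D$ forces most pairs of divisor variables to live on coprime sets of primes, and the restriction of all divisors to primes $>w(N)$ (respectively $>N^{(\log\log N)^{-3}}$ for the $u$ variables) keeps every local Euler factor at a prime $p$ in the range $1+O(p^{-2})$. The resulting Euler product therefore converges and is amenable to the same contour analysis as in the unmixed cases, so no genuinely new analytic input is needed beyond careful bookkeeping of the mixed divisor supports.

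Finally, one selects $C_0(D)$ large enough that the prime $M\in[CN',2CN']$ avoids wrap-around for all $\Psi$ with coefficients bounded by $D$, and $\gamma(C,D)$ small enough that the truncation level $N^{\gamma}$ is compatible with the contour shifts demanded in the worst cross term; the same $\gamma$ will also be compatible with the averages in Lemmas \ref{nugtave} and \ref{numatt}, so that the constant $C_{D,\gamma}$ in \eqref{rdgamma} is well-defined and bounded. This yields simultaneously the $D$-linear forms condition and the required domination bounds on $[N']$.
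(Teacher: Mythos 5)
Your proposal follows the paper's route: the domination bounds are read off from the construction of $\nu^*$ as an average of nonnegative terms together with Lemmas \ref{nugt} and \ref{numatt}, and the linear forms condition is reduced by multilinear expansion to mixed correlation estimates of exactly the type isolated in Proposition \ref{linforms}, whose proof (divisor-sum expansion, swapping sums, local divisor densities, Fourier/Euler-product analysis with the $\P_D\sqcup\Q_D$ splitting and the $p>w(N)$ restriction controlling the cross terms) you sketch in the same outline as Appendix B. The only step you pass over quickly --- and which the paper itself only cites rather than reproduces --- is the localisation argument needed because $\nu^*$ is extended by $1$ outside $[N']$ inside $\Z/M\Z$, so that a given convex body must be decomposed according to which forms land in $[N']$.
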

The inequalities have already been observed above. The
linear forms condition will follow from the following
proposition.
\begin{prop}
Let $1\leq d,t,s\leq D$, where $D$ is the constant appearing in
Theorem~\ref{vonNeumann}. For any finite complexity system
$\Psi : \Z^d\rightarrow\Z^{t+s}$ whose linear coefficients are bounded by $D$
and every convex $K\subset [0,N]^d$ such that $\Psi(K)\subset [1,N/\overline{W}]^t$, and any $b\in B$, the estimate
\begin{multline}
\E_{n\in\Z^d\cap K}
\prod_{j=t+1}^{t+s}r_{D_j,\gamma}(\overline{W}\psi_j(n)+b_j)\prod_{i\in[t]}
\frac{\phi(W)}{W}\Lambda_{\chi,R}(\overline{W}\psi_i(n)+b_i)
\\
=1+O_D\left(
\frac{N^{d-1+O_D(\gamma)}}{\Vol (K)}\right)+o_D(1)
\label{eqlinforms}
\end{multline}
holds, provided $\gamma$ is small enough.\label{linforms}
\end{prop}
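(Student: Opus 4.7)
The approach is to unfold all factors in the product as truncated divisor sums, evaluate the resulting expectation over $n\in\Z^d\cap K$ via equidistribution of the system $\Psi$ modulo the lcms of divisors, and reconstitute the outer sum as a convergent Euler product. This directly adapts the arguments of \cite[Appendix D]{GT2} and \cite{Matt} to the combined setting, exploiting the fact, emphasised at the end of Section~\ref{majorant}, that both individual majorants have the structure of a truncated divisor sum.

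More precisely, for each $i\in[t]$ one expands
\begin{equation*}
\Lambda_{\chi,R}(\overline{W}\psi_i(n)+b_i)=\log R\sum_{\ell_i,\ell'_i}\mu(\ell_i)\mu(\ell'_i)\chi\!\left(\tfrac{\log\ell_i}{\log R}\right)\chi\!\left(\tfrac{\log\ell'_i}{\log R}\right)1_{[\ell_i,\ell'_i]\mid \overline{W}\psi_i(n)+b_i},
\end{equation*}
and for each $j\in\intnn{t+1}{t+s}$ one expands $r_{D_j,\gamma}(\overline{W}\psi_j(n)+b_j)$ via \eqref{rdgamma}, introducing divisors $d_j\in\langle\P_{D_j}\rangle$ (from $\tau'_{D_j,\gamma}$), a square $m_j^2$ with $m_j\in\langle\Q_{D_j}\rangle$, square-free divisors $e_j,e'_j\in\langle\Q_{D_j}\rangle$ carrying $\mu(e_j)\mu(e'_j)$ factors (from $\beta'_{\gamma}$), and a cluster divisor $u_j\in U(i_j,s_j)$ weighted by $2^{s_j}/C_{D_j,\gamma}$. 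By construction, every divisor so introduced is coprime to $W$, bounded by $N^{O(\gamma)}$, and supported on one of the disjoint prime sets $\P_{D_j}^*$, $\Q_{D_j}$ or the specific dyadic ranges $[N^{2^{-i_j-1}},N^{2^{-i_j}}]$.

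After interchanging sums, the left-hand side of \eqref{eqlinforms} becomes a sum over divisor tuples $\mathbf{D}$, with explicit weight $w(\mathbf{D})$, of the expectation $\E_{n\in\Z^d\cap K}\prod_{k=1}^{t+s}1_{q_k\mid \overline{W}\psi_k(n)+b_k}$, where $q_k$ is the lcm of the divisors attached to the $k$-th form. Since $\prod_k q_k\ll N^{O(\gamma)}$ and $\Psi$ has bounded coefficients with finite complexity, standard lattice-point counting in $K$ evaluates this expectation as $\alpha(\mathbf{q})+O\!\left(N^{d-1}(\prod_k q_k)^{O_d(1)}/\Vol(K)\right)$, where $\alpha(\mathbf{q})$ is the local density from Appendix C applied to the shifted system at modulus $\lcm_k q_k$. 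Summing the error over all tuples yields the claimed $O_D(N^{d-1+O(\gamma)}/\Vol(K))$. For the main term, multiplicativity of $\alpha$ factors the remaining sum as an Euler product $\prod_p\mathcal{E}_p$, and one must show $\mathcal{E}_p=1+O(p^{-2})$ outside a summable bad set; this splits by cases according to whether $p$ divides $W$, lies in $\P_{D_j}$, lies in $\Q_{D_j}$, or divides some $D_j$, each case reducing by linear independence of the $\psi_k$ modulo $p$ to a product of one-factor local computations already carried out in \cite{GT2,Matt}, the constant $C_{D_j,\gamma}$ being precisely tuned to make the Matt-part of $\mathcal{E}_p$ equal to $1+O(p^{-2})$.

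The principal obstacle is the cluster contribution $\sum_{u\in U(i_j,s_j)}2^{s_j}1_{u|n}$, which is not a classical truncated divisor sum and requires the delicate dyadic analysis of \cite{Matt}. Two features render this tractable in the combined setting: first, $U(i_j,s_j)$ is supported on products of distinct primes from a single dyadic interval, so a given prime contributes to the cluster for at most one value of $(i_j,s_j)$ per index $j$ and its local contribution separates cleanly from the GT-type contribution at $p$; second, the range constraints $s_j\leq(\log\log N)^3$ and $i_j\leq 6\log\log\log N$ bound the total combinatorial complexity below any power of $N$. Choosing $\gamma$ small enough relative to $C_0(D)$ then absorbs the cumulative error in the stated bound and establishes \eqref{eqlinforms}.
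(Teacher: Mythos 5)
Your outline is faithful to the paper's architecture up to and including the local-density step: expanding every factor as a truncated divisor sum, evaluating $\E_{n\in\Z^d\cap K}\prod_k 1_{q_k\mid \overline{W}\psi_k(n)+b_k}$ as $\alpha(\mathbf{q})+O(N^{d-1}\cdot N^{O(\gamma)}/\Vol(K))$ via Lemma \ref{lm:localdens}, and summing the error over the $N^{O(\gamma)}$ tuples is exactly the paper's Claim 2. But the treatment of the main term has a genuine gap. You assert that ``multiplicativity of $\alpha$ factors the remaining sum as an Euler product $\prod_p\mathcal{E}_p$.'' While $\alpha$ is indeed multiplicative, the summand is not: each divisor variable carries a smooth cutoff $\chi(\log x/\log R)$, and $\chi(\log(xy)/\log R)\neq \chi(\log x/\log R)\chi(\log y/\log R)$. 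The sum over divisor tuples therefore does not factor over primes as written. This is precisely why the paper (following Green--Tao and Matthiesen) inserts the Fourier representation $\chi(\log x/\log R)=\int x^{-(1+i\xi)/\log R}\theta(\xi)\,d\xi$, so that the $x$-dependence becomes the completely multiplicative $x^{-z}$ and the Euler factorization becomes legitimate \emph{inside} the $\xi$-integral; Claims 3 and 6 of the paper exist solely to control the errors and the integrability that this device introduces. Your proposal contains no substitute for this dissociation step.

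Relatedly, the claim that $\mathcal{E}_p=1+O(p^{-2})$ (with $C_{D_j,\gamma}$ tuned to arrange this) cannot be right as a matter of bookkeeping: the left-hand side of \eqref{eqlinforms} carries an explicit prefactor $(\log R)^t$ from the $t$ copies of $\Lambda_{\chi,R}$, which must be cancelled by the divisor sums. In the correct analysis this cancellation comes from zeta-type asymptotics of the Euler products in the variables $z_{i,k}=(1+i\xi_{i,k})/\log R$, e.g. $\prod_p(1-p^{-1-z}-p^{-1-z'}+p^{-1-z-z'})\ll z z'/(z+z')\asymp (\log R)^{-1}$, not from each local factor being $1+O(p^{-2})$; if every Euler factor were $1+O(p^{-2})$ the total would be $O((\log R)^t)$, not $1+o(1)$. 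Finally, the coprimality issues you wave through (``separates cleanly'') are where the paper expends real effort: Claim 1 removes tuples with $\gcd(u_j,\phi_j(n)/u_j)>1$ using the fact that prime factors of $u_j$ exceed $N^{(\log\log N)^{-3}}$, while Claims 4 and 5 impose and then remove coprimality among $d_j,m_j,e_j,\ell_i$ at cost $1+O(w(N)^{-1/2})$ by a different argument, since those variables can have small prime factors; the paper even notes it is correcting an oversight in \cite{Matt} on exactly the interaction between $u_j$ and $d_jm_j^2\epsilon_j$. As it stands, your proof would not close without supplying the Fourier (or Mellin/contour) dissociation and redoing the logarithm accounting.
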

Notice that the $t$ and $s$ are not the same as in Proposition
\ref{nustarlinforms}. The proof is postponed to Appendix B.

Deriving the linear forms conditions for $\nu^*$ (Proposition
\ref{nustarlinforms}) from Proposition \ref{linforms} requires some extra work, because of
the piecewise definition of $\nu^*$.
This was done in \cite[Proposition 9.8]{GT1} for instance, but
see also \cite[Proposition 8.4]{FCZ}, where the same ``localisation argument" is employed. Matthiesen
also relies on it in \cite{Matt}. The argument does not need any modification, so we do not reproduce it
here and invite the reader to consult
one of the references.
We can now prove Theorem~\ref{reduction}.
\begin{proof}[Proof of Theorem~\ref{reduction} assuming Proposition \ref{nustarlinforms}]
Take any integers $d,t$ and $s$, and
a system $\Phi : \Z^d\rightarrow \Z^{t+s+1}$ of affine-linear
forms of finite complexity, where the coefficients
of the linear part are bounded by $L$ and take $f_0,f_{t+1},\ldots,f_{t+s}$
any PDBQF.
Take a convex set $K\subset [1,N']^d$ such that $\Phi(K)\subset [N']^{t+s+1}$. Let $b\in B$.
Then Proposition \ref{vonNeumann} and Proposition \ref{nustarlinforms} provide constants $C_0$ and $\Gamma$, of which we take the maximum $C=\max (C_0,\Gamma)$.
Now take a prime $M\in [CN',2CN']$. Such a prime exists by Bertrand's postulate. Define $\nu^*$ as above \eqref{eqdef:nustar}.
Define $F_0=r'_{f_0,b_0}-1$.
Put $F_i=\Lambda'_{b_i,\overline{W}}$ for $i\in[t]$ and
$F_j=r'_{f_j,b_j}$ for $j\in \{t+1,\ldots,t+s\}$.
Then we have that $\abs{F_j}\ll \nu^*$ for all $j\in\{0,\ldots,t+s\}$ and
$\nu^*$ is a pseudorandom measure by Proposition \ref{nustarlinforms}, so that we can invoke the von Neumann theorem (Theorem \ref{vonNeumann}). Together with the
statements of Proposition~\ref{prop:uniform} (specialised to $k=t+s$), it implies Theorem~\ref{reduction}.
\end{proof}
We remark that although we want to prove a result
concerning quadratic and not linear patterns in the primes,
we do not need the polynomial forms condition introduced in
\cite{TaoZieg}.
This is because the polynomial character of our configurations
is encapsulated in the representation functions
of the quadratic forms.

We have completed the proof our main theorem, conditionally
on the following rather technical appendices.
Appendix A provides estimates concerning the local factors
that were used in Section 2.
In Appendix B, we check the linear forms
condition for the majorant introduced above, that is, we
prove Proposition \ref{nustarlinforms}.
Appendix C provides elementary justifications to some statements
made in Appendices A and B.
\appendix
\section{Analysis of the local factors $\beta_p$}
\label{sec:locfact}
First, we check that the limit defining $\beta_p$ in Theorem \ref{mytrm} exists. 
We fix integers
$d,t,s\geq 1$ and a system of linear forms $\Psi : \Z^d\rightarrow \Z^{t+s}$
of finite complexity, and we suppose its
linear coefficients are bounded by $L$.

We also fix PDBQFs $f_{t+1},\ldots,f_{t+s}$ of discriminants $D_{t+1},\ldots,D_{t+s}$; 
these notions and the notation $\rho_{f_j}$
were defined in the introduction.
Let $p$ be a fixed prime
 and $m\geq 1$ an integer. For $a\in\Zp{m}{d}$, let
\begin{equation}
\label{pma}
 P_m(a)=\prod_{i=1}^t\Lambda_p(\psi_i(a))\prod_{j=t+1}^{t+s}
\frac{\rho_{f_j,\psi_j(a)}(p^m)}{p^m}.
\end{equation}
Finally, let $\beta_p(m)=\E_{a\in\Zp{m}{d}}P_m(a)$.
Thus we want to prove that $\beta_p(m)$ is convergent as
$m$ tends to $\infty$. This is a consequence of the following 
proposition
\begin{prop}
\label{prop:betapm}
The sequence $(\beta_p(m))_{m\in\N}$ is a Cauchy sequence.
More precisely, there exists $M_0=M_0(D_{t+1},\ldots,D_{t+s})$
so that for all integers $m_0\geq M_0$ and $m,n\geq m_0$, we have
$$\beta_p(m)-\beta_p(n)=O(m_0^sp^{-m_0/2})$$
\end{prop}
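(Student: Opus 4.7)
The plan is to exploit the lift-invariance property in Lemma \ref{liftinvar}, which ensures that for a PDBQF $f_j$ of discriminant $D_j$ and any residue $b$ not vanishing modulo $p^{\alpha}$ with $\alpha \geq v_p(D_j)$, the normalised density $\rho_{f_j,b}(p^n)/p^n$ is stationary in $n \geq \alpha$. I take $M_0$ to be any integer exceeding $\max_j v_p(D_j)$, so that for $m_0 \geq M_0$ the invariance applies whenever $\psi_j(a) \not\equiv 0 \pmod{p^{m_0}}$ for $j \in \{t+1,\ldots,t+s\}$. Without loss of generality assume $n \geq m \geq m_0$, and let $\pi : (\Z/p^n\Z)^d \to (\Z/p^{m_0}\Z)^d$ denote the canonical projection.

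Partition $(\Z/p^{m_0}\Z)^d$ into the \emph{good} set $G$ of residues $\bar{a}$ with $\psi_j(\bar{a}) \not\equiv 0 \pmod{p^{m_0}}$ for every $j$ in the quadratic range, and its complement $B$. For $a \in \pi^{-1}(G)$, Lemma \ref{liftinvar} collapses every factor $\rho_{f_j,\psi_j(a)}(p^n)/p^n$ to $\rho_{f_j,\psi_j(\bar{a})}(p^{m_0})/p^{m_0}$, while each $\Lambda_p(\psi_i(a))$ depends only on the reduction modulo $p$, hence only on $\bar{a}$. Consequently $P_n(a) = P_{m_0}(\bar{a})$ throughout $\pi^{-1}(G)$, and averaging fibrewise yields
$$
\E_{a \in (\Z/p^n\Z)^d} 1_G(\pi(a))\, P_n(a) = \E_{\bar{a} \in (\Z/p^{m_0}\Z)^d} 1_G(\bar{a})\, P_{m_0}(\bar{a}),
$$
an expression independent of $n \geq m_0$. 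The identical formula for $\beta_p(m)$ shows that the good contributions cancel in $\beta_p(n) - \beta_p(m)$, so it remains to bound the $B$-contribution to each of $\beta_p(n)$ and $\beta_p(m)$ separately.

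A union bound reduces this task to estimating $\E_a 1_{p^{m_0}\mid\psi_{j_0}(a)}\, P_n(a)$ for each $j_0 \in \{t+1,\ldots,t+s\}$. I would apply the Cauchy-Schwarz inequality: the first factor $(\Pr[p^{m_0}\mid\psi_{j_0}(a)])^{1/2}$ is $O(p^{-m_0/2})$ since $\psi_{j_0}$ is a non-zero affine-linear form with coefficients bounded by $L$, while the second factor $(\E_a P_n(a)^2)^{1/2}$ is controlled by $O(m_0^s)$ using the pointwise estimate $\Lambda_p = O(1)$, the inequality $\rho_{f_j,b}(p^n)/p^n \ll v_p(b)+1$ (a consequence of Lemma \ref{liftinvar} and a direct $p$-adic lattice count), and a Hölder decoupling of the $s$ quadratic factors. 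Summing over $j_0$ and subtracting the analogous estimate for $\beta_p(m)$ produces the claimed bound $|\beta_p(n) - \beta_p(m)| = O(m_0^s p^{-m_0/2})$.

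The principal obstacle is establishing the pointwise bound $\rho_{f_j,b}(p^n)/p^n \ll v_p(b)+1$ uniformly in $n$, together with control of the mild correlations among the $p$-adic valuations $v_p(\psi_j(a))$ for distinct $j$. Both points rest on the classical theory of representations of integers by positive definite binary quadratic forms modulo prime powers, combined with the finite-complexity hypothesis, which guarantees that any two of the linear forms $\psi_j$ are linearly independent modulo $p$ for all but finitely many primes, so that their joint $p$-adic valuations factorise to leading order.
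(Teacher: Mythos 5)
Your argument is correct and follows essentially the same route as the paper: the same choice of $M_0$ in terms of $\max_j v_p(D_j)$, the same decomposition of the residues according to whether some $\psi_j(a)$ vanishes modulo $p^{m_0}$, and the same use of the lift-invariance lemma to make the good-set contribution stationary in $n$. The only divergence is in the bookkeeping for the bad set: the paper expands $\rho_{f_j,\psi_j(a)}(p^n)/p^n \ll \sum_{k}1_{p^k\mid \psi_j(a)}$ and bounds the resulting local densities $\alpha_Z(p^{k_1},\ldots,p^{k_s})\ll p^{-\max_j k_j}$ directly, whereas you apply Cauchy--Schwarz and a second-moment estimate; both rest on exactly the same inputs (the valuation bound $\rho_{f,b}(p^n)/p^n\ll v_p(b)+1$, which is Matthiesen's Lemma 6.3 rather than an open obstacle, and $\Pr(p^k\mid\psi_j(a))\leq p^{-k}$ from Appendix C), and your variant in fact delivers the marginally cleaner bound $O_{s,t}(p^{-m_0/2})$.
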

To facilitate the proof of this proposition and the further analysis of local factors,
we ought to introduce a 
convenient notation present in both
\cite{GT2} and \cite{Matt}.
\begin{dfn}\label{def:locdens}
For a given system of affine-linear forms
$\Psi=(\psi_1,\ldots,\psi_t):\Z^d\rightarrow\Z^t$,
positive integers $d_1,\ldots,d_t$ of lcm $m$, define the local divisor density by
$$
\alpha_{\Psi}(d_1,\ldots,d_t)=\E_{n\in (\Z/m\Z)^d}\prod_{i=1}^t
1_{\psi_i(n)\equiv 0\mod d_i}.
$$
\end{dfn}
We now prove the proposition.
\begin{proof}
Let $M_0=\max_{j}v_p(D_j)$ and $m_0\geq M_0$.
Let $m,n\geq m_0$. 
We split $(\Z/p^m\Z)^d$
into two parts
$$
A_1=A_1(m,m_0)=\{a\in(\Z/p^m\Z)^d\mid \forall j \in \intnn{t+1}{t+s} \quad\psi_j(a)\neq 0\mod p^{m_0}\}$$
and
$$A_2=A_2(m,m_0)=\{a\in(\Z/p^m\Z)^d\mid\exists j \in \intnn{t+1}{t+s} \quad\psi_j(a)\equiv 0\mod p^{m_0}\}.
$$
Thus
\begin{equation}
\label{decompobetap}
\beta_p(m)=\E_{a\in\Zp{m}{d}}P(a)1_{A_1(m,m_0)}(a)+\E_{a\in\Zp{m}{d}}P(a)1_{A_2(m,m_0)}(a).
\end{equation}
For the first term, we use the lift-invariance property 
\cite[Corollary 6.4]{Matt} already stated in Lemma~\ref{liftinvar}.
It implies that
$$
\E_{a\in\Zp{m}{d}}P(a)1_{A_1(m,m_0)}=\E_{a\in\Zp{m_0}{d}}P(a)1_{A_1(m_0,m_0)}
$$
thus the first term on the right-hand side
of \eqref{decompobetap} does not depend on $m$.
For the second term, we invoke
the following
general 
bound from \cite{Matt} (see Lemma 6.3 and the proof of Lemma 8.2)
$$
\frac{\rho_{f_j,\psi_j(a)}(p^{m})}{p^{m}}
\ll \sum_{k=0}^m1_{\psi_j(a)\equiv 0\mod p^k}.
$$
We also use the trivial bound $\Lambda_p\leq 2$ to infer the
inequalities
\begin{align*}
P(a)1_{A_2}(a)&\ll 2^t1_{A_2}(a)\prod_{j=t+1}^{t+s}\sum_{k=0}^{m}1_{\psi_j(a)\equiv 0\mod p^k}\\
&\ll m_0^s\sum_{\substack{0\leq k_{t+1},\ldots, k_{t+s}\leq m\\\max k_i\geq m_0}}\prod_{j=t+1}^{t+s}1_{\psi_j(a)\equiv 0\mod p^{k_j}}.
\end{align*}
Here the factor $m_0^s$ appears as the number of $s$-tuples whose entries are all in $\intnn{0}{m_0-1}$; moreover, the
$2^t$ is merged with the implied constant, which crucially
remains independent of $m$ or $m_0$.
We then average over $a$ and let
\begin{equation}
\label{systemZ}
Z=(\zeta_1,\ldots,\zeta_s)=(\psi_{t+1},\ldots,\psi_{t+s})
\end{equation} be the system of the $s$ last linear forms of $\Psi$, obtaining
\begin{equation}
\label{eq:boundlocfac}
\E_{a\in\Zp{m}{d}}P(a)1_{A_2}(a)\ll m_0^s
\sum_{\substack{0\leq k_{1},\ldots, k_{s}\leq m\\M:=\max k_i\geq m_0}}\E_{a\in\Zp{M}{d}}\prod_{i=1}^s1_{p^k_i\mid\zeta_i}.
\end{equation}
We recognise the local density $\alpha_Z$ (see Definition \ref{def:locdens}) on the right hand-side, so we put
$$
\delta_p=\sum_{\substack{0\leq k_{1},\ldots, k_{s}\leq m\\M:=\max k_i\geq m_0}}\alpha_{Z}(p^{k_1},\ldots,p^{k_s}),
$$
enabling us to rewrite \eqref{eq:boundlocfac} as
$$
\E_{a\in\Zp{m}{d}}P(a)1_{A_2}(a)\ll m_0^s
\delta_p.
$$
Since the linear coefficients of $Z$ are bounded and none of its forms is the trivial form, we see that the maximal $k$ such that $\zeta_i$ is the trivial form modulo $p^k$ is bounded.
Remark~\ref{rmk:locdens} in Appendix C, where we collect a number of elementary justifications in order not to break the flow of the exposition here, implies a bound of the form
$$\alpha_{Z}(p^{k_1},\ldots,p^{k_s})\ll p^{-\max_j k_j},$$
and thus
$$
\delta_p\ll\sum_{\substack{0\leq k_{t+1},\ldots, k_{t+s}\leq m\\M:=\max k_i\geq m_0}}p^{-M}.
$$
Bounding the number of tuples
$(k_{1},\ldots,k_{s})$ satisfying $\max k_i=j$ crudely by $(j+1)^s$, we conclude that
\begin{align*}
\delta_p &\ll \sum_{j\geq m_0}p^{-j}j^s\\
&\ll\sum_{j\geq m_0}p^{-j/2}\\
&\ll_p p^{-m_0/2}.
\end{align*}
Finally, this means that for $m\geq m_0$, we have
$$\beta_p(m)=\E_{a\in\Zp{m_0}{d}}P(a)1_{A_1(m_0,m_0)}+O(m_0^sp^{-m_0/2}).$$
The same holds for $\beta_p(n)$,
hence
$$\beta_p(m)-\beta_p(n)=O(m_0^sp^{-m_0/2})$$
and the conclusion follows.
\end{proof}

\begin{lm}
\label{lm:locfac1}
Let $p$ be a prime. Then
$$\E_{a\in(\Z/p^{\iota(p)}\Z)^d}
\prod_{i=1}^t\Lambda_p(\psi_i(a))
\prod_{j=t+1}^{t+s}\frac{\rho_{f_j,\psi_j(a)}(p^{\iota(p)})}{p^{\iota(p)}}1_{\psi_j(a)\neq 0\mod
p^{\iota(p)}}=\beta_p+O(\log ^{-C_1/3}N)
$$
where $C_1$ is the constant appearing in \eqref{def:iota},  the definition of $\iota(p)$
\end{lm}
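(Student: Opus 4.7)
The plan is to deduce the lemma from Proposition \ref{prop:betapm} with essentially no new work. The left-hand side coincides with $\beta_p(\iota(p))$ except for the indicator restricting to $A_1(\iota(p),\iota(p))$ in the notation of that proof; writing $A_2 = (\Z/p^{\iota(p)}\Z)^d \setminus A_1$, the LHS equals $\beta_p(\iota(p)) - \E_{a} P_{\iota(p)}(a) 1_{A_2(\iota(p),\iota(p))}(a)$.

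First I would let $n \to \infty$ in Proposition \ref{prop:betapm} applied with $m_0 = \iota(p)$, noting that for $N$ sufficiently large one has $\iota(p) \geq M_0$ uniformly in $p$: indeed $M_0$ depends only on the discriminants, and $v_p(D_j) = 0$ for all but finitely many $p$. This yields $\beta_p(\iota(p)) = \beta_p + O(\iota(p)^s p^{-\iota(p)/2})$. Next, the argument that bounded the $A_2$-contribution in the proof of Proposition \ref{prop:betapm} (via the pointwise bound on $\rho_{f_j,\psi_j(a)}(p^m)/p^m$ and Remark \ref{rmk:locdens}) applies verbatim to give $\E_{a} P_{\iota(p)}(a) 1_{A_2(\iota(p),\iota(p))}(a) \ll \iota(p)^s p^{-\iota(p)/2}$ as well. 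Combining, the LHS equals $\beta_p + O(\iota(p)^s p^{-\iota(p)/2})$.

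Finally, I would convert this into the required $\log N$-scale error using the defining inequalities of $\iota(p)$. The upper bound $p^{\iota(p)} \geq \log^{C_1+1}N$ gives $p^{-\iota(p)/2} \leq \log^{-(C_1+1)/2}N$ uniformly in $p$; the lower bound $p^{\iota(p)-1} < \log^{C_1+1}N$ gives $\iota(p) \leq 1 + (C_1+1)\log\log N / \log p \ll \log\log N$ uniformly in $p \geq 2$. Hence $\iota(p)^s p^{-\iota(p)/2} \ll (\log\log N)^s \log^{-(C_1+1)/2}N$, and since $(C_1+1)/2$ exceeds $C_1/3$ by $(C_1+3)/6 > 0$, the $(\log\log N)^s$ factor is absorbed into $\log^{-C_1/3}N$ for $N$ large enough.

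I do not expect a serious obstacle: the whole argument is a bookkeeping reduction to Proposition \ref{prop:betapm}. The only minor care needed is to keep track of uniformity in $p$, both in the quantitative statement of Proposition \ref{prop:betapm} and in the bound on $\iota(p)$; both are clear from the estimates above.
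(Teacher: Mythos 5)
Your proposal is correct and follows essentially the same route as the paper, whose entire proof is ``apply the proof of Proposition \ref{prop:betapm} with $m_0=\max(\iota(p),M_0)$ and use $m_0^s p^{-m_0/2}\ll p^{-m_0/3}\leq \log^{-C_1/3}N$.'' Your version is if anything slightly cleaner on two minor points: you justify that $\iota(p)\geq M_0$ uniformly in $p$ for $N$ large (so one can take $m_0=\iota(p)$ directly, which is what the identification of the left-hand side with $\E_a P_{\iota(p)}(a)1_{A_1(\iota(p),\iota(p))}(a)$ actually requires), and you track the uniformity in $p$ of the error term explicitly.
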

\begin{proof}
We simply apply the proof of the above proposition with $m_0=\max(\iota(p),M_0)$. We use
$m_0^sp^{-m_0}\ll p^{-m_0/3}$, where the implied constant is
independent of $m_0$ and $p_0$.
This yields the desired result.
\end{proof}
We now analyse the behaviour of $\beta_p$ as $p$ tends to infinity.
\begin{lm}\label{lm:locfac2}
For primes $p$ tending to infinity,
$$
\beta_p=1+O(p^{-2}).
$$\end{lm}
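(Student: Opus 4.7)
The plan is to interpret $\beta_p$ as a $p$-adic integral, expand it around $1$, and use the mean-one property of each individual factor together with the finite-complexity hypothesis to obtain the desired cancellations.

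First, I would restrict to primes $p$ exceeding a threshold determined by $L, d, t, s$ and the $D_j$, so that: (i) each linear part $\dot\psi_k$ has a coefficient coprime to $p$ (requiring $p > L$); (ii) $p \nmid D_j$ for all $j \in \{t+1, \ldots, t+s\}$; (iii) no two $\dot\psi_k, \dot\psi_{k'}$ are proportional modulo $p$. Only finitely many primes fail these conditions, so this restriction does not affect the asymptotic. Under (ii), Lemma~\ref{liftinvar} guarantees that for each $k$ and each $a_* \in \Z_p^d$ with $\psi_k(a_*) \neq 0$, the sequence $\rho_{f_k, \psi_k(a_*)}(p^m)/p^m$ stabilises as $m \to \infty$, which allows us to write
\begin{equation*}
\beta_p = \int_{\Z_p^d} \prod_{k=1}^{t+s} F_k(a_*)\, d\mu(a_*),
\end{equation*}
where $F_k = \Lambda_p \circ \psi_k$ for $k \leq t$, $F_k(a_*) = \lim_m \rho_{f_k, \psi_k(a_*)}(p^m)/p^m$ for $k > t$, and $\mu$ is the normalised Haar measure. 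Each factor satisfies $\int F_k\, d\mu = 1$ exactly: immediately for $\Lambda_p$, and for the quadratic-form factor from the identity $\sum_n \rho_{f_k, n}(p^m) = p^{2m}$ combined with surjectivity of $\psi_k$ modulo $p^m$ (guaranteed by (i)).

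Expanding $\prod_k F_k = \prod_k(1 + (F_k - 1))$ and taking expectations,
\begin{equation*}
\beta_p - 1 = \sum_{\emptyset \neq S \subseteq \{1, \ldots, t+s\}} \int \prod_{k \in S}(F_k - 1)\, d\mu,
\end{equation*}
and the singleton terms vanish. For each $|S| \geq 2$, I decompose $F_k - 1 = G_k + H_k$, where $G_k$ is supported on $\{(p, \psi_k(a_*)) = 1\}$ and $H_k$ on $\{p \mid \psi_k(a_*)\}$. Using the classical formulae $\Lambda_p(n) - 1 = 1/(p-1)$ for $(n, p) = 1$ and $\rho_{f_j, n}(p^m)/p^m = 1 - \chi_{D_j}(p)/p$ for $(n, p) = 1$ and $p \nmid D_j$ (the latter obtained by Hensel lifting the $p - \chi_{D_j}(p)$ affine $\F_p$-points on the conic $f_j = n$), we have $|G_k| \ll 1/p$ pointwise; meanwhile $H_k$ is supported on a set of $\mu$-measure $1/p$ and satisfies $\int |H_k|\, d\mu \ll 1/p$, using the uniform bound $\rho_{f, n}(p^m)/p^m \ll v_p(n) + 1$ together with the geometric decay $\mu(\{v_p(\psi_k) \geq v\}) = p^{-v}$.

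Expanding $\prod_{k \in S}(G_k + H_k)$ and indexing by the subset $T \subseteq S$ tracking which factors contribute $H_k$, each resulting term is bounded by $p^{-r(T)} \cdot p^{-(|S| - |T|)}$, where $r(T)$ denotes the $\F_p$-rank of $\{\dot\psi_k : k \in T\}$. Hypothesis (iii) together with $T \subseteq S$ gives $r(T) \geq \min(|T|, 2)$, and a short case check yields $r(T) + (|S| - |T|) \geq 2$ whenever $|S| \geq 2$. Each term is therefore $O(1/p^2)$, and summing over the finite family of $(S, T)$ yields $\beta_p - 1 = O(1/p^2)$. The main obstacle is the uniform-in-$m$ control of $H_k$, since $\rho_{f, n}(p^m)/p^m$ does grow linearly in $v_p(n)$; however, the geometric decay of the Haar measure of $\{v_p(\psi_k) \geq v\}$ dominates this growth, making the $L^1$ bound on $H_k$ routine.
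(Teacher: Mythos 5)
Your proposal is correct, and it organises the argument differently from the paper. The paper substitutes Matthiesen's explicit formula $\rho_{f,\beta}(p^m)/p^m=(1-\chi_D(p)p^{-1})\sum_k 1_{p^k\mid\beta}\chi_D(p^k)$, expands the product over $j$ according to whether zero, one, or at least two exponents are nonzero, and then explicitly evaluates the first two classes of terms (using the Green--Tao estimate $\E_a\prod_i\Lambda_p(\psi_i(a))=1+O(p^{-2})$ and a Hensel-type count for the single-divisibility terms) so that they recombine with the Euler prefactor $\prod_j(1-\chi_{D_j}(p)p^{-1})$ to give $1+O(p^{-2})$. You instead exploit that each factor has \emph{exact} mean $1$ over a full residue system (by surjectivity of $\psi_k$ modulo $p^m$ once $p>L$), so that in $\prod_k(1+(F_k-1))$ every singleton term vanishes identically and no explicit evaluation is needed; what remains are correlation terms of order at least two, each $O(p^{-2})$ by the pointwise bound $|F_k-1|\ll p^{-1}$ off $\{p\mid\psi_k\}$ together with the smallness of the joint zero sets. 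This is cleaner and sidesteps the bookkeeping with $\chi_{D_j}$. Both arguments ultimately rest on the same nontrivial input, namely Proposition \ref{prop:locbounds}: for your term with $|T|\geq 2$, the bound $\int\prod_{k\in T}|H_k|\,d\mu\ll p^{-r(T)}$ does \emph{not} follow from the support having measure $p^{-r(T)}$, because $|H_k|\ll v_p(\psi_k)+1$ is unbounded there; you need the valuation-refined joint estimate $\alpha_Z(p^{v_1},\ldots)\leq p^{-\max_{i\neq j}(v_i+v_j)}$ and then the summation $\sum_{v_k\geq 1}\prod_k(v_k+1)\,p^{-1-\max v_k}\ll p^{-2}$, exactly as in the paper's treatment of its ``at least two $k_i>0$'' term. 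Your closing remark addresses only the single-form $L^1$ bound, so this joint step should be made explicit; with that, the proof is complete. (One further small point: the identity $\int F_k\,d\mu=1$ for $k>t$ requires interchanging the limit in $m$ with the integral, which is justified by dominated convergence using the same majorant $v_p(\psi_k)+1$, or avoided entirely by running the whole expansion at finite level $m$ uniformly.)
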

Thus the product of the $\beta_p$ is convergent 
and
$$
\prod_{p\leq w(N)}\beta_p=\left(1+O\left(\frac{1}{w(N)}\right)\right)\prod_p\beta_p.
$$

\begin{proof}
Assume $p$ is large enough so that $p$ does not divide the product $D_{t+1}\cdots D_{t+s}$
of the (negative) discriminants of our quadratic forms.

Recall the notation $P(a)=P_m(a)$ from \eqref{pma} and the sets
$A_1=A_1(m,m)$ and $A_2=A_2(m,m)$ introduced during the proof of
Proposition~\ref{prop:betapm}.
As $m$ tends to $\infty$, we have
\begin{align*}
\beta_p +o(1) &= \E_{a\in (\Z/p^m\Z)^d}
P(a)\\
&=
\E_{a\in\Zp{m}{d}}P(a)1_{A_1}(a)+\E_{a\in\Zp{m}{d}}P(a)1_{A_2}(a)\\
&=\frac{1}{p^md}\sum_{a\in A_1}P(a)+
2^t O(sm^sp^{-m}).
\end{align*}
To get this error term, we used Corollary \ref{cor:localdens} and
the triangle inequality to bound $\abs{A_2}$, and the fact 
that $\rho_{f_j,\beta}(p^m)/p^m\ll m$ \cite[Lemma 6.3(c)]{Matt} to bound $P(a)$.
This error term tends to 0 as $m$ tends to infinity, and thus merges with the $o(1)$ of the left-hand side. 
Let us now consider the main term. Thanks to the choice of $p$ and the fact that the forms do not vanish at $a$ mod $p^m$, we can use 
Lemma 6.3 from \cite{Matt} which states that if $f$ is a PDBQF
of discriminant $D$, and if $p$ is a prime which does not divide $D$, and if $\beta\neq 0\mod p^m$, then
$$
\frac{\rho_{f,\beta}(p^m)}{p^m}=(1-\chi_D(p)p^{-1})\sum_{k=0}^m 1_{p^k\mid m}\chi_D(p^k).
$$
Here $\chi_D$ is a real character modulo $p$, namely the Kronecker symbol \cite[Lemma 2.1]{Matt}.
Thus
$$\beta_p=\lim_{m\rightarrow\infty}\E_{a\in(\Z/p^m\Z)^d}
\prod_{i=1}^t
\Lambda_p(\psi_i(a))\prod_{j=t+1}^{t+s}\left(1-\chi_{D_j}(p)p^{-1}\right)\sum_{k=0}^m1_{p^k\mid\psi_j(a)}\chi_{D_j}(p^k)$$
where we have obviously reintegrated the once excluded $a\in A_2$, because their sparsity ensures that they do not affect the limit.
For $a\in(\Z/p^m\Z)^d$, we then write $a=a'+pb$ with 
$b\in(\Z/p^{m-1}\Z)^d$ and $a'\in[p]^d$. Thus the average
$\E_a$ becomes 
\begin{equation}
\prod_{j=t+1}^{t+s}\left(1-\chi_{D_j}(p)p^{-1}\right)
\E_{a\in[p]^d}\prod_{i=1}^t
\Lambda_p(\psi_i(a))\E_{b\in(\Z/p^{m-1}\Z)^d}\prod_{j=t+1}^{t+s}
\sum_{k=0}^m 1_{p^k\mid\psi_j(a+pb)}\chi_{D_j}(p^k).
\label{bigterm}
\end{equation}
We expand the product of sums as follows
\begin{equation*}
\begin{split}
&\prod_{j=t+1}^{t+s}
\sum_{k=0}^m 1_{p^k\mid\psi_j(a+pb)}\chi_{D_j}(p^k)\\
&=
1+\sum_{j}\sum_{k_j=1}^m1_{p^{k_j}\mid\psi_j(a+pb)}\chi_{D_j}(p^{k_j})
+\sum_{\substack{0\leq k_{t+1},\ldots,k_{t+s}\leq m\\\text{ at least two }k_i>0}}\prod 1_{p^{k_j}\mid\psi_j(a+pb)}\chi_{D_j}(p^{k_j})
\end{split}
\end{equation*}
according to whether we take no, one or several nonzero $k$.
The expectation over $a$ from \eqref{bigterm} then splits into three terms.
The first one is
$$
\E_{a\in(\Z/p\Z)^d}\prod_{i=1}^t
\Lambda_p(\psi_i(a)),
$$
and he second one is
\begin{equation}
\sum_{j=t+1}^{t+s}\sum_{k_j=1}^m\chi_{D_j}(p^{k_j})\E_{a\in[p]^d}\prod_{i=1}^t
\Lambda_p(\psi_i(a))
\E_{b\in(\Z/p^{m-1}\Z)^d}1_{p^{k_j}\mid\psi_j(a+pb)}.
\label{second}
\end{equation}
Now we decompose
$\psi_j(a+pb)=\psi_j(a)+p\dot{\psi_j}(b)$, where
$\dot{\psi}$ is the linear part of $\psi$.
If $p^{k_j}$ is to divide $\psi_j(a)+p\dot{\psi_j}(b)$,
we need $p\mid\psi_j(a)$. 
Thus we can write, for each such $a$ fixed, $\psi_j(a+pb)=p\tilde{\psi_j}(b)$, where $\tilde{\psi_j}$ is 
again an affine-linear form whose linear part is $\dot{\psi_j}$.
We then need $p^{k_j-1}\mid\tilde{\psi_j}(b)$.
Because of Corollary \ref{prop:locbounds},
$$\E_{b\in(\Z/p^{m-1}\Z)^d}1_{p^{k_j-1}\mid
\tilde{\psi_j}(b)}=p^{-k_j+1}$$ 
so  the expression \eqref{second} equals
$$
\sum_{j=t+1}^{t+s}\sum_{k_j=1}^m\chi_{D_j}(p^{k_j})p^{-k_j}\E_{a\in[p]^d}\prod_{i=1}^t
\Lambda_p(\psi_i(a))p1_{p\mid\psi_j(a)}
$$
To deal with the last term,
which is
\begin{equation}
\E_{a\in(\Z/p^m\Z)^d}\prod_{i=1}^t\Lambda_p(\psi_i(a))
\sum_{\substack{0\leq k_{t+1},\ldots,k_{t+s}\leq m\\\text{ at least two }k_i>0}}\prod_{j=t+1}^{t+s} 1_{p^{k_j}\mid\psi_j(a)}\chi_{D_j}(p^{k_j}),
\label{error}
\end{equation}
we crudely bound $\Lambda_p$ by $2$ and $\chi_{D_j}$ by 1.
Recall the notation $Z$ from \eqref{systemZ}. Thus as $m$ tends
to infinity, the expression \eqref{error} is bounded above by
a constant times
$$O\left(\sum_{\substack{k_{1},\ldots,k_{s}\\\text{ at least two }k_i>0}}\alpha_Z(p^{k_{1}},\ldots,p^{k_{s}})\right)
$$
To bound this expression, we remember that $Z$ is a system of finite complexity. This implies, thanks to Proposition
\ref{prop:locbounds}, that for $p$ large enough\footnote{We need $p$ to be large because for
some small $p$, there could be two forms that, though affinely
independent, become dependent when reduced modulo $p$. Such primes are called \emph{exceptional}.
The same need for large $p$ will appear again later.
}
depending on $s,d,L$, we have
$$
\alpha_Z(p^{k_{1}},\ldots,p^{k_{s}})\leq p^{-\max_{i\neq j}(k_i+k_j)}
\leq p^{-1-\max(k_i)}
$$
whenever at least two $k_i$ are nonzero.
For any $k\geq 1$, there are at most $s(k+1)^{s-1}$ $s$-tuples
that satisfy $\max k_i=k$.
Thus
$$
\sum_{\substack{k_{1},\ldots,k_{s}\\\text{ at least two }k_i>0}}\alpha_Z(p^{k_1},\ldots,p^{k_s})=O(\sum_{k\geq 1}sk^{s-1}p^{-k-1})=O_s(p^{-2}).
$$

Putting these three terms together and letting $m$ tend to
infinity, we get
\begin{multline}
\beta_p=\prod_{j=t+1}^{t+s}\left(1-\chi_{D_j}(p)p^{-1}\right)
\left(\E_{a\in [p]^d}\prod_{i=1}^{t}\Lambda_p(\psi_i(a))\right.\\
+\left.\sum_{j=t+1}^{t+s}
\E_{a\in [p]^d}p1_{\psi_j(a)=0}\prod_{i=1}^{t}\Lambda_p(\psi_i(a))
\sum_{k=1}^{+\infty}\chi_{D_j}(p^k)p^{-k}\right)+O_{s,t}(p^{-2})
\label{betap}
\end{multline}
Green and Tao \cite[Lemma 1.3]{GT2} proved that $\E_{a\in[p]^d}\prod_{i=1}^t
\Lambda_p(\psi_i(a))=1+O_t(p^{-2})$.
Similarly, for any $j\in\intnn{t+1}{t+s}$, we have
\begin{align*}
\E_{a\in [p]^d}p1_{p\mid\psi_j(a)}\prod_{i=1}^{t}\Lambda_p(\psi_i(a))
&=p\left(\frac{p}{p-1}\right)^t\Pr((\prod_{i=1}^t
\psi_i(a),p)=1)\text{ and }p\mid\psi_j(a))\\
&=1+O(p^{-2})
\end{align*}
because the probability is $p^{-1}(1-t/p+O(p^{-2}))$ by linear
independence.
Moreover,
$$
\prod_{j=t+1}^{t+s}\left(1-\chi_{D_j}(p)p^{-1}\right)
\left(1+\sum_{j=t+1}^{t+s}\sum_{k_j>0}\chi_{D_j}(p^{k_j})p^{-k_j}\right)=1+O_s(p^{-2})
$$
so that finally, plugging these estimates in \eqref{betap},
we obtain
$$
\beta_p=
\prod_{j=t+1}^{t+s}\left(1-\chi_{D_j}(p)p^{-1}\right)
\left(1+\sum_{j=t+1}^{t+s}\sum_{k_j>0}\chi_{D_j}(p^{k_j})p^{-k_j}+O_{s,t}(p^{-2})\right)=1+O(p^{-2}).
$$
Here the implied constant depends
on $t,d,s,L$ and the discriminants only. This last equation is exactly the claimed result.
\end{proof}

\section{Verification of the linear forms condition} 
This section is dedicated to the lengthy and technical
proof of Proposition \ref{linforms}, that is, the verification
that our majorant, introduced in Subsection \ref{majorant},
satisfies the linear forms condition.
We loosely follow Matthiesen's proof in \cite{Matt},
taking inspiration of the more recent paper \cite{MattBr}. However,
there is some flaw there, as the author overlooked the possibility 
that $u$ and $dm^2\epsilon$ may not be coprime; we provide,
based on the earlier paper \cite{Matt2}, a corrected version of these computations.

Compared to Matthiesens's articles, the introduction of the majorant for the von Mangoldt function
adds factors of $\log R$ which will be cancelled during the Fourier
transformation step. It also adds factors of $\frac{\phi(W)}{W}$ which 
remain untouched throughout the proof. And in the core
of the calculation, it adds to the variables $d,m,e,u$ another variable $\ell$ also ranging among the integers whose prime factors are all greater than $w(N)$, which shall interact nicely with the other ones. The aim of the game is 
to dissociate the factors, that is, to transform the average
of the product into the product of averages.

\textbf{Notational conventions for the proof}. In order to somewhat lighten the formidable notation, we will not
always specify the range on sums, products or integrals. In
principle, the name of the variable alone should tell the reader 
what its range is. We list a few important conventions. 
\begin{itemize}
\item The integer vector $n$ will always range in $\Z^d\cap K$. 
\item We put $\phi_j(n)=\overline{W}\psi_j(n)+b_j$, for
$j\in[t+s]$, where $b_j$ lies in the set $B_{t,s}$ defined in Definition~\ref{bts}. 
\item For $i=1,\ldots,t$ and $k=1,2$, $\ell_{i,k}$ is a positive integer. Because it will always be a divisor of $\phi_i(n)$ which satisfies $\phi_i(n)\equiv b_i\mod W$ and $(b_i,W)=1$ by definition of
$B$, the prime factors of $\ell_{i,k}$ are all greater than $w(N)$.
\item For $j=t+1,\ldots,t+s$ and $k=1,2$, $e_{j,k}$ is a positive integer 
in $\langle\Q_{k}\rangle$,  where $\Q_k=\Q_{D_k}$.
All its prime factors are greater than $w(N)$.
\item For $j=t+1,\ldots, t+s$,
$s_j$ will range from $2/\gamma$ to $(\log\log N)^3$ and
$i_j$ from $\log_2s-2$ to $6\log\log\log N$, while $u_j$ ranges in $U(s_j,i_j)$. The $s_j$ should 
not be confused
with $s$, the number of factors of the form $\nu_{\mathrm{Matt},b}$. Notice that $i$ is also the standard
name of the index ranging in $[t]$ but this should not cause any ambiguity.
\item Occasionally we may want to write $e_j$ for $e_{j,1}$ and
$e_j'=e_{j,2}$ ; similarly $\ell_i=\ell_{i,1}$ and $\ell_i'=\ell_{i,2}$.
Moreover $\epsilon_j$ will be the least common multiple (lcm) of $e_j$ and $e_j'$,
while $\lambda_i$ will be the lcm of $\ell_i$ and $\ell_i'$.
\item For $j=t+1,\ldots, t+s$, the integer $d_j$ only has prime factors greater than $w(N)$ and lying in
$\P_{j}$ where  $\P_j=\P_{D_j}$.
\item For $j=t+1,\ldots, t+s$, the integer $m_j$ only has prime factors greater than $w(N)$ and lying in
$\Q_{j}$.
\item A bold character denotes a vector; thus $\mathbf{e}=(e_{j,k})_{\substack{j\in\intnn{t+1}{t+s}\\k=1,2}}$ and again
the range of such indices $i,k$ will frequently be omitted.
Unfortunately, the symbol $\ell$ cannot be boldfaced, but this should not cause any ambiguity.
\end{itemize}
With these conventions, 
recalling the definitions \eqref{lambdachir}
of $\Lambda_{\chi,R}$ and \eqref{rdgamma} of $r_{D,\gamma}$,
we expand the left-hand side of \eqref{eqlinforms} as
\begin{equation}\label{expanded}
\begin{split}
&\Omega= H
\E_{n\in\Z^d\cap K}\prod_{i\in[t]}
\sum_{\ell_{i},\ell_i'}\mu(\ell_i)\mu(\ell_i')
\chi \left(\frac{\log \ell_i}{\log R}\right)\chi \left(\frac{\log \ell'_i}{\log R}\right)1_{\lambda_i\mid\phi_i(n)}\\
&\prod_{j=t+1}^{t+s}\sum_{s_j,i_j,u_j}2^{s_j}1_{u_j\mid\phi_j(n)}
\sum_{d_j,m_j,e_j,e_j'}1_{d_jm_j^2\epsilon_j\mid \phi_j(n)}\mu(e_j)\mu(e_j')
\chi\left(\frac{\log e_j}{\log R}\right)\chi \left(\frac{\log e_j'}{\log R}\right)\chi \left(\frac{\log d_j}{\log R}\right)
\chi \left(\frac{\log m_j}{\log R}\right).
\end{split}
\end{equation}
The initial factor $H$ is defined by $$H=\left(\log R\frac{\phi(W)}{W}\right)^t\prod_{j=t+1}^{t+s}C_{D_j,\gamma}^{-1}.$$
Proving Proposition~\ref{linforms}
means proving that
$$
\Omega=1+O_D\left(
\frac{N^{d-1+O_D(\gamma)}}{\Vol (K)}\right)+o_D(1).
$$

Write $\Omega=H\Omega'$.
Notice that $H=\Omega/\Omega'=O((\log R)^t)
=O((\log N)^t)$.
We now work on $\Omega'$. It is an average over $n$ of $t+s$ products, and we aim at transforming
it into a product of $t+s$ averages. We will remember to multiply the error terms obtained
during the transformation of this average by $(\log N)^t$
to obtain error terms for $S$.

We observe that when $u_j,d_j,m_j,e_j,e_j'$ 
divide $\phi_j(n)$ and
$u_j$ satisfies \mbox{$\gcd(u_j,\phi_j(n)/u_j)=1$}, there exists,
for $x$ equal to any of the symbols $e,e',d,m$, a unique decomposition \begin{equation}
\label{vjx}
x_j=\tilde{x_j}v_{j,x}
\quad \text{with}\quad\gcd(\tilde{x_j},u_j)=1\quad \text{and} \quad v_{j,x}\mid u_j.
\end{equation}
We would very much like to perform this decomposition, but not every term satisfies the required coprimality condition.
However, the following claim shows that we can pretend it does at a small cost.

\noindent\textbf{Claim 1.}
The summands in \eqref{expanded} satisfying 
\mbox{$\gcd(u_j,\phi_j(n)/u_j)>1$} for some $j$
or \mbox{$\gcd(u_j,u_i)>1$} for some $i\neq j$ contribute only $O(N^{-(\log\log N)^{-3}/8})$.

\begin{proof}
We write the contribution of these summands as
$$S=H\sum_{\mathbf{i},\mathbf{s}}\prod_{j=t+1}^{t+s}2^{s_j}
\E_na_n,$$
where
\begin{align*}a_n=a_{n,\mathbf{i},\mathbf{s}}
=\sum_{\mathbf{u}}1_{\substack{\exists j\mid \gcd(u_j,\phi_j(n)/u_j)>1\\
\text{or }\exists i\neq j\mid \gcd(u_j,u_i)>1}}
&\sum_{\mathbf{d},\mathbf{m},\mathbf{e},\mathbf{\ell}}
\prod_{i=1}^t\mu(\ell_i)\mu(\ell_i')
\chi \left(\frac{\log \ell_i}{\log R}\right)\chi \left(\frac{\log \ell'_i}{\log R}\right)1_{\lambda_i\mid\phi_i(n)}\\
&\prod_{j=t+1}^{t+s}\mu(d_j)\mu(\ell_j)\mu(\ell_j')\chi\left(\frac{\log e_i}{\log R}\right)\chi\left(\frac{\log e_i'}{\log R}\right)1_{\Delta_j\mid\phi_j(n)}
\end{align*}
with the notation $\Delta_j=\gcd(u_j,d_jm_j^2e_j)$.
To bound $\E_n a_n$, we
apply the simple rule, based on Cauchy-Schwarz, that
$$
\left(\E_{n\in\Z^d\cap K}a_n\right)^2\leq \Pr_n(a_n\neq 0)\E_n a_n^2.
$$
Now if $a_n\neq 0$ then either the value of one of the $s$ 
last linear forms $\phi_i(n)$ has a repeated prime factor, or 
the values of
two of the $s$ last linear forms have a common prime factor. Such a prime $p$ is a factor of some
$u_i$, which, by Definition~\ref{setsU}, only has prime factors larger than $N^{1/(\log\log N)^3}$ and satisfies $u_i\leq N^{\gamma}$ (see
\cite[Proposition 4.2]{Matt2}). Thus $p$ 
certainly lies between $N^{1/(\log\log N)^3}$ and $N^{\gamma}$.
Using the triangle inequality, we get
$$\Pr_n(a_n\neq 0)\leq \sum_{N^{1/(\log\log N)^3}\leq p\leq N^{\gamma}}\Pr_n(p^2\mid\prod_{i=1}^{t+s}\phi_i(n)).$$
Moreover, the primes $p$ in this range are not exceptional primes, i.e. primes modulo which the linear forms are affinely dependent. Indeed, such primes, thanks to the $W$-trick and the fact that no two of the original linear forms $\psi_i$ are affinely dependent, are all $O(w(N))=O(\log\log N)$. 
Thus $$\Pr_n(p^2\mid\prod_i\phi_i(n))\ll p^{-2}+O\left(\frac{N'^{d-1}}{\Vol(K)}\right)=p^{-2}+O\left(p^2\frac{N^{d-1}}{\Vol(K)}\right),$$
according to Proposition \ref{pcarre}
and the fact that $\abs{K\cap \Z^d}\sim\Vol(K)$.\footnote{Here, we assume that
$\Vol(K)\gg N'^{d}$ or at least that $N'^{d-1}=o(\Vol(K))$. Indeed, in the statement of the main theorem, we could also
add the assumption that $\Vol(K)\gg N^d$ because otherwise the error term is not smaller than the main term.} Hence
\begin{align*}
\Pr(a_n\neq 0)&\leq \sum_{N^{1/(\log\log N)^3}\leq p\leq N^{\gamma}}\Pr(p^2\mid\prod_i\phi_i(n))\\
&\ll\sum_{p\geq N^{1/(\log\log N)^3}}p^{-2}+\frac{N^{d-1}}{\Vol(K)}\sum_{p\leq N^{\gamma}}p^2\\
&\ll N^{-1/(\log\log N)^3}
+\frac{N^{3\gamma+d-1}}{\Vol(K)}.
\end{align*}
Assuming that $\gamma$ is small enough (less than 1/3), the second term is $O(N^{-c})$ with $c>0$ so it is negligible 
with respect to the first one.

We then bound $\E_na_n^2$
quite crudely as follows 
\begin{equation*}
\begin{split}
\E_na_n^2 &\leq \E_n\left(\sum_{\mathbf{d},\mathbf{m},\mathbf{e},\mathbf{\ell},\mathbf{u}}\prod_{i=1}^t
1_{\lambda_i\mid\phi_i(n)}
\prod_{j=t+1}^{t+s}1_{\Delta_j\mid\phi_j(n)}\right)^2\\
&\ll \prod_{i=1}^t\left(\E_n\left(\sum_{\ell_i,\ell_i'}1_{\lambda_i\mid\phi_i(n)}\right)^{2(t+s)}\right) ^{1/(t+s)}
\prod_{j=t+1}^{t+s}\left(\E_n\left(\sum_{d_j,m_j,e_j,e_j',u_j}1_{\Delta_j\mid\phi_j(n)}\right)^{2(t+s)}
\right)^{1/(t+s)}\\
&\ll (\log N)^{O_{t,s}(1)}.\end{split}\end{equation*}
The first inequality is a consequence of the fact that $\abs{\mu}\leq 1$ and $\abs{\chi}\leq 1$.
The second one is H\"{o}lder's.
The last one follows from 
bounds of Matthiesen \cite[Lemma 3.1]{Matt2} on
moments of the divisor function,
and the observation that for instance
$\sum_{\ell_i,\ell_i'}1_{\lambda_i\mid\phi_i(n)}\leq \tau(\phi(n))^2$.
Thus $\abs{\E_na_n}\ll N^{-(\log\log N)^{-3}/4} $.
Summing now over $\mathbf{i},\mathbf{s}$
and multiplying by $H$, we get 
$\abs{S}\leq N^{-(\log\log N)^{-3}/8} $ as desired. This concludes the proof of Claim 1.
\end{proof}

Thus to evaluate \eqref{expanded}, we shall pretend all summands satisfy the coprimality condition,
transform them under this hypothesis, and then reintegrate 
the formerly excluded terms, which generates an error term of size 
$O(N^{-(\log\log N)^{-3}/8})$. 
So \eqref{expanded} is equal to

\begin{equation}\label{newclone}
\begin{split}
&\sum_{\mathbf{i},\mathbf{s}}\sum'_{\mathbf{u}}
\E_n
\prod_{i\in[t],k=1,2}\sum_{\ell_{i,k}}\mu(\ell_{i,k})\chi\left(\frac{\log \ell_{i,k}}{\log R}\right) 1_{\lambda_i\mid \phi_i(n)}
\prod_{j=t+1}^{t+s}2^{s_j}\sum_{d_j,e_j,e_j',m_j\text{ coprime to }u_j}\\
&\sum_{\substack{v_{j,d},v_{j,m},v_{j,e},v_{j,e'}\\\text{ divisors of }u_j}}
\prod_{x_j\in\{d_j,e_j,e_j',m_j\}}\chi\left(\frac{\log x_jv_{j,x}}{\log R}\right)\mu(e_jv_{j,e})\mu(e_jv_{j,e'}) 
1_{u_jd_j\epsilon_jm_j^2\mid\phi_j(n)}
+O(N^{-(\log\log N)^{-3}/8})
\end{split}
\end{equation}
where the dashed sum indicates a sum over vectors whose entries are coprime.

Above and from now on, the vectors $\mathbf{d,e,m}$ are assumed to be entrywise
coprime to the vector $\mathbf{u}$.
Thus we can perform the decomposition \eqref{vjx}.
The vector $\mathbf{v}$ stands for
$(v_{j,x})_{x\in\{d,e,e',m\},j\in\intn{t+1}{t+s}}$ where we impose for every $j$ the conditions
$v_{j,x}\mid u_j$ and $v_{j,d}\in \left\langle \P_j\right\rangle,v_{j,m}\in \left\langle \Q_j\right\rangle,v_{j,e}\in \left\langle \P_j\right\rangle$.
Furthermore, we shall use the notation
$$
q_j=\left\lbrace \begin{array}{cc}
\lambda_j & \text{ if } j\in[t]\\ 
u_jd_j\epsilon_jm_j^2 & \text{ if }
j\in\intnn{t+1}{t+s}.
\end{array} \right.
$$

\noindent\textbf{Claim 2.}
The main term of \eqref{newclone} is equal to
\begin{equation}\label{newcltwo}
\begin{split}
&\sum_{\mathbf{i},\mathbf{s}}\sum_{\mathbf{u}}
\sum_{\mathbf{d,e,m,\ell}}
\alpha(q_1,\ldots,q_{t+s})\\
&\sum_{\mathbf{v}}
\prod_{i\in[t],k=1,2}\mu(\ell_{i,k})\chi\left(\frac{\log \ell_{i,k}}{\log R}\right)
\prod_{j\in\intn{t+1}{t+s}}\frac{2^{s_j}}{u_j}\mu(e_jv_{j,e})\mu(e_jv_{j,e'}) 
\prod_{x_j\in\{d_j,e_j,e_j',m_j\}}\chi\left(\frac{\log x_jv_{j,x}}{\log R}\right)
\end{split}
\end{equation}
up to an error of size $O\left(N^{d-1+O(\gamma)}/\Vol(K)\right)$.

We remark that this error term, after multiplication by the initial factor $H=O((\log N)^t)$, is still of the same magnitude.
\begin{proof}
First, we apply Lemma \ref{lm:localdens}
$$
\E_{n\in\Z^d\cap K}\prod_{i=1}^{t}
1_{\lambda_i\mid\phi_i(n)}\prod_{j=t+1}^{t+s}
1_{u_jd_jm_j^2\epsilon_j\mid\phi_j(n)}=\alpha(q_1,\ldots,
q_{t+s})+O(N^{d-1+O(\gamma)}/\Vol (K)).
$$
To explain the error term, observe that
for any set of tuples bringing a nonzero contribution, 
for any $j\in [t+s]$, we have  $q_j=N^{O(\gamma)}$
because $d_j,m_j,e_j,e_j'\leq N^{\gamma}$  and $u_j\leq N^{\gamma}$. 
To bound the contribution of this error term to the sum defining the main term of \eqref{newclone},
we simply notice that the number of terms is $O(N^{\gamma})$ anyway, that the $\mu$ and $\chi$ factors
are 1-bounded, and that $2^{s_j}$ is always $o(N^{\gamma})$ because $s\leq (\log\log N)^3$.

Notice that we can also exclude summands for which 
$\gcd(\lambda_i,u_j)>1$ for some
$i\in[t]$ and $j\in\intn{t+1}{t+s}$ because of the very same argument as in Claim 1. For summands satisfying to
the contrary $\gcd(\lambda_i,u_j)=1$, by multiplicativity of
$\alpha$ and because of the other implicit coprimality conditions, we can write
$$\alpha(q_1,\ldots,q_{t+s})
=\frac{\alpha((\lambda_i)_{i\in[t]},(d_jm_j^2\epsilon_j)_{j\in\intn{t+1}{t+s}})}{\prod_ju_j}
.$$
This concludes the proof of this claim with a dashed sum on $u$ instead of the normal sum,
and a sum on  $\mathbf{\ell}$ restricted to tuples satisfying $\gcd(\lambda_i,u_j)>1$ for all $i$ and $j$.
We can reintegrate now the formerly excluded terms
because they have a negligible contribution anyway, so Claim 2 is proven.
\end{proof}
Having removed the terms $u_j$ from the local density 
$\alpha$, we reset the definition of $q_j$ as follows
$$
q_j=\left\lbrace \begin{array}{cc}
\lambda_j & \text{ if } j\in[t]\\ 
d_j\epsilon_jm_j^2 & \text{ if }
j\in\intnn{t+1}{t+s}.
\end{array} \right.
$$
From now on, we fix vectors $\mathbf{i,s}$ in their usual ranges, and consider the individual terms

\begin{equation}\label{beforechi}
\begin{split}
&\sum_{\mathbf{u}}
\sum_{\mathbf{d,e,m,\ell}}
\alpha(q_1,\ldots,q_{t+s})\prod_{i\in[t],k=1,2}\mu(\ell_{i,k})\chi\left(\frac{\log \ell_{i,k}}{\log R}\right)\\
&\sum_{\mathbf{v}}
\prod_{j\in\intn{t+1}{t+s}}\frac{2^{s_j}}{u_j}\mu(e_jv_{j,e})\mu(e_j'v_{j,e'}) 
\prod_{x_j\in\{d_j,e_j,e_j',m_j\}}\chi\left(\frac{\log x_jv_{j,x}}{\log R}\right)
\end{split}
\end{equation}
We now use the Fourier transform. 
Letting $\theta$ be the Fourier transform of the smooth compactly supported function $x\mapsto e^x\chi (x)$,
it is well known that  
\begin{equation}\label{fourier}
\forall A>0
\qquad\theta (\xi)\ll_A (1+\abs{\xi})^{-A}.
\end{equation}
This allows us to reconstruct $\chi$ from $\theta$ as
an integral over the compact interval\footnote{We prefer integrating over a compact set, in order to 
be able to easily swap summation and integration using
Fubini's theorem.}
$$I=\{\xi\in\R\mid
\abs{\xi}\leq \log^{1/2} R\}$$ 
at the cost of a tolerable error; more precisely, for
any $A>0$, we have

\begin{equation}
\label{chiintegral}
\begin{split}
\chi\left(\frac{\log x}{\log R}\right) &=\int_{\R} x^{-\frac{1+i\xi}{\log R}}\theta(\xi)d\xi\\
&=\int_Ix^{-\frac{1+i\xi}{\log R}}\theta(\xi)d\xi + O(x^{-\frac{1}{\log R}}\log^{-A} R).
\end{split}
\end{equation}

When plugging this into our sum, we need $4s+2t$ real variables
$\xi_{j,k}$ with $k=1,\ldots,4$ for $j=t+1,\ldots, t+s$ and $k=1,2$ for $j=1,\ldots,t$. 
Collectively, they form the vector $\mathbf{\Xi}$.
Furthermore, we write $z_{j,k}=(1+i\xi_{j,k})/(\log R)$. 
We sometimes allow, for a function $f$, 
the slight abuse of notation
$$\prod_{j,k}f(\xi_{j,k})=\prod_{i\in[t],k\in[2]}f(\xi_{i,k})
\prod_{j\in[t+s]\setminus[t],k\in[4]}f(\xi_{j,k}),$$
and write
$$
\theta(\mathbf{\Xi})=\prod_{j,k}\theta(\xi_{j,k}).
$$
We introduce the notation
$\tilde{x}_j=x_jv_{j,x}$ for $x$ equal to any of the symbols $e,e',d,m$,
and $\mathbf{v_i}=(v_{i,d},v_{i,e},v_{i,e'},v_{i,m})$.
For any fixed values of the tuples
$\mathbf{s,i,u,v,d,m,e,\ell}$ we write
$$
M=\prod_{i\in[t],k=1,2}\mu(\ell_{i,k})\prod_{j\in\intn{t+1}{t+s}}\frac{2^{s_j}}{u_j}\mu(e_jv_{j,e})\mu(e_j'v_{j,e'}). 
$$
Finally, we introduce
\begin{equation}
\label{FXi}
F_{\mathbf{d,m,e,\ell}}(\mathbf{\Xi})=F(\mathbf{\Xi})=\theta(\mathbf{\Xi})\prod_{j>t}\tilde{e}_{j,1}^{-z_{j,1}}\tilde{e}_{j,2}^{-z_{j,2}}\tilde{d}_j^{-z_{j,3}}
\tilde{m}_j^{-z_{j,4}}
\prod_{i\in[t]} \ell_{i,1}^{-z_{i,1}}\ell_{i,2}^{-z_{i,2}}.
\end{equation}
We now insert \eqref{chiintegral} into the expression \eqref{beforechi} to get
\begin{equation*}
\sum_{\mathbf{d},\mathbf{m},\mathbf{e},\mathbf{\ell}}\alpha(q_1,\ldots,q_{t+s})\sum_{\mathbf{u,v}}M
\Big(
\int_{I^{4s+2t}}F(\mathbf{\Xi})d\mathbf{\Xi}+
O((\log R)^{-A}(\prod_{i,j,k}\tilde{e}_{j,k}\ell_{i,k}\tilde{d_j}\tilde{m}_j)^{-1/\log R})\Big).
\end{equation*}

Here the term arising from the big oh will not matter too much,
thanks to the following claim.

\noindent
\textbf{Claim 3.} For $A>0$ large enough,
$$
H\sum_{\mathbf{s},\mathbf{i}}\sum_{\mathbf{u,v}} \sum_{\mathbf{d,m,e,\ell}}\alpha(q_1,\ldots,q_{t+s})M
\log^{-A}R\left(\prod_{i\in[t],k=1,2}\ell_{i,k}\prod_{j>t}
\tilde{e}_{j,k}\tilde{m}_j\tilde{d}_j\right)^{-1/\log R}=o(1).
$$

\begin{proof}
Matthiesen \cite[Proposition 4.2]{Matt2} showed that
$$
\sum_{\mathbf{s},\mathbf{i}}\prod_{j=t+1}^{t+s}\sum_{u_j\in U(s_j,i_j)}\frac{2^{s_j}}{u_j}=O(1).
$$

On the other hand, we can suppress the sum over $\mathbf{v}$ by reintegrating into the sum over $\mathbf{d,m,e}$
the summands not termwise coprime to $\mathbf{u}$.
We can then drop the $\tilde{\cdot}$ on the variables.
We put $q_j'=\ell_j\ell_j'$ for $j\in[t]$ and $q_j=e_je_j'd_jm_j$ for $j\in [t+s]\setminus [t]$.
By multiplicativity,
\begin{equation*}
\begin{split}
\sum_{\mathbf{d},\mathbf{m},\mathbf{e},\mathbf{\ell}}\alpha(q_1,\ldots,q_{t+s})
\left(\prod_{\substack{i\in[t]\\k=1,2}}\ell_{i,k}\prod_{j>t}
e_{j,k}m_jd_j\right)^{-\frac{1}{\log R}}
&=\sum_{\mathbf{d,m,e,\ell}}\prod_{p^{a_i}\Vert q_i}\alpha(p^{a_1},\ldots,p^{a_{t+s}})\prod_{\substack{j\in[t+s]\\p^{a_j'}\Vert q_j'}}p^{-\frac{a_j'}{\log R}}\\
&\leq \sum_{\mathbf{d,m,e,\ell}}\prod_{p^{a_i}\Vert q_i}p^{-\max a_i(1+2\log^{-1}R)}\\
&\leq\prod_{p}(1-p^{-(1+2\log^{-1}R)})^{-O(t+s)}\\
&\ll \log^{O(t+s)}N.
\end{split}
\end{equation*}
Here we used $a_j'\geq a_j/2$, Corollary~\ref{cor:localdens} and a crude bound $k^{O(t+s)}$ for the number of tuples $a_i$ satisfying $\max_ia_i=k$.
The last inequality follows from a well-known estimate for
the Zeta function near 1, namely
$$\zeta(x)=O\left(\frac{1}{x-1}\right).$$
Given that $H=O(\log^tN)$, the claim follows for $A$ large
enough depending on $t$ and $s$ only.
\end{proof}

We are left to deal with
\begin{equation}
\label{beeeerk}
\sum_{\mathbf{d},\mathbf{m},\mathbf{e},\mathbf{\ell}}\alpha(q_1,\ldots,q_{t+s})\sum_{\mathbf{u,v}}M
\int_{I^{4s+2t}}F(\mathbf{\Xi})d\mathbf{\Xi}.
\end{equation}
We now swap the summation
 $\sum_{\mathbf{d},\mathbf{m},\mathbf{e},\mathbf{\ell}}$
and the integration over the compact set $I^{4s+2t}$, using Fubini's theorem.
This causes no problem because the sum is absolutely convergent;
we are not explicitly going to prove the absolute convergence, but
it follows from the bounds we are going to derive in the
proof of the next claim.

We also continue swapping summation and multiplication, by enforcing
at little cost an extra
coprimality condition: we show we can restrict
to tuples where $(d_jm_j\epsilon_j,\lambda_i)=1$ for all $i,j$ and
$(d_im_i\epsilon_i,d_jm_j\epsilon_j)=1$ for all $i\neq j$.
We need another, more subtle argument to impose this coprimality compared to the coprimality condition involving the
variables $u_j$ in Claim 1, because a crucial ingredient of the proof of this claim was that the prime factors involved
were all at least $N^{(\log\log N)^{-3}}$, an assumption we do not have for $d,m,e$.

\noindent\textbf{Claim 4.}
Let $\mathbf{s,i,u,v}$ be fixed vectors of integers satisfying the usual conditions.
Then we have
\begin{align*}
&\sum_{\mathbf{d},\mathbf{m},\mathbf{e},\mathbf{\ell}}\alpha(q_1,\ldots,q_{t+s})F(\mathbf{\Xi})
\prod_{i,j}\mu(\ell_{i,1})\mu(\ell_{i,2})\mu(\tilde{e}_{j,1})\mu(\tilde{e}_{j,2})  \\
&=(1+O(w(N)^{-1/2}))\sum_{\mathbf{d},\mathbf{m},\mathbf{e},\mathbf{\ell}}'\alpha(q_1,\ldots,q_{t+s})F(\mathbf{\Xi})
\prod_{i,j}\mu(\ell_{i,1})\mu(\ell_{i,2})\mu(\tilde{e}_{j,1})\mu(\tilde{e}_{j,2}) ,
\end{align*}
where the dashed sum is retricted to tuples satisfying $(d_jm_j\epsilon_j,\lambda_i)=1$ for all $i,j$ and
$(d_im_i\epsilon_i,d_jm_j\epsilon_j)=1$ for all $i\neq j$.


\begin{proof}
The goal is to bound the contribution of the entries failing
the coprimality conditions.
To achieve this, we exploit the multiplicativity of each summand $T(\mathbf{d,m,e,\ell})$,
which is a term of the form 
\begin{equation}\label{summand}
\alpha(q_1,\ldots,q_{t+s})
\prod_{i=1}^t \ell_{i,1}^{-z_{i,1}}\ell_{i,2}^{-z_{i,2}}\mu(\ell_{i,1})\mu(\ell_{i,2})\prod_{j=t+1}^{t+s}\tilde{e}_{j,1}^{-z_{j,1}}\tilde{e}_{j,2}^{-z_{j,2}}\tilde{d}_j^{-z_{j,3}}\tilde{m}_j^{-z_{j,4}}
\mu(\tilde{e}_{j,1})\mu(\tilde{e}_{j,2}),
\end{equation}
in order to write it as a product over primes; only primes greater than $w(N)$ need be
considered, as smaller ones have no chance of dividing any of the
parameters. We can even
partition the primes $p$ into two classes, according to
whether $p$ divides a single $q_j$ or at least two of them.
Thus, the summand $T$ can be written as
$$
\prod_{p\text{ dividing a single } q_j}\alpha((p^{v_p(q_j)})_j)
A_{p}
\prod_{p\text{ dividing at least two } q_j}\alpha((p^{v_p(q_j)})_j)B_p
$$
where $A_p$ and $B_p$ are complex numbers of modulus at most one and $v_p$ is the $p$-adic valuation.
For any given tuples $\mathbf{d,m,e,\ell}$ and $j\in[s+t]$, we write
$\kappa_j=\prod_{p\text{ dividing at least two }q_j}p^{v_p(q_j)}$. Thus
$$p\mid \kappa_i\Rightarrow p\mid\prod_{j\neq i}\kappa_j.$$
We now arrange the summands $T(\mathbf{d,m,e,\ell})$ according to their tuples $(\kappa_1,\ldots,\kappa_{t+s})$.
Let us fix such a tuple $(\kappa_1,\ldots,\kappa_{t+s})$.
The sum of summands $T$ corresponding to this tuple is


\begin{align*}
&\alpha(\kappa_1,\ldots,\kappa_{t+s})\abs{\sum_{
\substack{\mathbf{d},\mathbf{m},\mathbf{e},\mathbf{\ell}\\\forall j\,(q_j,\kappa_j)=1}}'\alpha(q_1,\ldots,q_{t+s})\prod_{\substack{j=t+1\\k=1,2}}^{t+s}
\tilde{e}_{j,k}^{-z_{j,k}}\mu(\tilde{e}_{j,k})\tilde{d}_j^{-z_{j,3}}\tilde{m}_j^{-z_{j,4}}
\prod_{i\in[t]} \ell_{i,k}^{-z_{i,k}}\mu(\ell_{i,k})}\\
&\leq\alpha((\kappa_j))\prod_{p\mid\prod_j\kappa_j}(1+O(p^{-1}))
\abs{\sum_{\mathbf{d},\mathbf{m},\mathbf{e},\mathbf{\ell}}'\alpha((q_j))\prod_{\substack{j=t+1\\k=1,2}}^{t+s}
\tilde{e}_{j,k}^{-z_{j,k}}\mu(\tilde{e}_{j,k})\tilde{d}_j^{-z_{j,3}}\tilde{m}_j^{-z_{j,4}}
\prod_{i\in[t]} \ell_{i,k}^{-z_{i,k}}\mu(\ell_{i,k})}\\
\end{align*}
by Möbius inversion (or inclusion-exclusion) and the triangle inequality.
Next we evaluate the sum of all terms of the form
$\alpha(\kappa_1,\ldots,\kappa_{t+s})\prod_{p\mid\prod_j\kappa_j}(1+O(p^{-1}))$. Using multiplicativity,
we can write them as products over primes, all these primes
being larger than $w(N)$.
Thus, a crude bound is
$$
\prod_{p>w(N)}\left(1+\sum_{\substack{a_1,\ldots,a_{t+s}\\
\text{ at least two }a_i>0}}O(a_1^2+\cdots+a_t^2+a_{t+1}^4+
\cdots+a_{t+s}^4)\alpha((p^{a_i}))(1+O(p^{-1}))\right)-1
$$
where we have used the simple bound $\tau_k(p^{a_i})\ll a_i^{k-1}$ for
$k=3$ (because of $\lambda_i=\lambda_i/\ell_i\cdot\lambda_i/\ell_i'
\cdot \ell_i\ell_i'/\lambda_i$, hence the number of occurrences of
$\lambda_i$ is bounded by the number of decompositions of it
into three factors) and for $k=5$ 
(because of $d_jm_j^2\epsilon_j=d_j\cdot m_j^2\cdot \epsilon_j/e_j\cdot \epsilon_j/e_j'\cdot e_je_j'/\epsilon_j$).
The requirement that 
at least two $a_i$ be positive comes from the very definition
of $\kappa_i$.
Notice that the $-1$ is here to remove the 1 arising from
$\alpha(1,\ldots, 1)$.
To further bound this expression,
we first bound $a_i^2$ by $a_i^4$ and recall
that the number of tuples $(a_1,\ldots,a_{t+s})$
satisfying $\max a_i=k$ is at most $t'(k+1)^{t'-1}$ (with $t'=t+s$).
For such tuples, we have $\sum_i a_i^4\leq t'k^4$
and since the system is of finite complexity and at least
two $a_i$ are nonzero, $\alpha((p^{a_i})_{i\in[t+s]})\leq p^{-k-1}$ according to Proposition~\ref{prop:locbounds}.
Thus
$$\sum_{\substack{a_1,\ldots,a_{t+s}\\
\text{ at least two }a_i>0}}O(a_1^2+\cdots+a_t^2+a_{t+1}^4+
\cdots+a_{t+s}^4)\alpha((p^{a_i}))(1+O(p^{-1}))$$
is bounded by
$$\sum_{k\geq1}
p^{-k-1}t'^2k^{t'+3}
\ll\sum_{k\geq 1}p^{-3k/4-1}
\ll p^{-3/2}
$$
the first inequality being provided by obvious growth comparisons valid for large $p$ (we may assume $N$ to be large enough
for $p>w(N)$ to satisfy automatically this condition).
Since
$$
\prod_{p>w(N)}(1+p^{-3/2})-1\leq \sum_{n>w(N)}n^{-3/2}\ll w(N)^{-1/2},
$$
Claim 4 follows.
\end{proof}

The extra coprimality condition that Claim 4 allows us to assume
enables us to write $\alpha$ as the product of the
reciprocals of its arguments, resulting in
\begin{align*}
&\sum_{\mathbf{d},\mathbf{m},\mathbf{e},\mathbf{\ell}}'\alpha(q_1,\ldots,q_{t+s})\prod_{j=t+1}^{t+s}\frac{\mu(e_j)\mu(e_j')}{\epsilon_j}
e_{j,1}^{-z_{j,1}}e_{j,2}^{-z_{j,2}}d_j^{-1-z_{j,3}}m_j^{-2-z_{j,4}}
\prod_{i=1}^t\frac{\mu(\ell_i)\mu(\ell'_i)}{\lambda_i}
\ell_{i,1}^{-z_{i,1}}\ell_{i,2}^{-z_{i,2}}\\
&=\sum_{\mathbf{d},\mathbf{m},\mathbf{e},\mathbf{\ell}}'\prod_{j=t+1}^{t+s}\frac{\mu(e_j)\mu(e_j')}{\epsilon_j}
e_{j,1}^{-z_{j,1}}e_{j,2}^{-z_{j,2}}d_j^{-1-z_{j,3}}m_j^{-2-z_{j,4}}
\prod_{i=1}^t\frac{\mu(\ell_i)\mu(\ell'_i)}{\lambda_i}
\ell_{i,1}^{-z_{i,1}}\ell_{i,2}^{-z_{i,2}}.
\end{align*}
Notice that the above is an equation without tildes. We will in the sequel
avoid them, observing that for any fixed $\mathbf{u}$, we have
\begin{align*}
&\sum_{\mathbf{d},\mathbf{m},\mathbf{e},\mathbf{\ell}}'\prod_{j=t+1}^{t+s}\frac{\mu(\tilde{e_j})\mu(\tilde{e_j}')}{\epsilon_j}
\tilde{e}_{j,1}^{-z_{j,1}}\tilde{e}_{j,2}^{-z_{j,2}}\tilde{d}_j^{-1-z_{j,3}}\tilde{m}_j^{-2-z_{j,4}}
\prod_{i=1}^t\frac{\mu(\ell_i)\mu(\ell'_i)}{\lambda_i}
\ell_{i,1}^{-z_{i,1}}\ell_{i,2}^{-z_{i,2}}
\\
&=\sum_{\mathbf{d},\mathbf{m},\mathbf{e},\mathbf{\ell}}'\prod_{j=t+1}^{t+s}\frac{\mu(e_j)\mu(e_j')}{\epsilon_j}
e_{j,1}^{-z_{j,1}}e_{j,2}^{-z_{j,2}}d_j^{-1-z_{j,3}}m_j^{-2-z_{j,4}}
\prod_{i=1}^t\frac{\mu(\ell_i)\mu(\ell'_i)}{\lambda_i}
\ell_{i,1}^{-z_{i,1}}\ell_{i,2}^{-z_{i,2}}\\
&\times
\sum_{\mathbf{v}}\mu(v_{j,e})\mu(v_{j,e'})v_{j,e}^{-z_{j,1}}v_{j,e'}^{-z_{j,2}}v_{j,d}^{-z_{j,3}}v_{j,m}^{-z_{j,4}},
\end{align*}
where the sum over $\mathbf{v}$ is as usual over vectors $(v_{j,x})$ where
$v_{j,x}\mid u_j$ and $v_{j,x}$ satisfies the same condition on its prime factors as $x$ (all in $\P_j$ for
$d$ and $e$, all in $\Q_j$ for $m$).

Next we claim that we can remove the dash on the sum.

\noindent\textbf{Claim 5.} The following equality holds, for any choice of the family $\xi_{j,k}$ in
$I=[-\sqrt{\log R},\sqrt{\log R}]$.
\begin{align*} 
&\sum_{\mathbf{d},\mathbf{m},\mathbf{e},\mathbf{\ell}}'\prod_{j=t+1}^{t+s}\frac{\mu(e_j)\mu(e_j')}{\epsilon_j}
e_{j,1}^{-z_{j,1}}e_{j,2}^{-z_{j,2}}d_j^{-1-z_{j,3}}m_j^{-2-z_{j,4}}
\prod_{i=1}^t\frac{\mu(\ell_i)\mu(\ell'_i)}{\lambda_i}\ell_i^{-z_{i,1}}\ell_i'^{-z_{i,2}}
\\
&=(1+O(w(N))^{-1/2})
\sum_{\mathbf{d},\mathbf{m},\mathbf{e},\mathbf{\ell}}\prod_{j=t+1}^{t+s}
\frac{\mu(e_j)\mu(e_j')}{\epsilon_j}e_{j,1}^{-z_{j,1}}e_{j,2}^{-z_{j,2}}d_j^{-1-z_{j,3}}m_j^{-2-z_{j,4}}\times\prod_{i\in[t]} 
\frac{\mu(\ell_i)\mu(\ell_i')}{\lambda_i}\ell_{i,1}^{-z_{i,1}}\ell_{i,2}^{-z_{i,2}}.
\end{align*}
\begin{proof}
The justification is basically the same as for Claim 4, because
the claim simply consists in replacing the dashed sum by a complete sum, at the same small cost.
\end{proof}
Let us introduce for any $i\in[t]$ and $\mathbf{l,\Xi}$ the notation
$$
V_i=V_i(\mathbf{l,\Xi})=\frac{\mu(\ell_i)\mu(\ell_i')}{\lambda_i}\ell_i^{-z_{i,1}}\ell_i'^{-z_{i,2}}
$$
and
$$V(\mathbf{l,\Xi})=\prod_{i\in[t]}V_i(\mathbf{l,\Xi}).$$
Similarly, for any $j\in\intnn{t+s}{t+s}$ and tuples $\mathbf{u,v,d,m,e}$ we define
\begin{align*}
S_j(\mathbf{u,v,\Xi})&= \frac{2^{s_j}}{u_j}\mu(v_{j,e})\mu(v_{j,e'})v_{j,e}^{-z_{j,1}}v_{j,e'}^{-z_{j,2}}v_{j,d}^{-z_{j,3}}v_{j,m}^{-z_{j,4}}\\
T_j(\mathbf{d,m,e,\Xi}) &= \frac{\mu(e_j)\mu(e_j')}{\epsilon_j}e_j^{-z_{j,1}}e_j'^{-z_{j,2}}d_j^{-1-z_{j,3}}
m_j^{-2-z_{j,4}}. 
\end{align*}
Finally we put
\begin{equation*}
S(\mathbf{u,v,\Xi}) =\prod_{j=t+1}^{t+s}S_j\qquad\text{ and
}
\qquad
T(\mathbf{d,m,e,\Xi}) = \prod_{j=t+1}^{t+s}T_j.
\end{equation*}
With this notation, one can rewrite \eqref{beeeerk} as 
\begin{equation}
\begin{split}
(1+O(w^{-1/2}))\int_{I^{4s+2t}}\theta(\mathbf{\Xi})
\sum_{\mathbf{u,v}}S(\mathbf{u,v,\Xi})
\sum_{\mathbf{d,m,e}}T(\mathbf{d,m,e,\Xi})
\sum_{\mathbf{\ell}}V(\mathbf{l,\Xi}) d\mathbf{\Xi}.
\end{split}
\label{mainterm5}
\end{equation}
Now we show that the error arising from
the $O(w^{-1/2})$ term in \eqref{mainterm5} is indeed negligible: we must ensure that
\begin{equation}
\label{errorTerm}
w^{-1/2}H\sum_{\mathbf{s,i}}\int_{I^{4s+2t}}
\theta(\mathbf{\Xi})
\sum_{\mathbf{u,v}}S(\mathbf{u,v,\Xi})
\sum_{\mathbf{d,m,e}}T(\mathbf{d,m,e,\Xi})
\sum_{\mathbf{\ell}}V(\mathbf{l,\Xi}) d\mathbf{\Xi}=o(1).
\end{equation}
This is because on the one hand
$$\abs{\sum_{\mathbf{v_j}}\mu(v_{j,e})\mu(v_{j,e'})v_{j,e}^{-z_{j,1}}v_{j,e'}^{-z_{j,2}}v_{j,d}^{-z_{j,3}}v_{j,m}^{-z_{j,4}}}
\leq \tau(u_j)^4$$
and 
$$
\sum_{\mathbf{s,i,u}}\prod_{j=t+1}^{t+s}\frac{2^{s_j}\tau(u_j)^4}{u_j}=O(1)
$$
by similar calculations\footnote{The main ingredients are the easy observation that any $u\in U(i,s)$ has $2^{m_0(i,s)}$ divisors and the bound
$\sum_{u\in U(i,s)}u^{-1}\leq (\sum_{p\in I_i}p^{-1})^{m_0}\ll
(\log 2)^{m_0}$, where $I_i=[N^{2^{-i-1}},N^{2^{-i}}]$.
}
 to the ones of Matthiesen
\cite[Proof of Proposition 4.2]{Matt2}.
And on the other hand, the next claim provides a fitting bound.

\noindent\textbf{Claim 6.} We have
\begin{equation}\label{integralproduct}
\int\abs{\theta(\mathbf{\Xi})
\sum_{\mathbf{d,m,e}}T(\mathbf{d,m,e,\Xi})
\sum_{\mathbf{\ell}}V(\mathbf{l,\Xi})}  d\mathbf{\Xi} =O(1/(\log R)^{t}),
\end{equation}
where the integral is over $I^{4s+2t}$.

Given that $H=O(\log R)^{t}$, the bound \eqref{errorTerm} follows from this claim. 
\begin{proof}
We first replace the sum over $\ell_i,\ell_i'$,
for any $i\in[t]$,
by a product over primes, using multiplicativity, to get
$$
\sum_{\ell_i,\ell_i'}V_i=\sum_{\ell_i,\ell_i'}\frac{\mu(\ell_i)\mu(\ell_i')}{\lambda_j}\ell_i^{-z_{i,1}}\ell_i'^{-z_{i,2}}=\prod_{s\in \P}(1-s^{-1-z_{i,1}}-s^{-1-z_{i,2}}+s^{-1-z_{i,1}-z_{i,2}}).
$$
Then we notice that for large primes $s$ and complex numbers $z,z'$ of positive real part
$$
1-s^{-1-z}-s^{-1-z'}+s^{-1-z-z'}
=\frac{(1-s^{-1-z})(1-s^{-1-z'})}{1-s^{-1-z-z'}}+O(s^{-2}),
$$
so that
$$
\prod_{s\in \P}(1-s^{-1-z_{j,1}}-s^{-1-z_{j,2}}+s^{-1-z_{j,1}-z_{j,2}})\ll\prod_{s\in \P}\frac{(1-s^{-1-z})(1-s^{-1-z'})}{1-s^{-1-z-z'}}.
$$
Finally we recall 
that
the $\zeta$ function is defined for $\Re z>1$ by
$$
\zeta(z)=\sum_{n\geq 1}n^{-z}=\prod_p(1-p^{-z})^{-1}
$$
and satisfies
$$
\zeta(z)=\frac{1}{z-1}+O(1)
$$
for values of $z$ near 1.
From this fact, a quick computation yields
$$
\prod_{s\in \P}\frac{(1-s^{-1-z})(1-s^{-1-z'})}{1-s^{-1-z-z'}}
\ll \frac{zz'}{z+z'},
$$
whence the bound
\begin{equation}\label{boundprod}
\prod_{s\in \P}(1-s^{-1-z_{i,1}}-s^{-1-z_{i,2}}+s^{-1-z_{i,1}-z_{i,2}})\ll \frac{z_{i,1}z_{i,2}}{z_{i,1}+z_{i,2}}.
\end{equation}
 for any $i\in[t]$ and $\xi_{i,k}\in I$ (for
$k=1,2$) and the corresponding $z_{i,k}$.
Similarly, for any $j\in \{t+1,\ldots,t+s\}$
\begin{equation}\label{prodprime}
\begin{split}
\sum_{d_j,m_j,e_j,e_j'} \frac{\mu(e_j)\mu(e_j')}{\epsilon_j}e_j^{-z_{j,1}}e_j'^{-z_{j,2}}d_j^{-1-z_{j,3}}
m_j^{-2-z_{j,4}}&=\prod_{q\in \Q_j}(1-q^{-1-z_{j,1}}-q^{-1-z_{j,2}}+q^{-1-z_{j,1}-z_{j,2}})\\
&\prod_{r\in \Q_j}(1-r^{-2-z_{j,4}})^{-1} \prod_{p\in \P_j}(1-p^{-1-z_{j,3}})^{-1}.
\end{split}
\end{equation}
Notice that the product in $r$ is a convergent product, bounded by a constant when $z_{j,4}$ varies in the permitted
range.

Given that
$\P_j$ and $\Q_j$ each have density 1/2 among the primes,
we can write 
$$
\sum_{q\in\P_j}q^{-1-z}=\frac{1}{2}\log\frac{1}{z}+O(1)
$$
for $\Re z>0$.\footnote{This amounts to saying that if a set of primes has a natural density, it has a Dirichlet density which is equal to its natural density.}
This provides a bound for the product \eqref{prodprime},
similar to the one in \eqref{boundprod}, namely
\begin{align*}
&\prod_{q\in \Q_j}(1-q^{-1-z_{j,1}}-q^{-1-z_{j,2}}+q^{-1-z_{j,1}-z_{j,2}})
\prod_{r\in \Q_j}(1-r^{-2-z_{j,4}})^{-1} \prod_{p\in \P_j}(1-p^{-1-z_{j,3}})^{-1}\\
&\ll\abs{z_{j,1}}^{1/2}\abs{z_{j,2}}^{1/2}
\abs{z_{j,1}+z_{j,2}}^{-1/2}\abs{z_{j,3}}^{-1/2}
\end{align*}
Recall that  $z_{j,k}=(1+\xi_{j,k})(\log R)^{-1}$, thus
$\abs{z_{j,k}}\leq (1+\abs{\xi_{j,k}})(\log R)^{-1}$ by triangle inequality, and
$
\abs{z_{j,1}+z_{j,2}}^{-1}\leq \log R
$
for any $j\in[t+s]$.
Moreover,
\eqref{fourier}
yields
$$
\theta(\mathbf{\Xi})=O_A\left(\prod_{j,k} (1+\abs{\xi_{j,k}})^{-A}\right).
$$
Multiplying all these bounds, we find that the integrand in \eqref{integralproduct}
is bounded by
\begin{align*}
&\prod_{i=1}^t\abs{z_{i,2}}\abs{z_{i,1}}\abs{z_{i,1}+z_{i,2}}^{-1}
\prod_{j=t+1}^{t+s}\abs{z_{j,1}}^{1/2}\abs{z_{j,2}}^{1/2}
\abs{z_{j,1}+z_{j,2}}^{-1/2}\abs{z_{j,3}}^{-1/2}
\prod_{j,k} (1+\abs{\xi_{j,k}})^{-A}\\
&\ll (\log R)^{-t}
\left(\prod_{i=1}^t(1+\abs{\xi_{i,1}})(1+\abs{\xi_{i,2}})\right)^{1-A}\left(\prod_{j=t+1}^{t+s}(1+\abs{\xi_{j,1}})(1+\abs{\xi_{j,2}})
\right)^{1/2-A}\\
&\ll (\log R)^{-t}\prod_{j,k} (1+\abs{\xi_{j,k}})^{-A/2}
\end{align*}
when $A$ is large enough (for the last step).
This last product is certainly integrable as soon as $A>2$, so the final expression is $O((\log R)^{-t})$ as
claimed.
\end{proof}
We now study the main term of
\eqref{mainterm5}.
We can again swap  summation  and integration using Fubini's theorem.
Using separation of variables, we transform the main term of \eqref{mainterm5} into
\begin{multline}
\label{mainterm6}
\sum_{\mathbf{u,v},\mathbf{d},\mathbf{e},\mathbf{m},\mathbf{\ell}}
\prod_{i=1}^t 
\int_{I^2}V_i\theta(\xi_{i,1})\theta(\xi_{i,2})d\xi_{i,1} d\xi_{i,2}
\prod_{j=t+1}^{t+s}
\int_{I^4}S_jT_j\prod_{k\in[4]}\theta(\xi_{j,k})d\xi_{j,k}
\end{multline}

It is now time to
undo the truncation to $I$ in these integrals, in order to be able to 
collapse them into factors of $\chi$.
The error term arising from the removal of this truncation 
is the same as the one introduced by
the truncation, so it can be subsumed into the $o(1)$ of
\eqref{eqlinforms}.
Thus, up to an error term $E_{\mathbf{i,s}}$ satisfying 
$(\log R)^t\sum_{\mathbf{i},\mathbf{s}} E_{\mathbf{i},\mathbf{s}}=o(1)$, the expression
\eqref{mainterm6} is equal to
\begin{multline}
\sum_{\mathbf{u,v},\mathbf{d},\mathbf{e},\mathbf{m},\mathbf{\ell}}
\prod_{i=1}^t \frac{\mu(\ell_{i,1})\mu(\ell_{i,2})}{\lambda_i}
\prod_{k=1,2}
\chi\left(\frac{\log \ell_{i,k}}{\log R}\right)\\
\prod_{j=t+1}^{t+s}\frac{2^{s_j}\tau (u_j)}{u_j}\frac{\mu(e_jv_{j,e})\mu(e_j'v_{j,e'})}{d_jm_j^2\epsilon_j}\chi(\frac{\log d_jv_{j,d}}{\log R})
\chi(\frac{\log m_jv_{j,m}}{\log R})\prod_{k=1,2}
\chi(\frac{\log e_{j,k}v_{j,e_k}}{\log R}).
\end{multline}
Interchanging summation and multiplication, we find that,
up to error terms of the desired magnitude ($O_D\left(
\frac{N^{d-1+O_D(\gamma)}}{\Vol (K)}\right)$ in Claims 1 and 2,
various $o(1)$ throughout the proof), $\Omega$ equals

\begin{align*}
\prod_{j=t+1}^{t+s}&C_{D_j,\gamma}^{-1}\sum_{s_j,i_j,u_j,v_j}\sum_{d_j,m_j,e_j,e_j'}\frac{2^{s_j}
}{u_j}\frac{\mu(e_jv_{j,e})\mu(e_j'v_{j,e'})}
{d_jm_j^2\epsilon_j}\prod_{x\in\{d,m,e,e'\}}\chi\left(\frac{\log x_jv_{j,x}}{\log R}\right)\\
&\times\prod_{i\in[t]}\left(\log R\frac{\phi(W)}{W}\sum_{\ell_i,\ell_i'}\frac{\mu(\ell_i)\mu(\ell_i')}{\lambda}\prod_{x\in\{\ell_i,\ell_i'\}}\chi\left(\frac{\log x}{\log R}\right)\right),
\end{align*}
which is a product of $t+s$ factors, independent of the
system of linear forms.
Hence the $j$th factor, for $j\in[t+s]$, is also
the main term of the average of the $j$th pseudorandom majorant for the trivial system $\Phi : \Z\rightarrow \Z,n\mapsto n$.
Now because of the properties of the Green-Tao
and the Matthiesen majorant,
described in subsections \ref{majorantgt} and
\ref{majorantmatt} respectively, these averages are $1+o(1)$, whence the result.

\section{Volume packing arguments and local divisor density}
In this appendix, we shall collect
some frequently used facts concerning the number
of solutions to a system
of linear equations in a convex set of $\R^d$ and in $(\Z/m\Z)^d$.
We first recall a lemma already stated earlier but
particularly relevant here, borrowed from Green and Tao \cite[Appendix A]{GT2}.
\begin{lm}\label{vol}
Let $K\subset [0,N]^d$ be a convex body of $\R^d$. Then
$$\abs{K\cap\Z^d}=\sum_{n\in K\cap Z^d}=\Vol(K)+O_d(N^{d-1}).$$
\end{lm}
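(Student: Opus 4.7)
The plan is to use the standard cube-packing argument. To each lattice point $n\in K\cap\Z^d$ I would associate the half-open unit cube $Q_n=n+[0,1)^d$. Since these cubes are pairwise disjoint, the cardinality $\abs{K\cap\Z^d}$ equals the volume of the union $U=\bigcup_{n\in K\cap\Z^d} Q_n$. The comparison between $\Vol(U)$ and $\Vol(K)$ reduces the problem to estimating the volume of the symmetric difference $U\triangle K$.

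Next I would note that any point of $U\triangle K$ lies within $\ell^\infty$-distance at most $\sqrt{d}$ from the boundary $\partial K$: if $x\in U\setminus K$, then $x$ belongs to some $Q_n$ with $n\in K$, so $x$ and a point of $K$ are within distance $\sqrt d$, while $x\notin K$ forces a boundary point of $K$ on the segment between them; similarly for $x\in K\setminus U$. Hence $U\triangle K\subset \partial K+[-1,1]^d$, and therefore
\[
\abs{\Vol(U)-\Vol(K)}\leq \Vol\bigl(\partial K+[-1,1]^d\bigr)\ll_d \mathcal H^{d-1}(\partial K),
\]
where $\mathcal H^{d-1}$ denotes $(d-1)$-dimensional Hausdorff measure.

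The main (and only nontrivial) step is to bound $\mathcal H^{d-1}(\partial K)$ by $O_d(N^{d-1})$. For this I would invoke the monotonicity of surface area under inclusion of convex bodies: if $K\subset K'$ are both convex, then $\mathcal H^{d-1}(\partial K)\leq \mathcal H^{d-1}(\partial K')$. Taking $K'=[0,N]^d$, whose boundary has surface area $2dN^{d-1}$, gives the required bound. The monotonicity itself follows from Cauchy's surface area formula, which writes the surface area as the average over directions of the $(d-1)$-volume of the orthogonal projection, together with the fact that orthogonal projection of convex sets is monotone under inclusion.

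Combining these pieces yields $\abs{K\cap\Z^d}=\Vol(U)=\Vol(K)+O_d(N^{d-1})$, which is the claim. The hard part is really the monotonicity of surface area; once that is accepted, the rest is the routine tube/cube packing estimate.
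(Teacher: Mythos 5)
Your argument is correct in substance, and it is worth noting that the paper does not actually prove this lemma: it is quoted verbatim from Green and Tao \cite[Appendix A]{GT2}, where it is established by covering $\partial K$ by boundedly many Lipschitz images of $[0,1]^{d-1}$ with Lipschitz constant $O_d(N)$ and counting the unit lattice cubes meeting such images. Your route is the same cube-packing reduction at the front end (disjoint half-open cubes $Q_n$, so $\abs{K\cap\Z^d}=\Vol(U)$, and $U\triangle K$ lies in the unit tube around $\partial K$ -- all fine, though the tube is at $\ell^\infty$-distance $1$, i.e.\ Euclidean distance $\sqrt d$), but you finish differently, via Cauchy's surface-area formula and the monotonicity of surface area for nested convex bodies, giving $\mathcal{H}^{d-1}(\partial K)\leq 2dN^{d-1}$. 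That is a clean and legitimate alternative; it trades the Lipschitz-parametrisation bookkeeping for one classical fact of convex geometry. One small imprecision: the inequality $\Vol(\partial K+[-1,1]^d)\ll_d \mathcal{H}^{d-1}(\partial K)$ is false for very small convex bodies (the left side is at least $2^d$ while the right side can be arbitrarily small); the correct general statement is $\Vol(\partial K+[-1,1]^d)\ll_d 1+\mathcal{H}^{d-1}(\partial K)$, e.g.\ via Steiner's formula, which still yields $O_d(N^{d-1})$ since $N\geq 1$. With that adjustment the proof is complete.
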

We recall the definition of the local divisor density and we
mention some useful properties.
\begin{dfn}
For a given system of affine-linear forms
$\Psi=(\psi_1,\ldots,\psi_t):\Z^d\rightarrow\Z^t$,
positive integers $d_1,\ldots,d_t$ of lcm $m$, define the \textit{local divisor density} by
$$
\alpha_{\Psi}(d_1,\ldots,d_t)=\E_{n\in (\Z/m\Z)^d}\prod_{i=1}^t
1_{\psi_i(n)\equiv 0\mod d_i}.
$$
\end{dfn}
The following lemma is borrowed from Matthiesen \cite[Lemma 9.3]{Matt}.
\begin{lm}
\label{lm:localdens}
Let $K\subset [-B,B]^d$ be a convex body and $\Psi$ a system of affine-linear forms, and let 
$d_1,\ldots,d_t$ be integers of lcm $m$. Then
$$
\sum_{n\in\Z^d\cap K}\prod 1_{d_i\mid\psi_i(n)}=\Vol (K)
\alpha_{\Psi}(d_1,\ldots,d_t)+O(B^{d-1}m).
$$
\end{lm}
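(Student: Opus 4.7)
The plan is a standard volume-packing argument: partition $\Z^d$ into residue classes modulo $m$, use the fact that the indicator $\prod_i 1_{d_i\mid\psi_i(n)}$ is periodic modulo $m$, and apply Lemma~\ref{vol} to each residue class.

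First I would note that since $d_i \mid m$ for every $i$, the quantity $\prod_{i}1_{d_i\mid \psi_i(n)}$ depends only on $n \pmod{m}$. So I would write
\begin{equation*}
\sum_{n\in\Z^d\cap K}\prod_{i=1}^{t} 1_{d_i\mid\psi_i(n)}
=\sum_{a\in[m]^d}\left(\prod_{i=1}^{t}1_{d_i\mid\psi_i(a)}\right)\bigl|\{n\in\Z^d\cap K : n\equiv a\pmod m\}\bigr|.
\end{equation*}
For each fixed residue $a\in[m]^d$, the set of integer points in $K$ congruent to $a$ modulo $m$ is in bijection with $\Z^d\cap K_a$, where $K_a=\{y\in\R^d : my+a\in K\}$ is a convex body contained (up to translation) in a cube of side $O(B/m)$.

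Next, I would apply Lemma~\ref{vol} to $K_a$, which yields $|K_a\cap\Z^d|=\Vol(K_a)+O((B/m)^{d-1})$. Since $\Vol(K_a)=m^{-d}\Vol(K)$, each residue class contributes $m^{-d}\Vol(K)+O((B/m)^{d-1})$ to the inner count. Summing, the main term becomes
\begin{equation*}
m^{-d}\Vol(K)\sum_{a\in[m]^d}\prod_{i=1}^{t}1_{d_i\mid\psi_i(a)}
=\Vol(K)\,\alpha_{\Psi}(d_1,\ldots,d_t),
\end{equation*}
by the very definition of $\alpha_{\Psi}$. The accumulated error is at most $m^d \cdot O((B/m)^{d-1}) = O(B^{d-1} m)$, which is exactly the claimed remainder.

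No real obstacle is expected; the only minor point to verify is that Lemma~\ref{vol}, which is stated for convex bodies in $[0,N]^d$, extends (by translation invariance and obvious monotonicity of the error in the diameter) to convex bodies sitting in a box of side $O(B/m)$, so that the implicit constants depend only on $d$. Once that is observed, the proof reduces to the bookkeeping above.
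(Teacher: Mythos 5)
Your argument is correct and is the standard volume-packing proof; note that the paper itself does not prove this lemma but simply cites it from Matthiesen (Lemma 9.3 of \cite{Matt}), so there is no in-paper proof to compare against, and your write-up is exactly the argument one would expect to find there. The decomposition into residue classes modulo $m$, the periodicity of the divisibility indicator (valid because each $d_i$ divides the lcm $m$), the identification of each class with $\Z^d\cap K_a$ for the rescaled convex body $K_a$, and the summation of the per-class errors $m^d\cdot O((B/m)^{d-1})=O(B^{d-1}m)$ are all sound.

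The one point you flag at the end deserves slightly more care than ``translation invariance and monotonicity'': when $m>B$ the boxes containing the $K_a$ have side less than $1$, and a lattice-point count in a convex body can differ from its volume by $O(1)$ regardless of how small the body is, so the per-class error is really $O(1+(B/m)^{d-1})$ and the total error picks up an extra $O(m^d)$ term. This is harmless because in that regime both sides of the claimed identity are $O(B^d)=O(B^{d-1}m)$, so the lemma holds trivially; but the clean way to close the argument is to treat the case $m>B$ separately by this trivial bound rather than to assert that the error in Lemma~\ref{vol} is monotone in the diameter.
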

We shall try to bound $\alpha_{\Psi}(p^{a_1},\ldots,p^{a_t})$.
To this aim, we state a version of Hensel's lemma in several
variables. 
\begin{lm}
\label{Hensel}
Let $Q\in\Z[X_1,\ldots,X_d]$, $p$ be a prime and $k\geq 1$
an integer
and \mbox{$x\in (\Z/p^k\Z)^d$} such that  $Q(x)\equiv 0 \mod p^k$ and
$$\grad Q(x)=\left(\frac{\partial Q}{\partial x_1},\ldots, 
\frac{\partial Q}{\partial x_d}\right)(x)\neq 0\mod p.$$ Then there exist precisely
$p^{d-1}$
vectors $y\in (\Z/p^{k+1}\Z)^d$ such that $x\equiv y\mod p^k$
and $Q(y)\equiv 0\mod p^{k+1}$.
\end{lm}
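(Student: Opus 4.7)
The plan is to parametrise all lifts $y$ of $x$ to $(\Z/p^{k+1}\Z)^d$ and count those satisfying the congruence, using a Taylor expansion of $Q$. First I would pick an arbitrary lift $\tilde{x}\in \Z^d$ of the class $x$. Any class $y\in (\Z/p^{k+1}\Z)^d$ with $y\equiv x\mod p^k$ is then uniquely represented as $y=\tilde{x}+p^kh$ for some $h\in (\Z/p\Z)^d$, giving exactly $p^d$ candidate lifts.

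The central observation is the Taylor formula for polynomials: for every $\tilde{x},h\in \Z^d$,
\[
Q(\tilde{x}+p^kh) = Q(\tilde{x}) + p^k\grad Q(\tilde{x})\cdot h + p^{2k}R(\tilde{x},h),
\]
where $R$ is a polynomial in $\tilde{x}$ and $h$ with integer coefficients collecting the higher-order terms of the expansion. Since $k\geq 1$ we have $2k\geq k+1$, so modulo $p^{k+1}$ the remainder disappears and
\[
Q(\tilde{x}+p^kh) \equiv Q(\tilde{x}) + p^k\grad Q(\tilde{x})\cdot h \mod p^{k+1}.
\]

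Next, since $Q(x)\equiv 0\mod p^k$ we can write $Q(\tilde{x})=p^kc$ for some $c\in\Z$, and the congruence $Q(y)\equiv 0\mod p^{k+1}$ becomes equivalent to the single linear congruence
\[
c + \grad Q(\tilde{x})\cdot h \equiv 0 \mod p
\]
in the unknown $h\in (\Z/p\Z)^d$. By hypothesis $\grad Q(\tilde x)\not\equiv 0\mod p$, so at least one coefficient of this linear form is invertible modulo $p$; solving for the corresponding coordinate of $h$ shows that the solutions form an affine hyperplane in $(\Z/p\Z)^d$, of cardinality exactly $p^{d-1}$. This gives the claimed count of lifts.

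There is no real obstacle: the only minor point to double-check is that the Taylor expansion in the displayed form is valid for an arbitrary polynomial $Q\in\Z[X_1,\ldots,X_d]$ with an integral remainder $R$, which follows by expanding $Q$ monomial by monomial and using the binomial theorem (the terms of degree $\geq 2$ in $p^kh$ contribute a factor of $p^{2k}$). Everything else is just the standard reduction of a Hensel-lifting step to counting solutions of one nontrivial linear equation over $\F_p^d$.
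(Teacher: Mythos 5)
Your proof is correct and follows essentially the same route as the paper's: parametrise the lifts as $y=x+p^k z$ with $z\in(\Z/p\Z)^d$, apply the Taylor expansion modulo $p^{k+1}$, and count the solutions of the resulting nontrivial affine equation over $\F_p^d$. Your version is if anything slightly more careful, in that it makes the integral remainder term and the inequality $2k\geq k+1$ explicit.
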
 
\begin{proof}
Let $y\in (\Z/p^{k+1}\Z)^d$ satisfy $x\equiv y\mod p^k$; in other words, $y=x+p^kz$ for some uniquely determined $z\in(\Z/p\Z)^d$.
Here we treat $x\in(\Z/p^k\Z)^d$ as an element of 
$(\Z/p^{k+1}\Z)^d$ by using the canonical injection.
We then treat $Q(x)$ as an element
of $\Z/p^{k+1}\Z$ congruent to 0 mod $p^k$ and put $Q(x)=p^ka$ with $a\in \Z/p\Z$.
Then Taylor's formula ensures that 
$$Q(y)\equiv Q(x)+p^k\grad Q(x)\cdot z
\equiv p^k(a+\grad Q(x)\cdot z)\mod p^{k+1}.$$
So $Q(y)\equiv 0 \mod p^{k+1}$ is equivalent
to $a+\grad Q(x)\cdot z\equiv 0\mod p$.
As $\grad Q(x)$ is not zero modulo $p$, this imposes a nontrivial affine equation on $z$ in the vector space $\F_p^d$,
so $z$ is constrained to lie in a $(d-1)$-dimensional
affine $\F_p$-subspace, which has $p^{d-1}$ elements, hence
the conclusion.
\end{proof}
As an application, we prove the following statement.
\begin{cor}\label{cor:localdens}
Let $\psi$ be an affine-linear form in $d$ variables, and let 
$p$ be a prime such that $\psi$ is not the trivial form modulo $p$. Then for any $m\geq 1$
$$
\alpha_m=\alpha_{\psi}(p^m)=
\E_{n\in(\Z/p^m\Z)^d}1_{p^m\mid\psi(n)}=\Pr_{n\in(\Z/p^m\Z)^d}(\psi(n)=0)\leq p^{-m}.
$$
\end{cor}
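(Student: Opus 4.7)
The plan is to induct on $m$, using Lemma~\ref{Hensel} applied to $Q = \psi$, which is admissible because the hypothesis that $\psi$ is not the trivial form modulo $p$ means precisely that $\grad \psi = (a_1, \ldots, a_d)$ (the vector of linear coefficients of $\psi$) is not the zero vector modulo $p$, and this vector is constant in $n$.

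The base case $m=1$ is a direct calculation. Writing $\psi(n) = a_1 n_1 + \cdots + a_d n_d + c$ and assuming without loss of generality that $p \nmid a_1$, the congruence $\psi(n) \equiv 0 \pmod p$ determines $n_1$ uniquely from $(n_2, \ldots, n_d) \in (\Z/p\Z)^{d-1}$, so there are exactly $p^{d-1}$ solutions in $(\Z/p\Z)^d$, giving $\alpha_1 = p^{-1}$.

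For the inductive step, let $N_m$ denote the number of solutions to $\psi(n) \equiv 0 \pmod{p^m}$ in $(\Z/p^m\Z)^d$. Every solution $y \in (\Z/p^{m+1}\Z)^d$ reduces modulo $p^m$ to a solution $x$ of $\psi(x) \equiv 0 \pmod{p^m}$, and conversely Lemma~\ref{Hensel}, applied at each such $x$ (the gradient condition holds uniformly since it depends only on the $a_i$ and not on $x$), guarantees that exactly $p^{d-1}$ preimages $y$ lift $x$ to a solution modulo $p^{m+1}$. Hence $N_{m+1} = p^{d-1} N_m$.

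Combining the base case and the induction gives $N_m = p^{m(d-1)}$, so $\alpha_m = N_m / p^{md} = p^{-m}$. In fact this yields equality in the stated bound. There is no real obstacle; the only thing to be careful about is checking that Lemma~\ref{Hensel} is applicable at every lift, which is automatic because the gradient of an affine-linear form is a constant vector, so the nontriviality of $\psi$ modulo $p$ is inherited by every $x$.
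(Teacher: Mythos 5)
There is a gap at the very first step: you assert that ``$\psi$ is not the trivial form modulo $p$'' means precisely that $\grad\psi$, the vector of linear coefficients, is nonzero modulo $p$. That is not what the hypothesis says. An affine-linear form is $\psi=\dot{\psi}+\psi(0)$, and ``not trivial modulo $p$'' only excludes the case where \emph{all} coefficients, including the constant term, vanish modulo $p$. It therefore allows the degenerate case $\dot{\psi}\equiv 0$, $\psi(0)\not\equiv 0\pmod p$ (e.g.\ $\psi(n)=pn_1+1$). In that case your base step fails (there is no $a_i$ with $p\nmid a_i$ to solve for), Lemma~\ref{Hensel} is inapplicable (the gradient vanishes modulo $p$), and your concluding claim of equality $\alpha_m=p^{-m}$ is false: the congruence $\psi(n)\equiv 0\pmod{p^m}$ has no solutions at all, so $\alpha_m=0$. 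This degenerate case is exactly why the statement is an inequality rather than an equality, and it does arise in the applications (see Remark~\ref{rmk:locdens} and Proposition~\ref{prop:locbounds}, where forms are rescaled and reduced modulo various primes).

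The fix is short and is what the paper does: first reduce modulo $p$ and split into two cases. If $\dot{\psi}\equiv 0$ but $\psi(0)\not\equiv 0\pmod p$, then $\ker\tilde{\psi}=\emptyset$ and $\alpha_m=0\leq p^{-m}$. Otherwise $\dot{\psi}\not\equiv 0\pmod p$, and from there your argument — base case counting $p^{d-1}$ points on an affine hyperplane of $\F_p^d$, then lifting by Lemma~\ref{Hensel} using the constancy of the gradient — is exactly the paper's induction and is correct.
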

\begin{rmk}
\label{rmk:locdens}
If $\psi=p^k\psi'$ and $\psi'$ is not the trivial form modulo $p$, this corollary provides for $m\geq k$ the bound
$\alpha_\psi(p^m)\leq p^{k-m}\ll p^{-m}$.
\end{rmk}
\begin{proof}
If $n\in\Zp{m}{d}$ satisfies $\psi(n)\equiv 0\mod p^m$, then in particular
$\tilde{\psi}(\tilde{n})\equiv 0\mod p$,
where $\tilde{\cdot}$ is the reduction modulo $p$,
which imposes that $\tilde{n}$ lies in $\ker\tilde{\psi}$
.
By assumption, $\tilde{\psi}\neq 0$. If
its linear part is 0, then its constant part is nonzero,
thus $\ker\tilde{\psi}=\emptyset$ and $\alpha_m=0$. Otherwise, the linear part is nonzero
modulo $p$, and then $\ker\tilde{\psi}$ is an affine $\F_p$-hyperplane, thus has $p^{d-1}$ elements. 
Let us prove the proposition by induction on $m$.
For $m=1$, we have just proved the result.
Suppose now that $\alpha_m\leq p^{-m}$ for some $m\geq 1$.
Because of the assumption above,
$\grad \psi$ is a constant vector which is nonzero modulo $p$. Applying Lemma \ref{Hensel} for $k=m$, 
we find that each zero modulo $p^m$ of $\psi$ gives rise to exactly $p^{d-1}$ zeros modulo $p^{m+1}$, which
proves that $\alpha_{m+1}\leq p^{-(m+1)}$. This concludes the induction step and the proof.
\end{proof}
Exploiting this corollary, we can now prove a bound on more general local densities.
\begin{prop}\label{prop:locbounds}
Let $\Psi=(\psi_1,\ldots,\psi_t)$ be a system of integral affine
linear forms in $d$ variables and $p$ be a prime so that the system reduced modulo $p$ is of finite complexity, i.e. no
two of the forms are affinely related modulo $p$. Then 
$$\alpha=\alpha_{\Psi}(p^{a_1},\ldots,p^{a_t})\leq p^{-\max_{i\neq j}(a_i+a_j)}.$$
\end{prop}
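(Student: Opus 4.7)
The plan is to reduce to a pair of forms and then decouple them by a linear change of variables on $(\Z/p^m\Z)^d$. First I would exploit monotonicity: letting $i_0,j_0$ be a pair of distinct indices realising $\max_{i\neq j}(a_i+a_j)$, dropping the constraints indexed by $k\notin\{i_0,j_0\}$ only enlarges the set of solutions, so
\[
\alpha_\Psi(p^{a_1},\ldots,p^{a_t})\leq \alpha_{(\psi_{i_0},\psi_{j_0})}(p^{a_{i_0}},p^{a_{j_0}}),
\]
where the right-hand side is taken over $(\Z/p^{\max(a_{i_0},a_{j_0})}\Z)^d$; this equality between expectations taken at different levels is just lift-invariance, since $1_{p^{a_{i_0}}\mid\psi_{i_0}(n)}1_{p^{a_{j_0}}\mid\psi_{j_0}(n)}$ depends only on $n$ modulo $p^{\max(a_{i_0},a_{j_0})}$. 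Thus it suffices to show
\[
\alpha_{(\psi_{i_0},\psi_{j_0})}(p^{a_{i_0}},p^{a_{j_0}})\leq p^{-(a_{i_0}+a_{j_0})}.
\]

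For the two-form case, I would use the non-proportionality of $\dot\psi_{i_0}$ and $\dot\psi_{j_0}$ modulo $p$: writing $\dot\psi_{i_0}=(a_1,\ldots,a_d)$ and $\dot\psi_{j_0}=(b_1,\ldots,b_d)$ mod $p$, there exist indices $k\neq l$ such that the $2\times 2$ minor $a_kb_l-a_lb_k$ is a unit in $\Z/p^M\Z$, where $M=\max(a_{i_0},a_{j_0})$. After relabelling coordinates assume $k=1$, $l=2$. Define
\[
\phi\colon (\Z/p^M\Z)^d\longrightarrow(\Z/p^M\Z)^d,\qquad n\mapsto(\psi_{i_0}(n),\psi_{j_0}(n),n_3,\ldots,n_d).
\]
Its linear part has determinant $a_1b_2-a_2b_1$, which is a unit in $\Z/p^M\Z$, so $\phi$ is a bijection.

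Under $\phi$, the conditions $p^{a_{i_0}}\mid\psi_{i_0}(n)$ and $p^{a_{j_0}}\mid\psi_{j_0}(n)$ become simply $p^{a_{i_0}}\mid m_1$ and $p^{a_{j_0}}\mid m_2$ respectively. The number of such $m\in(\Z/p^M\Z)^d$ is $p^{M-a_{i_0}}\cdot p^{M-a_{j_0}}\cdot p^{M(d-2)}=p^{Md-a_{i_0}-a_{j_0}}$, and dividing by $p^{Md}$ gives density exactly $p^{-(a_{i_0}+a_{j_0})}$, yielding the required bound upon taking the maximum over pairs.

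There is no real obstacle: the argument is entirely linear-algebraic, with the only subtlety being the reduction from $t$ forms to $2$ forms (handled by monotonicity and lift-invariance) and the verification that the decoupling map is a genuine bijection on $(\Z/p^M\Z)^d$ (which follows from the determinant being a $p$-adic unit, i.e., the hypothesis that the system has finite complexity modulo $p$).
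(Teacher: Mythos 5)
Your argument is correct, but it proves the proposition by a genuinely different route from the paper. The paper first disposes of the case where one exponent vanishes via Corollary~\ref{cor:localdens}, and otherwise runs a double induction powered by the multivariate Hensel lemma (Lemma~\ref{Hensel}): it shows $\Pr_{n\in\Zp{m}{d}}(\psi_i(n)\equiv\psi_j(n)\equiv 0\mod p^m)\leq p^{-2m}$ by lifting the intersection of two non-parallel affine hyperplanes of $\F_p^d$ one power of $p$ at a time, and then inducts again on $a_j-a_i$ to unbalance the two exponents. You instead reduce to the extremal pair by monotonicity and lift-invariance, and then decouple the two divisibility conditions in one stroke: non-proportionality of the linear parts modulo $p$ is equivalent to some $2\times 2$ minor of the $2\times d$ matrix of linear parts being a unit, so the affine map $n\mapsto(\psi_{i_0}(n),\psi_{j_0}(n),n_3,\ldots,n_d)$ has unit determinant and is a bijection of $(\Z/p^M\Z)^d$, after which the count is immediate. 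Your version is shorter, avoids Hensel entirely, and yields the exact value $p^{-(a_{i_0}+a_{j_0})}$ for the two-form density rather than an upper bound; the paper's Hensel-based machinery is the natural choice in context because the same Lemma~\ref{Hensel} also drives Corollary~\ref{cor:localdens} and Proposition~\ref{pcarre}, and is stated for general polynomials rather than just linear forms. The only blemishes in your write-up are cosmetic: the letters $a_1,\ldots,a_d$ for the coefficients of $\dot\psi_{i_0}$ clash with the exponents $a_1,\ldots,a_t$ of the statement, and you should note (as is immediate) that the degenerate cases $a_{i_0}a_{j_0}=0$ or $M=0$ are covered by the same computation.
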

\begin{proof}If all $a_i$ are zero, the result is trivial, so 
let $m=\max a_i$ and suppose $m\geq 1$; let $i<j$ be such that $a_i+a_j$ is maximal (in
particular, it is at least $m$).
Suppose first that either $a_i$ or $a_j$ is 0. Without
loss of generality, suppose $a_i=0$ and $a_j\neq 0$.
Then for $n\in (\Z/p^m\Z)^d$ to satisfy $\psi_k(n)\equiv 0\mod p^{a_k}$ for all $k=1,\ldots,t$, we must have in particular
$\tilde{\psi_j}(\tilde{n})\equiv 0\mod p^{a_j}$,
and using Corollary \ref{cor:localdens}, we find that 
$$\alpha=\E_{n\in(\Z/p^m\Z)^d}\prod_{i\in[t]}1_{p^{a_i}
\mid\phi_i(n)}\leq \E_{n\in(\Z/p^{a_j}\Z)^d}1_{p^{a_j}\mid
\phi_j(n)}=p^{-a_j}=p^{-\max_{i\neq j}(a_i+a_j)}.$$
Now suppose $1\leq a_i\leq a_j$.
Then for $n\in (\Z/p^m\Z)^d$ to satisfy $\psi_k(n)\equiv 0\mod p^{a_k}$ for all $k=1,\ldots,t$, we must have in particular
$\tilde{\psi_i}(\tilde{n})\equiv \tilde{\psi_j}(\tilde{n})\equiv 0\mod p$.
We can again suppose that $\tilde{\psi_j}$
as well as $\tilde{\psi_i}$ have linear parts which are not
0 modulo $p$, otherwise $\alpha=0$.
This imposes that $\tilde{n}$ lies in the intersection
of two affine $\F_p$-hyperplanes, namely $\ker\tilde{\psi_i}$
and $\ker\tilde{\psi_j}$ which are distinct because these forms are affinely independent modulo $p$. This intersection
is empty (and then $\alpha=0$) if and only if these hyperplanes are parallel (hence
the linear parts of $\psi_j$ and $\psi_i$ are proportional
modulo $p$). So let us suppose that the linear parts are not
proportional modulo $p$, which amounts to saying that the constant
vectors
$\grad\tilde{\psi_j},\grad\tilde{\psi_i}\in(\Z/p\Z)^d$
are not proportional.
Now we use induction on $m\geq 1$ to show that
$$
\beta_m=\Pr_{n\in\Zp{m}{d}}(\psi_i(n)\equiv \psi_j(n)\equiv 0\mod p^m)\leq p^{-2m}.
$$
For $m=1$, what we have seen above implies that $\beta_1=0$ or
$\beta_1=p^{-2}$ (the intersection of two nonparallel affine hyperplanes
of $\F_p^d$ is an affine subspace of dimension $d-2$, so its cardinality is $p^{d-2}$),
so the statement is true.
Suppose now that for some $m\geq 1$ we have $\beta_m\leq p^{-2m}$.
If $x\in\Zp{m}{d}$ satisfies $\psi_i(x)\equiv \psi_j(x)\equiv 0\mod p^m$
 and if
$y=x+p^mz\in (\Z/p^{m+1}\Z)^d$ for some $z\in(\Z/p\Z)^d$ satisfies
$\psi_i(y)\equiv\psi_j(y)\equiv 0\mod p^{m+1}$, then following
the proof of Lemma \ref{Hensel}, we infer that $z$ has to
satisfy two affine equations 
$$a+\grad\psi_i\cdot z\equiv 0\mod p\quad\text{ and }\quad a+\grad\psi_j\cdot z\equiv 0\mod p.$$
This forces $z$ to lie in the intersection of two nonparallel affine $\F_p$-hyperplanes of $\F_p^d$
(they are nonparallel because we supposed that the gradients were not proportional).
Hence for a fixed $x$ as above, there are $p^{d-2}$ such $y$,
so finally
$\beta_{m+1}=p^{d-2}\beta_m$ whence the conclusion.
In particular, putting $m=a_i$, we have that
$\E_{n\in\Zp{a_i}{d}}1_{\phi_i(n)\equiv\phi_j(n)\equiv 0\mod p^{a_i}}\leq p^{-2a_i}$.
It remains to induct on $a_j-a_i\geq 0$ using Lemma \ref{Hensel} in order to find that
$$\E_{n\in\Zp{a_j}{d}}1_{p^{a_i}\mid\phi_i(n)}1_{p^{a_j}\mid\phi_j(n)}\leq p^{-(a_i+a_j)},$$
which implies the desired result.
\end{proof}
We prove another statement which is helpful during the proof of the linear forms conditions (Appendix B).
\begin{prop}\label{pcarre}
Let $\Phi :\Z^d\rightarrow\Z^t$ be a system of affine-linear forms. Let $p$ be a prime such that
the reduction modulo $p$ of the system is of finite complexity.
Let $K\subset [-B,B]^d$ be a convex body. Then
$$\sum_{n\in K\cap\Z^d}1_{p^2\mid\prod_{i\in[t]}\phi_i(n)}\ll_t p^{-2}\Vol(K)+B^{d-1}p^2.$$
\end{prop}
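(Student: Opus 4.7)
The plan is to decompose the indicator $1_{p^2 \mid \prod_{i \in [t]} \phi_i(n)}$ according to how the factor $p^2$ can arise from the product. Either $p^2$ already divides a single $\phi_i(n)$, or $p$ divides two distinct factors $\phi_i(n)$ and $\phi_j(n)$. This gives the pointwise inequality
$$
1_{p^2 \mid \prod_{i \in [t]} \phi_i(n)} \leq \sum_{i \in [t]} 1_{p^2 \mid \phi_i(n)} + \sum_{1 \leq i < j \leq t} 1_{p \mid \phi_i(n)}\, 1_{p \mid \phi_j(n)}.
$$

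I would then sum each of the $O_t(1)$ terms over $n \in K \cap \Z^d$ using Lemma~\ref{lm:localdens}. For a term of the first type, the relevant single-form ``system'' has lcm of divisors equal to $p^2$, so
$$
\sum_{n \in K \cap \Z^d} 1_{p^2 \mid \phi_i(n)} = \Vol(K)\, \alpha_{\phi_i}(p^2) + O(B^{d-1} p^2),
$$
and since the reduction of $\phi_i$ modulo $p$ is nontrivial (as the full system has finite complexity mod $p$, so no form is identically zero modulo $p$), Corollary~\ref{cor:localdens} yields $\alpha_{\phi_i}(p^2) \leq p^{-2}$.

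For a term of the second type, apply Lemma~\ref{lm:localdens} to the two-form system $(\phi_i, \phi_j)$ with divisors $(p,p)$, whose lcm is $p$:
$$
\sum_{n \in K \cap \Z^d} 1_{p \mid \phi_i(n)}\, 1_{p \mid \phi_j(n)} = \Vol(K)\, \alpha_{(\phi_i, \phi_j)}(p, p) + O(B^{d-1} p).
$$
Because $(\phi_i, \phi_j)$ remains of finite complexity modulo $p$ (a subsystem of a finite-complexity one), Proposition~\ref{prop:locbounds} applies with $a_i = a_j = 1$ and gives $\alpha_{(\phi_i, \phi_j)}(p, p) \leq p^{-2}$. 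Adding the $O_t(1)$ contributions produces
$$
\sum_{n \in K \cap \Z^d} 1_{p^2 \mid \prod_i \phi_i(n)} \ll_t p^{-2} \Vol(K) + B^{d-1} p^2,
$$
as required.

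There is no serious obstacle here: the argument is a direct combination of the local density bounds developed in Corollary~\ref{cor:localdens} and Proposition~\ref{prop:locbounds} with the volume-packing Lemma~\ref{lm:localdens}. The only point deserving attention is that the finite-complexity hypothesis modulo $p$ must be invoked precisely to ensure that any two distinct forms $\phi_i, \phi_j$ remain affinely independent modulo $p$, which is exactly the input needed for the $p^{-2}$ bound on $\alpha_{(\phi_i,\phi_j)}(p,p)$.
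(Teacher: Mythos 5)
Your proof is correct and follows essentially the same route as the paper: the same decomposition into the events $p^2\mid\phi_i(n)$ and $p\mid\phi_i(n),\,p\mid\phi_j(n)$, followed by Lemma~\ref{lm:localdens} and the local density bounds. The only cosmetic difference is that you invoke Corollary~\ref{cor:localdens} for the single-form term where the paper cites Proposition~\ref{prop:locbounds}; the two give the same bound there.
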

\begin{proof}
First, we remark that $p^2\mid\prod_{i\in[t]}\phi_i(n)$ implies that either
there exists $i\in[t]$ such that $p^2\mid\phi_i(n)$ or there exist $i\neq j$ such that
$p\mid\phi_i(n)$ and $p\mid\phi_j(n)$.
Hence
$$
\sum_{n\in K\cap\Z^d}1_{p^2\mid\prod_{i\in[t]}\phi_i(n)}\leq \sum_{i\in[t]}
\sum_{n\in K\cap\Z^d}1_{p^2\mid\phi_i(n)}
+\sum_{i\neq j}\sum_{n\in K\cap\Z^d}1_{\phi_i(n)\equiv\phi_j(n)\equiv 0\mod p}.
$$
Now for any $i\in[t]$ we
apply Lemma \ref{lm:localdens} which implies
$$\sum_{n\in K\cap\Z^d}1_{p^2\mid\phi_i(n)}=\Vol (K)\alpha_{\phi_i}(p^2)+O(B^{d-1}p^2)$$
and for any $i\neq j$
$$\sum_{n\in K\cap\Z^d}1_{\phi_i(n)\equiv\phi_j(n)\equiv 0\mod p}=\Vol(K)\alpha_{\phi_i,\phi_j}(p,p)+O(B^{d-1}p).$$
But the assumption of finite complexity modulo $p$ means that we
may invoke Proposition \ref{prop:locbounds}, which implies 
that $\alpha_{\phi_i}(p^2)\leq p^{-2}$ and that $\alpha_{\phi_i,\phi_j}(p,p)\leq p^{-2}$.
The result then follows.
\end{proof}

\end{document}